\documentclass[a4paper]{article}
\usepackage{url}
\usepackage{amsmath,amsfonts,amssymb,amsthm,graphicx,enumerate,mathrsfs,color,subfigure,multicol,mathdots,booktabs,xypic}
\usepackage[noend]{algpseudocode}
\usepackage{algorithmicx,algorithm}
\newtheorem{definition}{Definition}
\newtheorem{remark}{Remark}
\newtheorem{theorem}{Theorem}
\newtheorem{corollary}{Corollary}
\newtheorem{lemma}{Lemma}

\newcommand{\ten}[1]{\mathcal{#1}}
\newcommand{\tens}[1]{\mathcal{#1}}

\newcommand{\bcirc}{{\rm{bcirc}}}
\newcommand{\unfold}{{\rm{unfold}}}
\newcommand{\fold}{{\rm{fold}}}
\newcommand{\ind}{{\rm{Ind}}}
\newcommand{\rank}{{\rm{rank}}}
\textwidth 160mm \textheight 240mm \oddsidemargin 0mm
\evensidemargin -10mm \headsep 0.0cm

\newcommand{\norm}[1]{\left\lVert#1\right\rVert}
\begin{document} \large
\title{T-Jordan Canonical Form and T-Drazin Inverse based on the T-Product}
\author{Yun Miao\footnote{E-mail: 15110180014@fudan.edu.cn. School of Mathematical Sciences, Fudan University, Shanghai, 200433, P. R. of China. Y. Miao is supported by the National Natural Science Foundation of China under grant 11771099. } \quad  Liqun Qi\footnote{ E-mail: maqilq@polyu.edu.hk. Department of Applied Mathematics, the Hong Kong Polytechnic University, Hong Kong. L. Qi is supported by the Hong Kong Research Grant Council (Grant No. PolyU 15302114, 15300715, 15301716 and 15300717)} \quad  Yimin Wei\footnote{Corresponding author. E-mail: ymwei@fudan.edu.cn and yimin.wei@gmail.com. School of Mathematical Sciences and Shanghai Key Laboratory of Contemporary
Applied Mathematics, Fudan University, Shanghai, 200433, P. R. of China. Y. Wei is supported by the Innovation Program of Shanghai Municipal Education Commission.
}}
\maketitle

\begin{abstract}
In this paper, we investigate the tensor similar relationship and propose the T-Jordan canonical form and its properties. The concept of T-minimal polynomial and T-characteristic polynomial are proposed. As a special case, we present properties when two tensors commutes based on the tensor T-product. We prove that the Cayley-Hamilton theorem also holds for tensor cases. Then we focus on the tensor decompositions:  T-polar, T-LU, T-QR and T-Schur decompositions of tensors are obtained. When a F-square tensor is not invertible with the T-product, we study the T-group inverse and T-Drazin inverse which can be viewed as the extension of matrix cases. The expression of T-group and T-Drazin inverse are given by the T-Jordan canonical form. The polynomial form of T-Drazin inverse is also proposed.  In the last part, we give the T-core-nilpotent decomposition and show that the T-index and T-Drazin inverse can be given by a limiting formula.
\bigskip

  \hspace{-14pt}{\bf Keywords.} T-Jordan canonical form, T-function, T-index, tensor decomposition, T-Drazin inverse, T-group inverse, T-core-nilpotent decomposition.

  \bigskip

  \hspace{-14pt}{\bf AMS Subject Classifications.} 15A48, 15A69, 65F10, 65H10, 65N22.

\end{abstract}

\newpage

\section{Introduction}
The Jordan canonical form of a linear operator or a square matrix has wide applications \cite{Horn1}. It gives a classification of matrices based on the similar relation which is the main equivalent relation in the matrix theory. However, this kind of equivalence  can not be extended to tensors because the multiplication between tensors are not well-defined. On the other hand, the Jordan canonical form is always used to define matrix functions and give the expressions of Drazin inverse for a  singular matrix. The group inverse \cite{Ben1, 2Campbell1} and the Drazin inverse \cite{2Drazin1, 2Wei1} are two  generalized inverses that people usually use and research in matrix theories.  The Drazin inverse is proved to be useful in Markov chain, matrix differential equations and matrix linear systems etc. \cite{2Campbell1}.
\par
There are two important kinds of products between tensors, which is the tensor Einstein product and the tensor T-product. Both the tensor Moore-Penrose inverse and Drazin inverse have been established for Einstein product by Jin, Bai, Bentez and Liu \cite{2Jin1} and Sun, Zheng, Bu and Wei \cite{2Sun1}, respectively.

The perturbation theory for Moore-Penrose inverse of tensor via Einstein product was presented in \cite{Ma}. The outer inverse, core inverse and core-EP inverse of tensor based on the Einstein product have been investigated by Predrag et al. \cite{Sahoo,Predragouter}.
Recently, the Bernstein concentration inequality has also been proposed for tensors with the Einstein product by Luo, Qi and Toint \cite{2Luo1}.\par
On the other hand, the tensor T-product introduced by Kilmer \cite{Kilmer1} has been proved to be of great use in many areas, such as image processing \cite{Kilmer1, Martin1, Soltani1, Tarzanagh1}, computer vision \cite{Hao1}, signal processing \cite{Chan2, Liu1, Long1}, low rank tensor recovery and robust tensor PCA \cite{Kong2,Liu1},  data completion and denoising \cite{Hu2, Long1,Wang1}. An approach of linearization is provided by the T-product to transfer tensor multiplication to matrix multiplication by the discrete Fourier transformation and the theories of block circulant matrices \cite{Chan1, Jin1}. Due to the importance of the tensor T-product, Lund \cite{Lund1} gave the definition of tensor functions based on the T-product of third-order F-square tensors in her Ph.D thesis in  2018. The definition of T-function is given by
$$
f^{\Diamond}(\tens{A})=\fold(f(\bcirc(\tens{A}))\widehat{E_1}^{np\times n}),
$$
where `$\bcirc(\tens{A})$' is the block circulant matrix \cite{Jin1} generated by the F-square tensor $\tens{A}\in \mathbb{C}^{n\times n\times p}$. The T-function is also proved to be useful in stable tensor neural networks for rapid deep learning \cite{2Newman}. Special kinds of T-function such as tensor power has been used by Gleich, Chen and Varah \cite{Gleich1} in Arnoldi methods to compute the eigenvalues of tensors and diagonal tensor canonical form was also proposed by them. It is worth mention that, the tensor T-product and T-function is usually defined for third order tensors. For general cases, i.e., if we have a tensor $\mathbb{R}^{n \times n\times n_3\times \cdots\times n_m}$, we can combine the subscripts $n_3,n_4\ldots,n_m$ to one subscript $p$, then the problem will be changed to third order tensor cases \cite{Martin1}. \par

In this paper, we dedicate to research the F-square tensors and its properties. The organization is as follows. Firstly, the tensor T-similar relationship and the T-Jordan canonical form based on the T-product and their properties are introduced. We find the tensor T-Jordan canonical form is an upper-bi F-diagonal tensor whose off-F-diagonal entries are the linear combinations of $0$ and $1$ with coefficients $\omega^t$ divided by $p$, where $\omega={\textbf e}^{-2\pi {\textbf i}/p}$, ${\textbf i}=\sqrt{-1}, t=1,2,\ldots, p$, and $p$ is the number of frontal slices of the tensor. Based on the T-Jordan canonical form, we give the definition of F-Square tensor polynomial, tensor power series and their convergence radius. As an application, we present several power series of classical tensor T-functions and their convergence radius. Then we propose the T-minimal polynomial and T-characteristic polynomial which can be viewed as a special kind of standard tensor function. Cayley-Hamilton theorem also holds for tensors. As a special case, we discuss the properties when two tensors commutes with the tensor T-product. That is, when two tensors commutes, they can be F-diagonalized by the same invertible tensor. By induction, a family of commutative tensors also hold this property. We also find that normal tensors can be F-diagonalized by unitary tensors. Then we focus on the tensor decomposition. We give the definition of T-positive definite tensors. The T-polar, T-LU, T-QR and T-Schur decompositions of tensors are obtained.\par

Unfortunately, when  the original scalar function is the inverse function, the induced tensor T-function will not be well-defined to irreversible tensors. In the second part of the main results, we discuss the generalized inverse when a F-square tensor is not invertible. The T-group inverse and T-Drazin inverse which can be viewed as the extension of matrix cases are proposed, including their properties and constructions. We give the definition of tensor T-index. The relationship between the tensor T-index and the tensor T-minimal polynomial is obtained. The existence and uniqueness of T-group inverse are also proposed. The expression of T-group inverse according to the T-Jordan canonical form is also obtained. Then we focus on the T-Drazin inverse which can be viewed as the generalization of T-group inverse when the T-index of a tensor is known. Based on the T-Jordan canonical form, we obtain the expression of T-Drazin inverse. We find the T-Drazin inverse preserves similarity relationship between tensors.
The T-Drazin inverse can be also given by the polynomial of the tensor. In the last part, we give the T-core-nilpotent decomposition of tensors and show that the tensor T-index and T-Drazin inverse can be given by a limited process.

\section{Preliminaries}
\subsection{Notation and index}
A new concept is proposed for multiplying third-order tensors, based on viewing a tensor as a stack of frontal slices. Suppose we have two tensors $\tens{A}\in \mathbb{R}^{m\times n \times p}$ and $\tens{B} \in \mathbb{R}^{n\times s \times p}$ and we denote their frontal faces respectively as $A^{(k)}\in \mathbb{R}^{m\times n}$ and $B^{(k)}\in \mathbb{R}^{n\times s}$, $k=1,2,\ldots, p$. The operations $\bcirc$, $\unfold$ and $\fold$ are defined as follows \cite{Hao1,Kilmer1, Kilmer2}:
$$
\bcirc(\tens{A}):=
\begin{bmatrix}
 A^{(1)} &  A^{(p)}  &  A^{(p-1)} & \cdots &  A^{(2)}\\

 A^{(2)} &  A^{(1)}  &  A^{(p)} & \cdots &  A^{(3)}\\

\vdots  & \ddots& \ddots & \ddots & \vdots\\

 A^{(p)} &  A^{(p-1)}  &  \ddots & A^{(2)} &  A^{(1)}\\
\end{bmatrix},\
\unfold (\tens{A}):=
\begin{bmatrix}
A^{(1)}\\

A^{(2)}\\

\vdots\\

 A^{(p)}\\
\end{bmatrix},
$$
and $\fold (\unfold(\tens{A})):=\tens{A}$, which means `$\fold$' is the inverse operator of $\unfold$. We can also define the corresponding inverse operation $\bcirc^{-1}: \mathbb{R}^{mp\times np}\rightarrow \mathbb{R}^{m\times n \times p}$ such that $\bcirc^{-1}(\bcirc({\tens{A}}))=\tens{A}$.\par

\subsection{The tensor T-Product}
The tensor T-product change problems to block circulant matrices which could be block diagonalizable by the fast Fourier transformation \cite{Chan1,Gleich1}. The calculation of T-product and T-SVD can be done fast  and stable because of the following reasons. First, the block circulant operator `bcirc' is only related to the structure of data, which can be constructed in a convenient way. Then the Fast Fourier Transformation and its inverse can be implemented stably  and efficiently. Its algorithm has been fully established. After transform the block circulant matrix into block diagonal matrix, functions can be done to each matrices.\par
On the other hand, the tensor T-product and T-functions does have applications in many scientific situations. For example, it can be used in conventional computed
tomography. Semerci, Hao, Kilmer and Miller \cite{Semerci1} introduced the tensor-based formulation and used the ADMM algorithm to solve the TNN model.
They give the quadratic approximation to the Poisson log-likelihood function for $k^{th}$ energy bin as a third order tensor, whose $k^{th}$ frontal slice is given by
$$
L_{k}({\textbf x}_{k})=({\textbf A}{\textbf x}_{k}-{\textbf m}_{k})^{\top}\Sigma_k^{-1}({\textbf A}{\textbf x}_{k}-{\textbf m}_{k}),\quad k=1,2,\ldots, p.
$$
where $\Sigma_k$ is treated as the weighting matrix. To minimize the objective function $L_{k}({\textbf x}_{k})$, it comes to solve the least squares problem, or equivalently, to obtain its T-generalized inverse, i.e., a special case of our generalized functions based on the T-product. They used the T-SVD and compute the T-least squares solution. Kilmer and Martin \cite{Kilmer2} also gave the definition of the standard inverse of tensors based on the T-product.\par

The following definitions and properties are introduced in \cite{Hao1,Kilmer1, Kilmer2}.
\begin{definition}\label{def2-1} {\rm (T-product)}
Let $\tens{A}\in \mathbb{R}^{m\times n \times p}$ and $\tens{B}\in \mathbb{R}^{n\times s \times p}$ be two real tensors. Then the T-product
$\tens{A}*\tens{B}$ is an $m\times s \times p$ real tensor defined by
$$
\tens{A}*\tens{B}:=\fold (\bcirc(\tens{A})\unfold(\tens{B})).
$$
\end{definition}
We introduce definitions of transpose, identity and orthogonal of tensors as follows.
\begin{definition}\label{def2-2} {\rm(Transpose and conjugate transpose)}
If $\tens{A}$ is a third order tensor of size $m\times n\times p $, then the transpose $\tens{A}^{\top}$ is obtained by transposing each of the frontal slices and then reversing the ordering of transposed frontal slices $2$ through $n$. The conjugate transpose $\tens{A}^{H}$ is obtained by conjugate transposing each of the frontal slices and then reversing the order of transposed frontal slices $2$ through $n$.
\end{definition}
\begin{definition}\label{def2-3} {\rm (Identity tensor)}
The $n\times n \times p $ identity tensor $\tens{I}_{nnp}$ is the tensor whose first frontal slice is the $n\times n$ identity matrix, and whose other frontal slices are all zeros.
\end{definition}
It is easy to check that $\tens{A}*\tens{I}_{nnp}=\tens{I}_{mmp}*\tens{A}=\tens{A}$ for $\tens{A}\in \mathbb{R}^{m\times n\times p}$.
\begin{definition}\label{def2-4} {\rm (Orthogonal and unitary tensor)}
An $n\times n\times p$ real-valued tensor $\tens{P}$ is orthogonal if $\tens{P}^{\top}*\tens{P}=\tens{P}*\tens{P}^{\top}=\tens{I}$.  An $n\times n\times p$ complex-valued tensor $\tens{Q}$ is unitary if $\tens{Q}^{H}*\tens{Q}=\tens{Q}*\tens{Q}^{H}=\tens{I}$.
\end{definition}
For a frontal square tensor $\tens{A}$ of size $n\times n \times p$, it has inverse tensor $\tens{B}$ $(=\tens{A}^{-1})$, provided that
$$
\tens{A}*\tens{B}=\tens{I}_{nnp}\ \ and \ \ \tens{B}*\tens{A}=\tens{I}_{nnp}.
$$
It should be noticed that invertible third order tensors of size $n\times n\times p$ form a group, since the invertibility of tensor $\ten{A}$ is equivalent to the invertibility of the matrix $\bcirc(\tens{A})$, and the set of invertible matrices form a group. The orthogonal tensors based on the tensor T-product also forms a group, since $\bcirc(\tens{Q})$ is an orthogonal matrix. \par
The concepts of T-range space, T-null space, tensor norm, and T-Moore-Penrose inverse are given as follows \cite{2Miao1}.
\begin{definition}\label{def2-5}
Let $\tens{A}$ be an $n_1\times n_2 \times n_3$ real-valued tensor.\\
{\rm (1)} \ The T-range space of $\tens{A}$, $\tens{R}(\tens{A}):= {\rm Ran}((F_p^H\otimes I_{n_1})\bcirc(\tens{A})(F_p\otimes I_{n_1}))$, `${\rm Ran}$' means the range space,\\
{\rm (2)} \ The T-null space of $\tens{A}$, $\tens{N}(\tens{A}):={\rm Null}((F_p^H\otimes I_{n_1})\bcirc(\tens{A})(F_p\otimes I_{n_1}))$, `${\rm Null}$' represents the null space,\\
{\rm (3)} \ The tensor unitary invariant norm $\norm{\tens{A}}:=\norm{\bcirc(\tens{A})}$, where the matrix norm $\norm{\cdot}$ should also be chosen as a unitary invariant norm.\\
{\rm (4)} \ The T-Moore-Penrose inverse \cite{2Miao1} $\tens{A}^{\dag}=\bcirc^{-1}((\bcirc(\tens{A}))^{\dag})$.
\end{definition}
\subsection{Tensor T-Function}
In this section, we recall for the functions of square matrices based on the Jordan canonical form \cite{Golub1,Higham1}. \par
Let $A\in \mathbb{C}^{n\times n}$ be a matrix with spectrum $\lambda(A):=\{\lambda_j\}_{j=1}^N$, where $N\leq n$ and $\lambda_j$ are distinct. Each $m\times m$ Jordan block $J_m(\lambda)$ of an eigenvalue $\lambda$ has the form
$$
J_m(\lambda)=
\begin{bmatrix}
\lambda & 1 &  &  \\

  & \lambda & \ddots  & \\

  & & \ddots &1\\

 & &  & \lambda
\end{bmatrix}\in \mathbb{C}^{m\times m}.
$$
Suppose that $A$ has the Jordan canonical form
$$
A=XJX^{-1}=X {\rm diag}(J_{m_1}(\lambda_{j_1}),\cdots, J_{m_p}(\lambda_{j_p}))X^{-1},
$$
with $p$ blocks of sizes $m_i$ such that $\sum_{i=1}^{p}m_i=n$, and the eigenvalues $\{\lambda_{j_k}\}_{k=1}^{p}\in {\rm spec}(A)$. \par
\begin{definition}\label{def2-6} {\rm (Matrix function)}
Suppose that $A\in \mathbb{C}^{n\times n}$ has the Jordan canonical form and the matrix function is defined as
$$
f(A):=X f(J)X^{-1},
$$
where $f(J):={\rm diag}(f(J_{m_1}(\lambda_{j_1})),\cdots, f(J_{m_p}(\lambda_{j_p})))$, and
$$
f(J_{m_i}(\lambda_{j_i})):=
\begin{bmatrix}
f(\lambda_{j_k}) & f'(\lambda_{j_k}) & \frac{f''(\lambda_{j_k})}{2!} & \cdots & \frac{f^{(n_{j_k}-1)}(\lambda_{j_k})}{(n_{j_k}-1)!} \\

0 & f(\lambda_{j_k}) & f'(\lambda_{j_k}) &  \cdots & \vdots \\

\vdots & \ddots & \ddots &\ddots & \frac{f''(\lambda_{j_k})}{2!} \\

\vdots &   & \ddots & \ddots& f'(\lambda_{j_k}) \\

0 & \cdots & \cdots & 0 & f(\lambda_{j_k})\\
\end{bmatrix}\in \mathbb{C}^{m_i\times m_i}.
$$
\end{definition}
There are various properties of matrix functions throughout matrix analysis. Here we give some of these properties and the proofs that could be found in the monograph \cite{Higham1}.
\begin{lemma}\label{lem2-1}
Assume that $A$ is a complex matrix of size $n \times n$ and $f$ is a function defined on the spectrum of $A$. Then we have \\
${\rm (1)}\  f(A)A=Af(A)$,\\
${\rm (2)}\  f(A^{H})=f(A)^{H}$,\\
${\rm (3)}\  f(XAX^{-1})=Xf(A)X^{-1}$,\\
${\rm (4)}\  f(\lambda)\in {\rm spec}(f(A))$ for all $\lambda \in {\rm spec}(A) $, where `${\rm spec}$' means the spectrum of a matrix.
\end{lemma}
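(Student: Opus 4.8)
The plan is to show that $f(A)$ equals $p(A)$ for a single polynomial $p$ depending only on $A$ (its distinct eigenvalues and the sizes of its largest Jordan blocks), and then to read off (1)--(4) with very little extra work. Write $\lambda(A)=\{\lambda_j\}_{j=1}^{N}$ for the distinct eigenvalues and let $n_j$ be the size of the largest Jordan block of $A$ belonging to $\lambda_j$; ``$f$ is defined on the spectrum of $A$'' means precisely that the numbers $f^{(\ell)}(\lambda_j)$, $0\le\ell\le n_j-1$, exist. Let $p$ be the Hermite interpolating polynomial pinned down by $p^{(\ell)}(\lambda_j)=f^{(\ell)}(\lambda_j)$ for all such $\ell$ and $j$. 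To see $f(A)=p(A)$, I would use the elementary identity: writing a Jordan block as $J_m(\lambda)=\lambda I_m+N$ with $N$ the nilpotent superdiagonal shift, the (finite) Taylor expansion of any polynomial $g$ about $\lambda$ gives
$$
g(J_m(\lambda))=\sum_{k=0}^{m-1}\frac{g^{(k)}(\lambda)}{k!}\,N^{k},
$$
which is exactly the upper triangular Toeplitz matrix appearing in Definition~\ref{def2-6}. Since $m\le n_j$ for every Jordan block at $\lambda_j$ and $p$ matches $f$ and its first $n_j-1$ derivatives there, $p(J_{m_i}(\lambda_{j_i}))=f(J_{m_i}(\lambda_{j_i}))$ for each block, so $p(J)=f(J)$ and hence $p(A)=Xp(J)X^{-1}=Xf(J)X^{-1}=f(A)$. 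As a bonus, since $p$ does not involve $X$, this also shows $f(A)$ is independent of the particular Jordan decomposition used in Definition~\ref{def2-6}.

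With the representation $f(A)=p(A)$ in hand, (1), (3) and (4) are immediate. For (1), $Af(A)=Ap(A)=p(A)A=f(A)A$ because $A$ commutes with each of its powers. For (3), the matrix $XAX^{-1}$ has the same distinct eigenvalues and the same indices $n_j$ as $A$ (Jordan structure is a similarity invariant), so the same polynomial $p$ represents $f$ on its spectrum; therefore $f(XAX^{-1})=p(XAX^{-1})=Xp(A)X^{-1}=Xf(A)X^{-1}$. For (4), $f(A)=Xf(J)X^{-1}$ is similar to the block upper triangular matrix $f(J)$, whose eigenvalues are its diagonal entries $f(\lambda_{j_1}),\ldots,f(\lambda_{j_p})$; since every $\lambda\in{\rm spec}(A)$ appears among the $\lambda_{j_k}$, we get $f(\lambda)\in{\rm spec}(f(A))$.

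The remaining identity (2) is where I expect the real work, because it requires locating a Jordan decomposition of $A^{H}$. From $A=XJX^{-1}$ we get $A^{H}=X^{-H}J^{H}X^{H}$, and blockwise $(J_m(\lambda))^{H}=P_m\,J_m(\bar\lambda)\,P_m$, where $P_m$ is the reversal permutation matrix ($P_m=P_m^{H}=P_m^{-1}$); assembling these, $J^{H}=\widetilde P\,\widetilde J\,\widetilde P$ with $\widetilde J={\rm diag}\big(J_{m_i}(\bar\lambda_{j_i})\big)$ and $\widetilde P={\rm diag}(P_{m_i})$, so that $A^{H}=(X^{-H}\widetilde P)\,\widetilde J\,(X^{-H}\widetilde P)^{-1}$ is a Jordan decomposition of $A^{H}$ with eigenvalues $\bar\lambda_{j_i}$ and unchanged block sizes. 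Applying Definition~\ref{def2-6} to this decomposition and comparing with $f(A)^{H}=X^{-H}f(J)^{H}X^{H}$ --- noting that conjugate-transposing the upper triangular Toeplitz block $f(J_m(\lambda))$ sends its entries $f^{(k)}(\lambda)/k!$ to $\overline{f^{(k)}(\lambda)}/k!$ on the subdiagonals, which $P_m$ then flips back into the upper triangle --- reduces $f(A^{H})=f(A)^{H}$ to the scalar compatibility $f^{(k)}(\bar\lambda)=\overline{f^{(k)}(\lambda)}$ at the eigenvalues. This holds for every function $f$ considered in the paper (all arise from real power series, in particular the inverse, exponential, logarithm and root functions), which settles (2). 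This bookkeeping with the reversal permutation and the conjugation is the main obstacle; everything else is formal once $f(A)=p(A)$ is established.
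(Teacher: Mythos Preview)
The paper does not supply its own proof of Lemma~\ref{lem2-1}; it defers entirely to Higham's monograph \cite{Higham1}. Your route via the Hermite interpolating polynomial, giving $f(A)=p(A)$ and then reading off (1), (3), (4), is precisely the standard argument found there (Higham, Theorems~1.12--1.13), so on those parts there is nothing to compare and your proof is correct.

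On (2) you have correctly identified a genuine issue: the identity $f(A^{H})=f(A)^{H}$ does not follow from the stated hypothesis alone, since your reduction to the scalar condition $f^{(k)}(\bar\lambda)=\overline{f^{(k)}(\lambda)}$ is sharp and this fails for general $f$ (take $f(z)=iz$ and $A=I$). Higham in fact states the unconditional transpose version $f(A^{\top})=f(A)^{\top}$; the conjugate-transpose version written in the paper requires the extra conjugation symmetry you supply, which is satisfied by real-coefficient power series and hence by every concrete $f$ the paper actually uses. Your handling of (2) via the reversal permutations $P_m$ and the Jordan form of $A^{H}$ is correct under that additional hypothesis, so your proof stands --- with the caveat that you are proving a correctly qualified statement that is slightly more careful than what the paper literally asserts.
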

By using the concept of T-product, the matrix function can be generalized to tensors of size $n\times n \times p$. Suppose we have tensors $\tens{A}\in \mathbb{C}^{n\times n\times p}$ and $\tens{B}\in \mathbb{C}^{n\times s\times p}$, then the tensor T-function of $\tens{A}$ is defined by \cite{Lund1}:
$$
f(\tens{A})*\tens{B}:=\fold(f(\bcirc(\tens{A}))\cdot \unfold(\tens{B})),
$$
or equivalently
$$
f(\tens{A}):=\fold(f(\bcirc(\tens{A}))\widehat{E_{1}}^{np\times n}),
$$
where $\widehat{E_{1}}^{np\times n}=\hat{e}_k^p \otimes I_{n\times n}$, $\hat{e}_k^p\in \mathbb{C}^p$ is the vector of all zeros except for the $k$th entry and $I_{n\times n}$ is the identity matrix, `$\otimes$' is the matrix Kronecker product \cite{Horn1}. \par

There is another way to express $\widehat{E_{1}}^{np\times n}$:
$$
\widehat{E_{1}}^{np\times n}=
\begin{bmatrix}
I_{n\times n}\\
0\\
\vdots\\
0
\end{bmatrix}
=
\begin{bmatrix}
1\\
0\\
\vdots\\
0
\end{bmatrix}\otimes I_{n\times n }=\unfold (\tens{I}_{n\times n \times p}).
$$
Note that $f$ on the right-hand side of the equation is merely the matrix function defined above, so the tensor T-function is well-defined.\par
From this definition, we can see that for a tensor $\mathbb{\tens{A}}\in \mathbb{C}^{n\times n\times p}$, $\bcirc(\tens{A})$ is a block circulant matrix of size $np\times np$. The frontal faces of $\tens{A}$ are the block entries of $A\widehat{E_{1}}^{np\times n}$, then $\tens{A}=\fold(A\widehat{E_{1}}^{np\times n})$, where $A=\unfold(\tens{A})$.\par
In order to get further properties of generalized tensor functions, we make some review of the results on block circulant matrices and the tensor T-product.
\begin{lemma}\label{lem2-2} {\rm\cite{Chan1}}
Suppose $A, B\in \mathbb{C}^{np\times np}$ are block circulant matrices with $n\times n$ blocks. Let $\{\alpha_j\}_{j=1}^k$ be scalars. Then $A^{\top}$, $A^{H}$, $\alpha_1 A+\alpha_2 B$, $AB$, $q(A)=\sum_{j=1}^k \alpha_j A^j$ and $A^{-1}$ are also block circulant matrices.
\end{lemma}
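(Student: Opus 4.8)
The plan is to isolate one structural description of block circulant matrices and then test each listed operation against it. Write $Z_p\in\mathbb{C}^{p\times p}$ for the cyclic shift permutation (so that $\bcirc(\tens{A})=\sum_{j=0}^{p-1}Z_p^{\,j}\otimes A^{(j+1)}$ for every $\tens{A}\in\mathbb{C}^{n\times n\times p}$) and put $\Pi:=Z_p\otimes I_n$. The first step is to prove the elementary equivalence, for $M\in\mathbb{C}^{np\times np}$ viewed in $n\times n$ blocks: (i) $M$ is block circulant; (ii) $\Pi M=M\Pi$; (iii) $(F_p^{H}\otimes I_n)\,M\,(F_p\otimes I_n)$ is block diagonal with $n\times n$ blocks. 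Here (i)$\Rightarrow$(ii) is the identity $(Z_p^{\,j}\otimes C)(Z_p\otimes I_n)=Z_p^{\,j+1}\otimes C=(Z_p\otimes I_n)(Z_p^{\,j}\otimes C)$ summed over $j$; for (ii)$\Rightarrow$(iii) one uses that $F_p$ diagonalises $Z_p$, so $F_p^{H}\otimes I_n$ carries $\Pi$ to $\Lambda\otimes I_n$ with $\Lambda$ diagonal having $p$ distinct entries, and a block matrix commutes with $\Lambda\otimes I_n$ exactly when its off-diagonal blocks vanish; and (iii)$\Rightarrow$(i) follows because the Vandermonde matrix in the entries of $\Lambda$ is invertible, so every block diagonal matrix is of the form $\sum_j \Lambda^{\,j}\otimes C_j$, which pulls back to $\sum_j Z_p^{\,j}\otimes C_j$. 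I would record this as a short preliminary lemma.

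Granting the characterisation (ii), the closure properties other than the inverse are one-line verifications. For a linear combination, $\Pi(\alpha_1 A+\alpha_2 B)=\alpha_1\Pi A+\alpha_2\Pi B=\alpha_1 A\Pi+\alpha_2 B\Pi=(\alpha_1 A+\alpha_2 B)\Pi$. For the product, $\Pi(AB)=(\Pi A)B=(A\Pi)B=A(\Pi B)=A(B\Pi)=(AB)\Pi$; iterating yields every power $A^{j}$, and combining with the linear case yields $q(A)=\sum_j\alpha_j A^{j}$ (and $\tens{I}_{nnp}$ being block circulant absorbs any constant term). For the transpose, transpose $\Pi A=A\Pi$ to get $A^{\top}\Pi^{\top}=\Pi^{\top}A^{\top}$; since $\Pi^{\top}=Z_p^{\top}\otimes I_n=Z_p^{-1}\otimes I_n=\Pi^{-1}$, multiplying through by $\Pi$ shows $A^{\top}$ commutes with $\Pi$. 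The conjugate transpose is the same once one notes $\Pi$ is real, so conjugating $\Pi A=A\Pi$ gives $\Pi\overline{A}=\overline{A}\,\Pi$, and then the transpose step applied to $\overline{A}$ gives $A^{H}=\overline{A}^{\top}$ commuting with $\Pi$.

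The one step that is not purely formal — and the main, though mild, obstacle — is the inverse, since $A\mapsto A^{-1}$ is not assembled from sums and products. I would give the short argument: if $\Pi A=A\Pi$ and $A$ is invertible, left- and right-multiply by $A^{-1}$ to get $A^{-1}\Pi=\Pi A^{-1}$, so $A^{-1}$ satisfies (ii) and hence is block circulant. I would also point out the alternative route through (iii), which is the one actually used later in the paper: $(F_p^{H}\otimes I_n)A(F_p\otimes I_n)=\mathrm{diag}(\hat A_1,\dots,\hat A_p)$ for certain $n\times n$ blocks $\hat A_\ell$, the matrix $A$ is invertible iff all $\hat A_\ell$ are, and then $(F_p^{H}\otimes I_n)A^{-1}(F_p\otimes I_n)=\mathrm{diag}(\hat A_1^{-1},\dots,\hat A_p^{-1})$ is again block diagonal, so $A^{-1}$ is block circulant by (iii). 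This also explains the remark following Definition \ref{def2-4} that invertible (resp. orthogonal) block circulant matrices form a group. The whole statement is standard and can be cited to \cite{Chan1}; the only part worth writing out is the equivalence (i)--(iii), everything else being immediate from it.
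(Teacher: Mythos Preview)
Your argument is correct. The equivalence (i)--(iii) is the standard structural characterisation of block circulant matrices, and once it is in hand the closure under transpose, conjugate transpose, linear combination, product, polynomial, and inverse follows exactly as you write; in particular your two routes to $A^{-1}$ (commutation with $\Pi$, or block-diagonalisation by $F_p\otimes I_n$) are both valid.

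There is, however, nothing to compare against: the paper does not prove Lemma~\ref{lem2-2} at all. It is stated with a citation to \cite{Chan1} and used as a black box. Your write-up therefore supplies strictly more than the paper does, and you already anticipate this in your final sentence. If you wish to match the paper's treatment you may simply cite \cite{Chan1}; if you prefer a self-contained manuscript, what you have is fine, and the block-diagonalisation form (iii) is indeed the one the paper relies on implicitly throughout (e.g.\ in Theorem~\ref{the2-1} and Theorem~\ref{the2-2}).
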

\begin{lemma}\label{lem2-3} {\rm \cite{Lund1}}
   Let tensors $\tens{A}\in \mathbb{C}^{n\times n\times p}$ and $\tens{B}\in \mathbb{C}^{n\times s\times p}$. Then \\
{\rm (1)}\ $\unfold (\tens{A})=\bcirc(\tens{A})\widehat{E_{1}}^{np\times n}$,\\
{\rm (2)}\ $\bcirc(\fold(\bcirc(\tens{A})\widehat{E_{1}}^{np\times n}))=\bcirc(\tens{A})$,\\
{\rm (3)}\ $\bcirc(\tens{A}*\tens{B})=\bcirc(\tens{A})\bcirc(\tens{B})$,\\
{\rm (4)}\ $\bcirc(\tens{A})^j=\bcirc(\tens{A}^j)$, for all $j=0,1,\ldots$,\\
{\rm (5)}\ $(\tens{A}*\tens{B})^{H}=\tens{B}^H *\tens{A}^H$,\\
{\rm (6)}\ $\bcirc(\tens{A}^{\top})=(\bcirc(\tens{A}))^{\top}$,\ $\bcirc(\tens{A}^{H})=(\bcirc(\tens{A}))^{H}$.
\end{lemma}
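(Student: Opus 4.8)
The plan is to prove the six identities in the order $(6)\to(1)\to(2)\to(3)\to(4)\to(5)$, drawing only on the definitions of $\bcirc$, $\unfold$, $\fold$ and the T-product, together with Lemma \ref{lem2-2} and the fact (noted after Definition \ref{def2-4}) that $\bcirc$ is a bijection onto the set of block circulant matrices. The only place where genuine combinatorics intervenes is $(6)$, so I would dispatch that first; the remaining parts are then short deductions.

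For $(6)$, write the $(i,j)$ block of $\bcirc(\tens{A})$ as $A^{(\sigma(i,j))}$ with $\sigma(i,j)=((i-j)\bmod p)+1$, which one reads off directly from the displayed form of $\bcirc(\tens{A})$. By Definition \ref{def2-2} the frontal slices of $\tens{A}^{\top}$ are $(A^{\top})^{(k)}=(A^{(\tau(k))})^{\top}$ with $\tau(k)=((1-k)\bmod p)+1$. Hence the $(i,j)$ block of $\bcirc(\tens{A}^{\top})$ is $(A^{(\tau(\sigma(i,j)))})^{\top}$, whereas the $(i,j)$ block of $(\bcirc(\tens{A}))^{\top}$ is $(A^{(\sigma(j,i))})^{\top}$. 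A one-line congruence check gives $\tau(\sigma(i,j))=((j-i)\bmod p)+1=\sigma(j,i)$, so the two matrices agree block by block; the conjugate-transpose statement is identical modulo an extra entrywise conjugation.

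Part $(1)$ is immediate: $\widehat{E_{1}}^{np\times n}=\hat{e}_1^p\otimes I_n$, so $\bcirc(\tens{A})\widehat{E_{1}}^{np\times n}$ is precisely the first block column of $\bcirc(\tens{A})$, which by the displayed form equals $\unfold(\tens{A})$. Part $(2)$ follows by applying $\fold$ — the inverse of $\unfold$ — to $(1)$ to get $\fold(\bcirc(\tens{A})\widehat{E_{1}}^{np\times n})=\fold(\unfold(\tens{A}))=\tens{A}$, and then applying $\bcirc$ to both sides. Part $(3)$ is the heart of the lemma: from Definition \ref{def2-1}, $\unfold(\tens{A}*\tens{B})=\bcirc(\tens{A})\unfold(\tens{B})$, so by $(1)$ the first block column of $\bcirc(\tens{A}*\tens{B})$ equals $\bcirc(\tens{A})$ times the first block column of $\bcirc(\tens{B})$. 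By Lemma \ref{lem2-2} the product $\bcirc(\tens{A})\bcirc(\tens{B})$ is again block circulant with $n\times n$ blocks, and such a matrix is determined uniquely by its first block column; since that first block column is also $\bcirc(\tens{A})\unfold(\tens{B})$, the two block circulant matrices $\bcirc(\tens{A}*\tens{B})$ and $\bcirc(\tens{A})\bcirc(\tens{B})$ coincide. I expect this ``first block column determines the matrix'' reduction to be the main (and only) real point.

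Finally, $(4)$ follows by induction on $j$: for $j=0$ we have $\bcirc(\tens{A})^{0}=I_{np}=\bcirc(\tens{I}_{nnp})=\bcirc(\tens{A}^{0})$ by Definition \ref{def2-3}, and the inductive step reads $\bcirc(\tens{A})^{j+1}=\bcirc(\tens{A}^{j})\bcirc(\tens{A})=\bcirc(\tens{A}^{j}*\tens{A})=\bcirc(\tens{A}^{j+1})$ using $(3)$. For $(5)$, apply $\bcirc$ and chain $(6)$ and $(3)$: $\bcirc((\tens{A}*\tens{B})^{H})=(\bcirc(\tens{A})\bcirc(\tens{B}))^{H}=\bcirc(\tens{B})^{H}\bcirc(\tens{A})^{H}=\bcirc(\tens{B}^{H})\bcirc(\tens{A}^{H})=\bcirc(\tens{B}^{H}*\tens{A}^{H})$, and injectivity of $\bcirc$ then yields $(\tens{A}*\tens{B})^{H}=\tens{B}^{H}*\tens{A}^{H}$.
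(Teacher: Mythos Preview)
The paper does not prove Lemma \ref{lem2-3}; it is stated with a citation to \cite{Lund1} and used as a black box. Your argument is correct and self-contained, so there is nothing to compare against in the paper itself.

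One minor caveat: in part $(3)$ you invoke Lemma \ref{lem2-2} to conclude that $\bcirc(\tens{A})\bcirc(\tens{B})$ is block circulant, but Lemma \ref{lem2-2} as stated in the paper covers only square $np\times np$ block circulants with $n\times n$ blocks, whereas here $\tens{B}\in\mathbb{C}^{n\times s\times p}$ so $\bcirc(\tens{B})$ is $np\times sp$ with $n\times s$ blocks. The same issue recurs in your proof of $(5)$, where you implicitly use the analogue of $(6)$ for the possibly non-square $\tens{B}$. Both extensions are immediate (the block-index calculation you gave for $(6)$ never uses squareness, and the product of compatible block circulants is block circulant by the same Fourier diagonalization or direct block-product check), but strictly speaking you should say a word to that effect rather than citing Lemma \ref{lem2-2} verbatim.
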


\section{Main Results}

\subsection{T-Jordan canonical form}
It is a well-known result that every matrix has its Jordan canonical form. The Jordan canonical form is named after Camille Jordan, who first introduced the Jordan decomposition theorem in 1870 and it is of great use in differential equations, interpolation theory, operation theory, functional analysis, matrix computations  \cite{Golub1, Higham1, Horn1}.\par
For third order tensors, we can also introduce the Jordan canonical form based on the tensor T-product. Similar to matrix cases, we will only consider complex F-square third order tensors, that is, tensors in the space $\mathbb{C}^{n\times n\times p}$.
\begin{definition}\label{def2-7} {\rm (Similar transformation)} Suppose $\tens{A}$, $\tens{B}\in\mathbb{C}^{n\times n\times p}$ are two F-square complex tensors. We say $\tens{B}$ is similar to $\tens{A}$ if there exists a invertible tensor $\tens{P}\in\mathbb{C}^{n\times n\times p}$ satisfying
\begin{equation}
\tens{B}=\tens{P}^{-1}*\tens{A}*\tens{P}.
\end{equation}
\end{definition}
Now we dedicate to find the canonical form under the similar relation. We have the following important lemma.
\begin{lemma}\label{lem2-4}
Suppose $A^{(1)}, A^{(2)}, \cdots, A^{(p)}, B^{(1)}, B^{(2)}, \cdots, B^{(p)}\in \mathbb{C}^{n\times n}$ are complex matrices satisfying
$$
\begin{bmatrix}
 A^{(1)} &  A^{(p)}  &  A^{(p-1)} & \cdots &  A^{(2)}\\

 A^{(2)} &  A^{(1)}  &  A^{(p)} & \cdots &  A^{(3)}\\

\vdots  & \ddots& \ddots & \ddots & \vdots\\

 A^{(p)} &  A^{(p-1)}  &  \ddots & A^{(2)} &  A^{(1)}\\
\end{bmatrix}=
(F_{p}\otimes I_{n} )
\begin{bmatrix}
B^{(1)} &  &  &  \\

  & B^{(2)} &   & \\

  & & \ddots &\\

 & &  & B^{(p)}
\end{bmatrix}
(F_{p}^{H}\otimes I_{n} ),
$$
where $F_{p}$ is the discrete Fourier matrix of size $p\times p$.
Then $B^{(1)}, B^{(2)}, \cdots, B^{(p)}$ are diagonal (sub-diagonal, upper-triangular, lower-triangular) matrices if and only if $A^{(1)}, A^{(2)}, \cdots, A^{(p)}$ are diagonal (sub-diagonal, upper-triangular, lower-triangular) matrices.
\end{lemma}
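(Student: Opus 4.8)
The plan is to reduce the statement to the elementary observation that each of the matrix classes in question --- diagonal, sub-diagonal, upper-triangular, lower-triangular --- is a \emph{linear subspace} of $\mathbb{C}^{n\times n}$ cut out by a fixed zero pattern, combined with the fact that the displayed identity sets up an \emph{invertible linear} correspondence between the tuples $(A^{(1)},\dots,A^{(p)})$ and $(B^{(1)},\dots,B^{(p)})$ in which each entry of one tuple is a scalar linear combination of the entries of the other.

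First I would make that linear correspondence explicit. Writing out the block-Kronecker products, the $(s,t)$ block of $(F_p\otimes I_n)\,\mathrm{diag}(B^{(1)},\dots,B^{(p)})\,(F_p^H\otimes I_n)$ equals $\sum_{k=1}^p (F_p)_{sk}\,\overline{(F_p)_{tk}}\,B^{(k)}$, which is a scalar linear combination of $B^{(1)},\dots,B^{(p)}$; since the block entries of $\bcirc(\tens{A})$ are exactly $A^{(1)},\dots,A^{(p)}$ (each occurring in some block position), this already shows that every $A^{(j)}$ is a scalar linear combination of $B^{(1)},\dots,B^{(p)}$. Conversely, multiplying the displayed identity on the left by $F_p^H\otimes I_n$ and on the right by $F_p\otimes I_n$ and using that $F_p$ is unitary, i.e.\ $(F_p\otimes I_n)^{-1}=F_p^H\otimes I_n$, gives $\mathrm{diag}(B^{(1)},\dots,B^{(p)})=(F_p^H\otimes I_n)\,\bcirc(\tens{A})\,(F_p\otimes I_n)$, so each $B^{(k)}$ is in turn a scalar linear combination of $A^{(1)},\dots,A^{(p)}$. (Equivalently, one may just quote the standard block-diagonalization of block circulant matrices from \cite{Chan1,Jin1}, which yields $B^{(k)}=\sum_{j=1}^p \omega^{(k-1)(j-1)}A^{(j)}$ up to the chosen normalization, with the inverse relation supplied by $F_p^{-1}=F_p^H$.)

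Then I would conclude as follows. Let $\mathcal{S}\subseteq\mathbb{C}^{n\times n}$ denote the set of matrices supported on a fixed index set: $\{(i,i)\}$ for the diagonal case, $\{(i,i)\}\cup\{(i,i{+}1)\}$ for the (upper-bidiagonal, i.e.\ Jordan) "sub-diagonal" case, $\{(i,j):i\le j\}$ for upper-triangular, and $\{(i,j):i\ge j\}$ for lower-triangular. Each such $\mathcal{S}$ is closed under addition and scalar multiplication. If all $A^{(j)}\in\mathcal{S}$, then each $B^{(k)}$, being a scalar linear combination of the $A^{(j)}$, also lies in $\mathcal{S}$; the reverse implication follows by the symmetric argument using the inverse relation. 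This establishes the "if and only if".

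I do not expect a genuine obstacle here: the content is entirely the bookkeeping of the block-Kronecker computation together with the trivial remark that these matrix classes are linear subspaces. The only points needing a little care are to phrase "sub-diagonal" as a fixed zero-pattern class so that it too is visibly a subspace, and to exhibit the linear correspondence in \emph{both} directions --- which is immediate from $F_p^{-1}=F_p^H$.
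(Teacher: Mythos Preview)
Your proposal is correct and follows essentially the same approach as the paper: both establish that each $A^{(i)}$ is a scalar linear combination of the $B^{(k)}$ (and vice versa), and then use that the classes in question are closed under such combinations. The only cosmetic difference is that the paper obtains the inverse relation by noting the coefficient matrix is a Vandermonde matrix, whereas you invoke unitarity of $F_p$ directly; your explicit remark that the matrix classes are linear subspaces cut out by a fixed zero pattern is left implicit in the paper.
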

\begin{proof}
By the definition of the Fourier matrix and matrix multiplication, each subblock $A^{(i)}\ ( i= 1,2,\ldots, p)$ are the linear combination in the complex field of the subblocks $B^{(1)}, B^{(2)}, \cdots, B^{(p)}$.\par
Furthermore, we reveal the relationship between complex matrices $A^{(1)}, A^{(2)}, \cdots, A^{(p)}$ and $B^{(1)}, B^{(2)}, \cdots, B^{(p)}$:
\begin{equation}
\left\{
\begin{aligned}
&A^{(1)}=\frac{1}{p}(\omega^0 B^{(1)}+\omega^0 B^{(2)}+\cdots+\omega^0 B^{(p)}),\\
&A^{(2)}=\frac{1}{p}(\omega^0 B^{(1)}+\omega^1 B^{(2)}+\cdots+\omega^{p-1} B^{(p)}),\\
&\ \ \ \ \ \ \ \ \ \ \ \ \ \ \ \ \ \ \ \ \ \ \ \ \ \ \cdots\\
&A^{(p)}=\frac{1}{p}(\omega^0 B^{(1)}+\omega^{p-1} B^{(2)}+\cdots+\omega^{(p-1)(p-1)} B^{(p)}),\\
\end{aligned}
\right.
\end{equation}
where $\omega=e^{-2\pi {\textbf i}/p}$ is the primitive $p$-th root of unity and is usually called the {\it phase term}.\par
On the other hand, since the matrix
$$
\begin{bmatrix}
\omega^0&\omega^0&\cdots&\omega^0\\
\omega^0&\omega^1&\cdots&\omega^{p-1} \\
&&\ddots&\\
\omega^0&\omega^{p-1}&\cdots&\omega^{(p-1)(p-1)}
\end{bmatrix}=
\begin{bmatrix}
1&1&\cdots&1\\
1&\omega^1&\cdots&\omega^{p-1} \\
&&\ddots&\\
1&\omega^{p-1}&\cdots&\omega^{(p-1)(p-1)}
\end{bmatrix}
$$
is the Vandermonde matrix \cite{Horn1}, which is invertible, matrices $B^{(1)}, B^{(2)}, \cdots, B^{(p)}$ are also the linear combination of $A^{(1)}, A^{(2)}, \cdots, A^{(p)}$.
\end{proof}
 \begin{figure}
  \centering
  \includegraphics[width=1\textwidth]{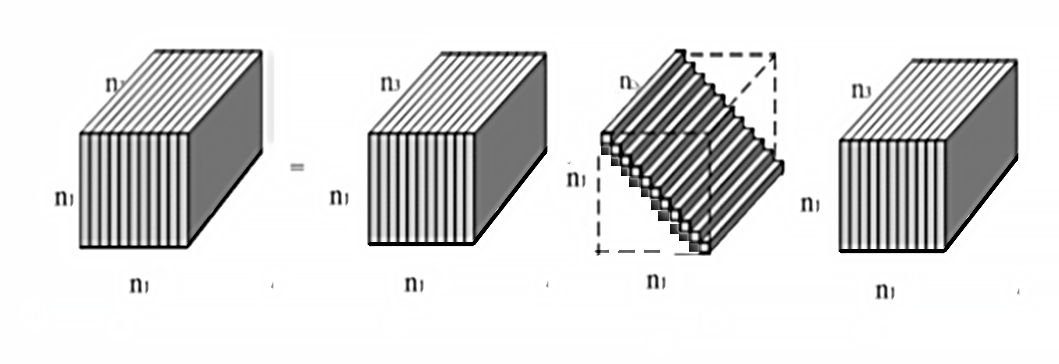}
  \caption{T-Jordan Canonical Form of Tensors}
\end{figure}
\begin{theorem}\label{the2-1} {\rm (T-Jordan Canonical Form)} Let $\tens{A}\in \mathbb{C}^{n\times n\times p}$ be a complex tensor, then there exists an invertible tensor $\tens{P}\in \mathbb{C}^{n\times n\times p}$ and a F-upper-bi-diagonal tensor $\tens{J}\in \mathbb{C}^{n\times n\times p}$ such that
$$
\tens{A}=\tens{P}^{-1}*\tens{J}*\tens{P}.
$$
\end{theorem}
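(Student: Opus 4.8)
The plan is to pass everything through the block circulant representation $\bcirc(\tens{A})$ and to invoke the classical Jordan form of each Fourier block. Recall from Lemma~\ref{lem2-4} that $(F_{p}^{H}\otimes I_{n})\,\bcirc(\tens{A})\,(F_{p}\otimes I_{n})={\rm diag}(A_{1},A_{2},\ldots,A_{p})$ is block diagonal with $n\times n$ blocks. Each $A_{k}\in\mathbb{C}^{n\times n}$ is an ordinary square matrix, so it admits a Jordan decomposition $A_{k}=X_{k}J_{k}X_{k}^{-1}$ with $X_{k}$ invertible and $J_{k}$ a Jordan matrix, which in particular is upper bidiagonal (supported on the main diagonal and the first superdiagonal). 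Setting $\widehat{X}={\rm diag}(X_{1},\ldots,X_{p})$ and $\widehat{J}={\rm diag}(J_{1},\ldots,J_{p})$, we obtain ${\rm diag}(A_{1},\ldots,A_{p})=\widehat{X}\,\widehat{J}\,\widehat{X}^{-1}$ and hence
$$
\bcirc(\tens{A})=(F_{p}\otimes I_{n})\,\widehat{X}\,\widehat{J}\,\widehat{X}^{-1}\,(F_{p}^{H}\otimes I_{n}).
$$

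Next I would define the candidate tensors $\tens{J}$ and $\tens{P}$ by
$$
\bcirc(\tens{J}):=(F_{p}\otimes I_{n})\,\widehat{J}\,(F_{p}^{H}\otimes I_{n}),\qquad
\bcirc(\tens{P}):=(F_{p}\otimes I_{n})\,\widehat{X}^{-1}\,(F_{p}^{H}\otimes I_{n}).
$$
For this to be legitimate one needs both right-hand sides to be block circulant matrices with $n\times n$ blocks; this is exactly the structure theorem underlying Lemmas~\ref{lem2-2}--\ref{lem2-4}, namely that conjugation by $F_{p}\otimes I_{n}$ is a bijection between $n\times n$-block diagonal matrices and $n\times n$-block circulant matrices. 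Applying $\bcirc^{-1}$ then produces well-defined tensors $\tens{J},\tens{P}\in\mathbb{C}^{n\times n\times p}$, and $\tens{P}$ is invertible because $\bcirc(\tens{P})$ is. Using Lemma~\ref{lem2-3}(3) together with $\bcirc(\tens{P}^{-1})=\bcirc(\tens{P})^{-1}$, the display above gives $\bcirc(\tens{P}^{-1}*\tens{J}*\tens{P})=\bcirc(\tens{P})^{-1}\bcirc(\tens{J})\bcirc(\tens{P})=(F_{p}\otimes I_{n})\,\widehat{X}\,\widehat{J}\,\widehat{X}^{-1}\,(F_{p}^{H}\otimes I_{n})=\bcirc(\tens{A})$, and injectivity of $\bcirc$ yields $\tens{A}=\tens{P}^{-1}*\tens{J}*\tens{P}$.

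To complete the argument I would verify that $\tens{J}$ has the announced F-upper-bi-diagonal shape. The matrices supported on a fixed set of index positions form a linear subspace, and by the explicit linear relations~(2) in the proof of Lemma~\ref{lem2-4} the $i$th frontal slice of $\tens{J}$ is $J^{(i)}=\frac{1}{p}\sum_{k=1}^{p}\omega^{(i-1)(k-1)}J_{k}$ with $\omega=e^{-2\pi{\textbf i}/p}$. Since every $J_{k}$ is supported on the main diagonal and the first superdiagonal, so is each $J^{(i)}$; and its superdiagonal entries are $\frac{1}{p}$ times $\mathbb{C}$-linear combinations, with coefficients among the powers $\omega^{t}$, of the $0$'s and $1$'s coming from the Jordan blocks. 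Thus $\tens{J}$ is F-upper-bi-diagonal in exactly the form described after the statement.

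The main obstacle is precisely this middle step: one cannot simply take a Jordan form of the single matrix $\bcirc(\tens{A})$, because that form need not be block circulant and so would not descend to a tensor. The decomposition has to be performed block-by-block in the Fourier domain and then ``re-circulantized,'' and the point that needs care is that this re-circulantization lands inside the class of $n\times n$-block circulant matrices. Once that is granted, invertibility of $\tens{P}$, the similarity identity, and the bidiagonal structure of $\tens{J}$ all follow routinely from Lemmas~\ref{lem2-2}--\ref{lem2-4}.
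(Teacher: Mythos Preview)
Your proposal is correct and follows essentially the same route as the paper's proof: block-diagonalize $\bcirc(\tens{A})$ via the Fourier matrix, take the Jordan form of each $n\times n$ Fourier block, reassemble the block diagonals of similarity matrices and Jordan blocks into block circulant matrices by conjugating back with $F_{p}\otimes I_{n}$, and then read off the F-upper-bi-diagonal structure of $\tens{J}$ from the linear relations of Lemma~\ref{lem2-4}. The only cosmetic difference is notation (the paper writes $B^{(i)}=(P^{(i)})^{-1}C^{(i)}P^{(i)}$ where you write $A_{k}=X_{k}J_{k}X_{k}^{-1}$), and you are, if anything, slightly more explicit than the paper about why the ``re-circulantization'' step lands in the class of block circulant matrices.
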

\begin{proof}
From Lemma \ref{lem2-4}, for the complex tensor $\tens{A}$, we have
$$
\begin{aligned}
\bcirc(\tens{A})&=\begin{bmatrix}
 A^{(1)} &  A^{(p)}  &  A^{(p-1)} & \cdots &  A^{(2)}\\

 A^{(2)} &  A^{(1)}  &  A^{(p)} & \cdots &  A^{(3)}\\

\vdots  & \ddots& \ddots & \ddots & \vdots\\

 A^{(p)} &  A^{(p-1)}  &  \ddots & A^{(2)} &  A^{(1)}\\
\end{bmatrix}\\
&=(F_{p}\otimes I_{n} )
\begin{bmatrix}
B^{(1)} &  &  &  \\

  & B^{(2)} &   & \\

  & & \ddots &\\

 & &  & B^{(p)}
\end{bmatrix}
(F_{p}^{H}\otimes I_{n} ).
\end{aligned}
$$
Since each matrix $B^{(i)}$ ($i=1,2, \ldots, p$) has its Jordan canonical form $B^{(i)}=(P^{(i)})^{-1} C^{(i)}P^{(i)}$, $P^{(i)}$, $C^{(i)}\in\mathbb{C}^{n\times n}$, $i=1,2, \ldots, p$,  it turns out
$$
\begin{aligned}
\bcirc(\tens{A})
&=(F_{p}\otimes I_{n} )
\begin{bmatrix}
(P^{(1)})^{-1} &  &  &  \\

  & (P^{(2)})^{-1} &   & \\

  & & \ddots &\\

 & &  & (P^{(p)})^{-1}
\end{bmatrix}
(F_{p}^{H}\otimes I_{n} )\\
&\times
(F_{p}\otimes I_{n} )
\begin{bmatrix}
C^{(1)} &  &  &  \\

  & C^{(2)} &   & \\

  & & \ddots &\\

 & &  & C^{(p)}
\end{bmatrix}
(F_{p}^{H}\otimes I_{n} )\\
&\times (F_{p}\otimes I_{n} )
\begin{bmatrix}
P^{(1)} &  &  &  \\

  & P^{(2)} &   & \\

  & & \ddots &\\

 & &  & P^{(p)}
\end{bmatrix}
(F_{p}^{H}\otimes I_{n} ).\\
\end{aligned}
$$
We have
$$
\begin{aligned}
\bcirc(\tens{A})
&=\left((F_{p}\otimes I_{n} )
\begin{bmatrix}
P^{(1)} &  &  &  \\

  & P^{(2)} &   & \\

  & & \ddots &\\

 & &  & P^{(p)}
\end{bmatrix}
(F_{p}^{H}\otimes I_{n} )\right)^{-1}\\
&\times
(F_{p}\otimes I_{n} )
\begin{bmatrix}
C^{(1)} &  &  &  \\

  & C^{(2)} &   & \\

  & & \ddots &\\

 & &  & C^{(p)}
\end{bmatrix}
(F_{p}^{H}\otimes I_{n} )\\
&\times (F_{p}\otimes I_{n} )
\begin{bmatrix}
P^{(1)} &  &  &  \\

  & P^{(2)} &   & \\

  & & \ddots &\\

 & &  & P^{(p)}
\end{bmatrix}
(F_{p}^{H}\otimes I_{n} )\\
&=\bcirc(\tens{P})^{-1}\bcirc(\tens{J})\bcirc(\tens{P}),
\end{aligned}
$$
which is equivalent to
$$
\tens{A}=\tens{P}^{-1}*\tens{J}*\tens{P}.
$$
We call the diagonal elements of $C^{(i)}$ {\textbf{T-eigenvalues}} of $\tens{A}$. \par
Furthermore, if we suppose that
$$
(F_{p}\otimes I_{n} )\times
\begin{bmatrix}
C^{(1)} &  &  &  \\

  & C^{(2)} &   & \\

  & & \ddots &\\

 & &  & C^{(p)}
\end{bmatrix}
\times (F_{p}^H\otimes I_{n} )=
\begin{bmatrix}
 J^{(1)} &  J^{(p)}  &  J^{(p-1)} & \cdots &  J^{(2)}\\

J^{(2)} &  J^{(1)}  &  J^{(p)} & \cdots &  J^{(3)}\\

\vdots  & \ddots& \ddots & \ddots & \vdots\\

J^{(p)} &  J^{(p-1)}  &  \ddots & J^{(2)} &  J^{(1)}\\
\end{bmatrix},
$$
where $J^{(i)}$ ($i=1,2, \ldots, p$) is the $i$-th frontal slice of the tensor $\tens{J}$.

 By Lemma \ref{lem2-4}, we obtain that each $J^{(i)}$ is a complex upper two-diagonal matrix and is the linear combination with phase terms of $C^{(i)}$:
\begin{equation}
\left\{
\begin{aligned}
&J^{(1)}=\frac{1}{p}(\omega^0 C^{(1)}+\omega^0 C^{(2)}+\cdots+\omega^0 C^{(p)}),\\
&J^{(2)}=\frac{1}{p}(\omega^0 C^{(1)}+\omega^1 C^{(2)}+\cdots+\omega^{p-1} C^{(p)}),\\
&\ \ \ \ \ \ \ \ \ \ \ \ \ \ \ \ \ \ \ \ \ \ \ \ \ \ \cdots\\
&J^{(p)}=\frac{1}{p}(\omega^0 C^{(1)}+\omega^{p-1} C^{(2)}+\cdots+\omega^{(p-1)(p-1)} C^{(p)}).\\
\end{aligned}
\right.
\end{equation}
 Furthermore, since the above equations and operations are all invertible,  the Jordan canonical form $\tens{J}$ of the tensor $\tens{A}$ is unique.
\end{proof}
\begin{remark}\label{rem2-1} Now we establish the T-Jordan canonical form. If a tensor $\tens{A}$ is a real tensor, then its Jordan canonical form $\tens{J}$ will also be a real tensor due to the discrete Fourier transformation.
\end{remark}
The definition of T-function defined by Lund \cite{Lund1} has the equivalent expression as follows:
\begin{theorem}\label{the2-2}
If the tensor $\tens{A}$ has the factorization,
$$
\bcirc(\tens{A})=(F_p^H\otimes I_n)
\begin{bmatrix}
A_1&&&\\
&A_2&&\\
&&\ddots&\\
&&&A_p
\end{bmatrix}(F_p\otimes I_n),
$$
then the above definition of standard tensor T-function is equivalent to
\begin{equation}
\begin{aligned}
f(\tens{A})&=\bcirc^{-1}(f(\bcirc(\tens{A})))\\
&=\bcirc^{-1}\left((F_p^H\otimes I_n)
\begin{bmatrix}
f(A_1)&&&\\
&f(A_2)&&\\
&&\ddots&\\
&&&f(A_p)
\end{bmatrix}(F_p\otimes I_n)\right).
\end{aligned}
\end{equation}
\end{theorem}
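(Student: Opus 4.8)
The plan is to reduce the statement to two elementary facts: that applying $f$ to the matrix $\bcirc(\tens{A})$ keeps us inside the class of block circulant matrices, and that reading off the first block column of a block circulant matrix (which is exactly what post-multiplying by $\widehat{E_{1}}^{np\times n}$ and folding does) inverts the operator $\bcirc$. Once these are in place, the first equality is immediate, and the second equality follows from the standard block-diagonalization of block circulant matrices by the Fourier transform together with the similarity-invariance of matrix functions.

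First I would check that $f(\bcirc(\tens{A}))$ is again a block circulant matrix with $n\times n$ blocks. Since $f$ is assumed to be defined on the spectrum of $\bcirc(\tens{A})$, standard matrix-function theory (see \cite{Higham1}) provides a single polynomial $q$ with $f(\bcirc(\tens{A}))=q(\bcirc(\tens{A}))$. By Lemma \ref{lem2-2}, $q(\bcirc(\tens{A}))=\sum_j\alpha_j\bcirc(\tens{A})^j$ is block circulant, hence so is $f(\bcirc(\tens{A}))$; in particular $\bcirc^{-1}(f(\bcirc(\tens{A})))$ is well defined. Next, for any block circulant $M=\bcirc(\tens{C})$, Lemma \ref{lem2-3}(1) gives $M\widehat{E_{1}}^{np\times n}=\bcirc(\tens{C})\widehat{E_{1}}^{np\times n}=\unfold(\tens{C})$, so $\fold(M\widehat{E_{1}}^{np\times n})=\tens{C}=\bcirc^{-1}(M)$. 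Taking $M=f(\bcirc(\tens{A}))$ yields $f(\tens{A})=\fold(f(\bcirc(\tens{A}))\widehat{E_{1}}^{np\times n})=\bcirc^{-1}(f(\bcirc(\tens{A})))$, which is the first claimed identity.

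For the block-diagonal formula I would use the hypothesis $\bcirc(\tens{A})=(F_p^H\otimes I_n)\,{\rm diag}(A_1,\dots,A_p)\,(F_p\otimes I_n)$. Because $F_p\otimes I_n$ is unitary this is a similarity, so ${\rm spec}(\bcirc(\tens{A}))=\bigcup_{i=1}^p{\rm spec}(A_i)$; in particular the interpolating polynomial $q$ chosen above also satisfies $q(A_i)=f(A_i)$ for every $i$. Using $f(\bcirc(\tens{A}))=q(\bcirc(\tens{A}))$ and Lemma \ref{lem2-1}(3), $f(\bcirc(\tens{A}))=(F_p^H\otimes I_n)\,q({\rm diag}(A_1,\dots,A_p))\,(F_p\otimes I_n)=(F_p^H\otimes I_n)\,{\rm diag}(f(A_1),\dots,f(A_p))\,(F_p\otimes I_n)$, and applying $\bcirc^{-1}$ gives the displayed expression.

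I expect the only genuinely delicate point to be the passage from an arbitrary function $f$ defined on the spectrum to a polynomial $q$: one must be sure the \emph{same} $q$ serves simultaneously for $\bcirc(\tens{A})$ and for each block $A_i$. This is exactly what the spectral inclusion ${\rm spec}(A_i)\subseteq{\rm spec}(\bcirc(\tens{A}))$ guarantees, since a matrix function is by definition the Hermite interpolant of $f$ on the spectrum; everything else is bookkeeping with Lemmas \ref{lem2-2} and \ref{lem2-3}. (If one wishes to avoid even this, one can instead assume $f$ is analytic and argue via power series, which commute with $\bcirc^{-1}$ termwise by Lemma \ref{lem2-3}(4); but the polynomial route is cleaner and matches the setting of the paper.)
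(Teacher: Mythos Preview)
Your argument is correct and, in fact, more complete than the paper's own proof. The paper argues differently: it writes $f(\tens{A})*\tens{I}=\fold(\bcirc(f(\tens{A}))\widehat{E_{1}}^{np\times n})$ from the T-product definition, compares this with the defining formula $f(\tens{A})=\fold(f(\bcirc(\tens{A}))\widehat{E_{1}}^{np\times n})$, and concludes $\bcirc(f(\tens{A}))=f(\bcirc(\tens{A}))$; the block-diagonal expression is then left as immediate. That comparison step, however, only shows the two $np\times np$ matrices have the same first block column; equality follows only because both are block circulant, which the paper never states. Your route makes precisely this point explicit (via Lemma~\ref{lem2-2} and the polynomial representation of $f$) and then reads off $\bcirc^{-1}$ directly, bypassing the detour through $f(\tens{A})*\tens{I}$. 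You also spell out the second equality through similarity invariance, which the paper omits entirely. In short: same underlying facts, but your organization is more transparent and closes a gap the paper glosses over.
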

\begin{proof}
By the definition of tensor T-product, we have
$$
f(\tens{A})*\tens{B}=\fold(\bcirc(f(\tens{A}))\unfold(\tens{B})).
$$
By taking $\tens{B}=\tens{I}$, we have
$$
f(\tens{A})=\fold(\bcirc(f(\tens{A}))\unfold(\tens{B}))=\fold(\bcirc(f(\tens{A}))\widehat{E_{1}}^{np\times n}),
$$
by comparing with the definition of tensor T-function $f(\tens{A})= {\rm fold}(f(\bcirc(\tens{A}))\widehat{E_{1}}^{np\times n})$, we have
$$
\bcirc(f(\tens{A}))=f(\bcirc(\tens{A})),
$$
which comes to the result.
\end{proof}
The following properties hold for the standard tensor function.
\begin{lemma}\label{lem2-5} {\rm \cite{Lund1}} Let $\ten{A}\in \mathbb{C}^{n\times n\times p}$ and $f:\mathbb{C}\rightarrow \mathbb{C}$ be defined on a region in the complex plane containing all the spectrum of the matrix $\bcirc(\ten{A})$. Then the standard tensor function $f(\tens{A})$ satisfies the following properties:\\
{\rm (1)} $f(\tens{A})$ commutes with $\tens{A}$,\\
{\rm (2)} $f(\tens{A}^H)=f(\tens{A})^H$,\\
{\rm (3)} $f(\tens{X}*\tens{A}*\tens{X}^{-1})=\tens{X}*f(\tens{A})*\tens{X}^{-1}$,\\
{\rm (4)} If $f(x)=\sum_{i=0}^n c_i x^i$ is a polynomial scalar function of degree $n$, $c_i\in \mathbb{C}, (i=0,1,\ldots, n$). Then the standard tensor function satisfies $f(\tens{A})=\sum_{i=0}^n c_i \tens{A}^i$, where $\tens{A}^i=\underbrace{\tens{A}*\tens{A}*\cdots*\tens{A}}_{i\  times}$.
\end{lemma}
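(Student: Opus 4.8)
The plan is to push everything down to the matrix level and let the classical matrix-function facts in Lemma~\ref{lem2-1} do the work. The bridge is Theorem~\ref{the2-2}, which says $\bcirc(f(\tens{A}))=f(\bcirc(\tens{A}))$; combined with the fact that $\bcirc$ is injective (it admits the inverse operator $\bcirc^{-1}$) and with the homomorphism properties of $\bcirc$ collected in Lemma~\ref{lem2-3}, each of the four claims becomes a short identity between $np\times np$ matrices that one then transports back through $\bcirc^{-1}$. I would first note, for bookkeeping, that the spectrum of the block circulant matrix $\bcirc(\tens{A})$ is the union of the spectra of the Fourier blocks $A_j$, so the hypothesis that $f$ is defined on $\mathrm{spec}(\bcirc(\tens{A}))$ is exactly what is needed for $f(\bcirc(\tens{A}))$, and hence every $f(A_j)$, to be meaningful; I would also record the consequence $\bcirc(\tens{X}^{-1})=\bcirc(\tens{X})^{-1}$, which follows from $\bcirc(\tens{X})\bcirc(\tens{X}^{-1})=\bcirc(\tens{X}*\tens{X}^{-1})=\bcirc(\tens{I})=I$ via Lemma~\ref{lem2-3}(3).

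Then the four items go as follows. For (1), apply $\bcirc$: by Lemma~\ref{lem2-3}(3) and Theorem~\ref{the2-2}, $\bcirc(f(\tens{A})*\tens{A})=f(\bcirc(\tens{A}))\bcirc(\tens{A})$, which equals $\bcirc(\tens{A})f(\bcirc(\tens{A}))=\bcirc(\tens{A}*f(\tens{A}))$ by Lemma~\ref{lem2-1}(1); injectivity of $\bcirc$ gives commutativity. For (4), $\bcirc(f(\tens{A}))=f(\bcirc(\tens{A}))=\sum_{i=0}^{n}c_i\bcirc(\tens{A})^i=\sum_{i=0}^{n}c_i\bcirc(\tens{A}^i)=\bcirc\bigl(\sum_{i=0}^{n}c_i\tens{A}^i\bigr)$, using Theorem~\ref{the2-2}, the definition of a matrix polynomial, Lemma~\ref{lem2-3}(4), and linearity of $\bcirc$; injectivity concludes. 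For (3), $\bcirc(f(\tens{X}*\tens{A}*\tens{X}^{-1}))=f(\bcirc(\tens{X})\bcirc(\tens{A})\bcirc(\tens{X})^{-1})$ by Theorem~\ref{the2-2}, Lemma~\ref{lem2-3}(3) and the remark above; Lemma~\ref{lem2-1}(3) pulls $\bcirc(\tens{X})$ and $\bcirc(\tens{X})^{-1}$ outside, and reassembling via Lemma~\ref{lem2-3}(3) gives $\bcirc(\tens{X}*f(\tens{A})*\tens{X}^{-1})$. For (2), Lemma~\ref{lem2-3}(6) gives $\bcirc(\tens{A}^H)=\bcirc(\tens{A})^H$, so $\bcirc(f(\tens{A}^H))=f(\bcirc(\tens{A})^H)=f(\bcirc(\tens{A}))^H$ by Lemma~\ref{lem2-1}(2), and this is $\bcirc(f(\tens{A}))^H=\bcirc(f(\tens{A})^H)$, again by Lemma~\ref{lem2-3}(6); injectivity finishes.

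I do not expect a serious obstacle: once Theorem~\ref{the2-2} is available, the entire argument is the observation that $\bcirc$ is an injective homomorphism from $(\mathbb{C}^{n\times n\times p},*)$ into $(\mathbb{C}^{np\times np},\cdot)$ that is compatible with conjugate transpose, with inversion, and with polynomials, which is precisely what Lemmas~\ref{lem2-2}--\ref{lem2-3} and the existence of $\bcirc^{-1}$ provide. The one point I would flag explicitly is property (2): since $\tens{A}^{H}$ is not simply the slice-wise conjugate transpose of $\tens{A}$ (frontal slices $2$ through $p$ are reversed, cf. Definition~\ref{def2-2}), the identity $f(\tens{A}^H)=f(\tens{A})^H$ rests entirely on the nontrivial fact $\bcirc(\tens{A}^H)=\bcirc(\tens{A})^H$ from Lemma~\ref{lem2-3}(6); this is the only place where the tensor structure, rather than a blockwise computation, is essential.
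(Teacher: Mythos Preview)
Your proposal is correct. The paper itself does not supply a proof of Lemma~\ref{lem2-5}; it is quoted from Lund~\cite{Lund1} and stated without argument, so there is no ``paper's own proof'' to compare against. Your route---transporting each identity to $np\times np$ matrices via Theorem~\ref{the2-2} and the homomorphism properties in Lemma~\ref{lem2-3}, invoking the classical matrix facts of Lemma~\ref{lem2-1}, and then pulling back by injectivity of $\bcirc$---is the natural one and exactly the kind of argument the surrounding machinery in the paper is set up to enable. The bookkeeping you flag (that $\bcirc(\tens{X}^{-1})=\bcirc(\tens{X})^{-1}$, and that linearity of $\bcirc$ is needed for the polynomial case) is appropriate, and your observation about item~(2) hinging on Lemma~\ref{lem2-3}(6) rather than a naive slice-wise conjugate transpose is well taken.
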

\begin{corollary}\label{cor2-1}
Specially, if a tensor $\tens{A}\in \mathbb{C}^{n\times n\times p}$ has the Jordan canonical form $\tens{A}=\tens{P}^{-1}*\tens{J}*\tens{P}$, $f:\mathbb{C}^{n \times n \times p}\rightarrow \mathbb{C}^{n\times n\times p}$ is a standard tensor function, then
\begin{equation}
f(\tens{A})=\tens{P}^{-1}*f(\tens{J})*\tens{P}.
\end{equation}
\end{corollary}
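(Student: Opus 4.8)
The plan is to derive Corollary \ref{cor2-1} as an immediate consequence of property (3) in Lemma \ref{lem2-5}, which asserts that the standard tensor function respects T-similarity transformations: $f(\tens{X}*\tens{A}*\tens{X}^{-1})=\tens{X}*f(\tens{A})*\tens{X}^{-1}$. Since the T-Jordan canonical form from Theorem \ref{the2-1} gives $\tens{A}=\tens{P}^{-1}*\tens{J}*\tens{P}$ with $\tens{P}$ invertible, I would set $\tens{X}=\tens{P}^{-1}$ (which is itself invertible, with inverse $\tens{P}$, because invertible tensors under the T-product form a group as noted after Definition \ref{def2-4}). Then $\tens{A}=\tens{X}*\tens{J}*\tens{X}^{-1}$, and applying Lemma \ref{lem2-5}(3) yields $f(\tens{A})=f(\tens{X}*\tens{J}*\tens{X}^{-1})=\tens{X}*f(\tens{J})*\tens{X}^{-1}=\tens{P}^{-1}*f(\tens{J})*\tens{P}$, which is exactly the claim.

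The one point that needs a word of care is that Lemma \ref{lem2-5} requires $f$ to be defined on a region containing the entire spectrum of $\bcirc(\tens{A})$. Here I would note that $\bcirc(\tens{A})$ and $\bcirc(\tens{J})$ are similar matrices, since $\bcirc(\tens{A})=\bcirc(\tens{P})^{-1}\bcirc(\tens{J})\bcirc(\tens{P})$ by Lemma \ref{lem2-3}(3) applied to the factorization, hence they share the same spectrum; so the hypothesis on $f$ being a standard tensor function of $\tens{A}$ is automatically the correct hypothesis for applying property (3) with $\tens{J}$ in the role of the base tensor. For completeness one could also give the direct computation: by Theorem \ref{the2-2}, $\bcirc(f(\tens{A}))=f(\bcirc(\tens{A}))=f(\bcirc(\tens{P})^{-1}\bcirc(\tens{J})\bcirc(\tens{P}))=\bcirc(\tens{P})^{-1}f(\bcirc(\tens{J}))\bcirc(\tens{P})=\bcirc(\tens{P}^{-1})\bcirc(f(\tens{J}))\bcirc(\tens{P})=\bcirc(\tens{P}^{-1}*f(\tens{J})*\tens{P})$, where the middle equality is the standard matrix-function similarity property (Lemma \ref{lem2-1}(3)), and then applying $\bcirc^{-1}$ gives the result.

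There is no real obstacle here; the statement is essentially a restatement of Lemma \ref{lem2-5}(3) specialized to the T-Jordan form, and the only thing to spell out is the bookkeeping that $\tens{P}^{-1}$ is a legitimate invertible tensor and that the spectral hypotheses match up under T-similarity. I would therefore present the proof in two or three lines invoking Lemma \ref{lem2-5}(3), and optionally append the $\bcirc$-level computation above as an alternative verification through Theorem \ref{the2-2} and Lemma \ref{lem2-1}(3).
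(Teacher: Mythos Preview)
Your proposal is correct and matches the paper's treatment: the paper states Corollary~\ref{cor2-1} without proof immediately after Lemma~\ref{lem2-5}, indicating it is meant as a direct specialization of property~(3) there, exactly as you derive it. Your optional $\bcirc$-level verification via Theorem~\ref{the2-2} and Lemma~\ref{lem2-1}(3) is a fine alternative but not needed.
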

The concept of nilpotent matrix \cite{Horn1} can be extended to tensors as follows.

\begin{definition}\label{def2-8} {\rm (Nilpotent tensor)} A tensor $\tens{A}\in \mathbb{C}^{n\times n\times p}$ is called nilpotent, if there exists a positive integer $s\in \mathbb{Z}$ such that $\tens{A}^s=\tens{O}$. If $s\in \mathbb{Z}$ is the smallest number satisfies the equation $\tens{A}^s=\tens{O}$, then we call $s$ the nilpotent index of $\tens{A}$.
\end{definition}

By the definition of T-eigenvalues, we have the following corollary.
\begin{corollary}\label{cor2-2} The tensor $\tens{A}$ is a nilpotent tensor if and only if all the T-eigenvalues of $\tens{A}$ equal to $0$.
\end{corollary}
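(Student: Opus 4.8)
The plan is to push the statement through the block circulant matrix $\bcirc(\tens{A})$ and its Fourier block-diagonalization, so that it reduces to the classical matrix fact that a complex matrix is nilpotent iff all its eigenvalues vanish. First I would record, using Lemma \ref{lem2-3}(4), that $\bcirc(\tens{A}^s)=\bcirc(\tens{A})^s$ for every nonnegative integer $s$, and that $\tens{A}^s=\tens{O}$ if and only if $\bcirc(\tens{A}^s)=0$ since $\bcirc$ is injective (it has the inverse $\bcirc^{-1}$). Hence $\tens{A}$ is a nilpotent tensor in the sense of Definition \ref{def2-8} if and only if the ordinary matrix $\bcirc(\tens{A})$ is nilpotent.

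Next I would invoke the Fourier block-diagonalization established in the proof of Theorem \ref{the2-1}, namely $\bcirc(\tens{A})=(F_p\otimes I_n)\,{\rm diag}(B^{(1)},\dots,B^{(p)})\,(F_p^H\otimes I_n)$. Because $F_p\otimes I_n$ is unitary, we get $\bcirc(\tens{A})^s=(F_p\otimes I_n)\,{\rm diag}\bigl((B^{(1)})^s,\dots,(B^{(p)})^s\bigr)\,(F_p^H\otimes I_n)$, so $\bcirc(\tens{A})$ is nilpotent if and only if each frontal block $B^{(i)}$ is nilpotent; in fact one may take the common nilpotency index to be the maximum of the individual ones.

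Then I would apply the standard matrix result: a square complex matrix is nilpotent if and only if all of its eigenvalues are zero (via its Jordan form, which is already in play here, or via the characteristic polynomial). Applied to each $B^{(i)}$, and recalling that $B^{(i)}$ is similar to the upper-triangular Jordan matrix $C^{(i)}$ whose diagonal entries are by definition the T-eigenvalues of $\tens{A}$, this shows $\bcirc(\tens{A})$ is nilpotent if and only if every T-eigenvalue of $\tens{A}$ equals $0$. Chaining the three equivalences gives the corollary.

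I do not anticipate a genuine obstacle; the only point needing a little care is the bookkeeping that nilpotency of the tensor under the T-product really does coincide with ordinary nilpotency of $\bcirc(\tens{A})$, which is exactly what Lemma \ref{lem2-3}(4) together with injectivity of $\bcirc$ supplies, and that similarity $B^{(i)}\sim C^{(i)}$ preserves the spectrum so that "eigenvalues of $B^{(i)}$" and "T-eigenvalues of $\tens{A}$" may be used interchangeably.
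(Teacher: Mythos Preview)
Your argument is correct and is exactly the reasoning the paper has in mind: the paper does not give a written proof of this corollary at all, merely prefacing it with ``By the definition of T-eigenvalues, we have the following corollary.'' You have simply filled in the details---reducing to nilpotency of $\bcirc(\tens{A})$ via Lemma~\ref{lem2-3}(4), block-diagonalizing by $F_p\otimes I_n$, and using the classical matrix fact on each block---that the paper leaves implicit.
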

\subsection{F-square tensor power series}
As an direct application of T-Jordan canonical form, we discuss the F-square tensor power series as an extension of Lund's results \cite{Lund1} and Theorem \ref{the2-2}.\par
\begin{definition}\label{def2-9} {\rm (F-square tensor polynomial)} Let $\tens{A}\in\mathbb{C}^{n\times n\times p}$ be a complex tensor. We call
$$
p(\tens{A})=\sum_{k=0}^{m}a_k\tens{A}^k=a_m\tens{A}^m+a_{m-1}\tens{A}^{m-1}+\cdots+a_1\tens{A}+a_0\tens{I}_{nnp}
$$
the polynomial of $\tens{A}$, whose degree is $m$.
\end{definition}
\begin{definition}\label{def2-10} {\rm (F-square tensor power series)} Let $\tens{A}\in\mathbb{C}^{n\times n\times p}$ be a complex tensor. We call
$$
p(\tens{A})=\sum_{k=0}^{\infty}a_k\tens{A}^k=a_0\tens{I}_{nnp}+a_1\tens{A}+a_2\tens{A}^2+\cdots
$$
the power series of $\tens{A}$.
\end{definition}
By the $(3)$ of Lemma \ref{lem2-5}, we can transfer the properties of the power series of tensor $\tens{A}$ to the properties of the power series. By Theorem \ref{the2-2}, we directly give the convergence theorem of tensor series as follows.
\begin{theorem}\label{the2-3} {\rm (Tensor series convergence)} Let $\tens{A}\in \mathbb{C}^{n\times n\times p}$ be a complex tensor and $f:\mathbb{C}\rightarrow \mathbb{C}$ be a complex power series given by
$$
f(z)=\sum_{k=0}^{\infty} a_k z^k.
$$
The convergence radius of $f$ is denoted by $\rho\in \mathbb{R}$, and $\rho$ is also called the convergence radius of tensor series $\sum\limits_{k=0}^{\infty}a_k\tens{A}^k$. Suppose the maximum of the T-eigenvalues of tensor $\tens{A}$ is $\lambda_0=\max\{|\lambda^{(i)}_j|, i=1,2,\ldots,p, j=1,2,\ldots,n\}$. If
$$
\rho >\lambda_0,
$$
then the tensor power series converges and the T-eigenvalues of $\sum\limits_{k=0}^{\infty}a_k\tens{A}^k$ is
$$
\sum_{k=0}^{\infty}a_k (\lambda^{(i)}_j)^k, \quad i=1,2,\ldots,p,\ j=1,2,\ldots,n.
$$
\end{theorem}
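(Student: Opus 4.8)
The plan is to push everything through the algebra isomorphism $\bcirc$ and the Fourier block-diagonalization, thereby reducing the statement to the classical convergence theory of matrix power series. First I would recall, from Lemmas \ref{lem2-2} and \ref{lem2-3}, that $\bcirc$ is a linear and multiplicative bijection from $\mathbb{C}^{n\times n\times p}$, equipped with the T-product, onto the algebra of $np\times np$ block circulant matrices, and from Definition \ref{def2-5}(3) that $\norm{\tens{A}}=\norm{\bcirc(\tens{A})}$, so that both $\bcirc$ and $\bcirc^{-1}$ are isometries. In particular, for every $N$,
$$
\bcirc\!\left(\sum_{k=0}^{N}a_k\tens{A}^k\right)=\sum_{k=0}^{N}a_k\bcirc(\tens{A})^k,
$$
so the partial sums $\sum_{k=0}^{N}a_k\tens{A}^k$ form a Cauchy (hence convergent) sequence if and only if the matrix partial sums $\sum_{k=0}^{N}a_k\bcirc(\tens{A})^k$ do, with corresponding limits under $\bcirc^{-1}$. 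It therefore suffices to study the matrix power series $\sum_k a_k\bcirc(\tens{A})^k$.

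Next I would write $\bcirc(\tens{A})=(F_p^H\otimes I_n)\,\mathrm{diag}(A_1,\dots,A_p)\,(F_p\otimes I_n)$ as in Theorem \ref{the2-2}, so that $\bcirc(\tens{A})^k=(F_p^H\otimes I_n)\,\mathrm{diag}(A_1^k,\dots,A_p^k)\,(F_p\otimes I_n)$ and
$$
\sum_{k=0}^{N}a_k\bcirc(\tens{A})^k=(F_p^H\otimes I_n)\,\mathrm{diag}\!\Big(\textstyle\sum_{k=0}^{N}a_kA_1^k,\dots,\sum_{k=0}^{N}a_kA_p^k\Big)\,(F_p\otimes I_n).
$$
Since $F_p\otimes I_n$ is a fixed unitary matrix, the left side converges if and only if each $\sum_k a_kA_i^k$ converges. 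From the proof of Theorem \ref{the2-1} and the definition of T-eigenvalues, the multiset of T-eigenvalues of $\tens{A}$ is exactly the union of the eigenvalues of the blocks $A_1,\dots,A_p$; hence $\lambda_0$ is the spectral radius of $\mathrm{diag}(A_1,\dots,A_p)$, i.e. of $\bcirc(\tens{A})$, and in particular $\lambda_0$ is at least the spectral radius of each $A_i$. The classical theory of matrix functions \cite{Higham1,Horn1} then gives: if $\rho$ exceeds the spectral radius of $A_i$, then $\sum_k a_kA_i^k$ converges absolutely to $f(A_i)$. Since $\rho>\lambda_0$, this holds for every $i$; undoing the two reductions shows that $\sum_k a_k\tens{A}^k$ converges, and since every partial sum is a polynomial in $\tens{A}$, Lemma \ref{lem2-5}(4) together with Theorem \ref{the2-2} identifies the limit as $f(\tens{A})$.

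It then remains to compute the T-eigenvalues of the limit. By Theorem \ref{the2-2},
$$
\bcirc(f(\tens{A}))=(F_p^H\otimes I_n)\,\mathrm{diag}\big(f(A_1),\dots,f(A_p)\big)\,(F_p\otimes I_n),
$$
so by the definition of T-eigenvalues the T-eigenvalues of $\sum_k a_k\tens{A}^k=f(\tens{A})$ are the eigenvalues of the matrices $f(A_i)$; and by the Jordan-form definition of a matrix function (Definition \ref{def2-6}), the eigenvalues of $f(A_i)$ are the scalars $f(\lambda)=\sum_{k=0}^{\infty}a_k\lambda^k$ as $\lambda$ ranges over $\mathrm{spec}(A_i)$. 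Relabelling the eigenvalues of $A_i$ as $\lambda_1^{(i)},\dots,\lambda_n^{(i)}$ and letting $i$ run over $1,\dots,p$ yields precisely the claimed values $\sum_{k=0}^{\infty}a_k(\lambda_j^{(i)})^k$.

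I expect the one genuinely delicate point to be the invocation of the matrix power series theorem: the correct hypothesis is a \emph{strict} inequality between the radius of convergence and the spectral radius (the boundary case $\rho=\lambda_0$ would require extra conditions on $f$ at the extremal eigenvalues, which is why the hypothesis $\rho>\lambda_0$ is used), and the cleanest self-contained justification passes through the Jordan decomposition of each $A_i$, where the entries of $\sum_k a_kJ_m(\lambda)^k$ are finite linear combinations of the numerical series $\sum_k a_k\binom{k}{j}\lambda^{k-j}$, $0\le j\le m-1$, each of which is a derivative of $f$ evaluated at $\lambda$ and hence converges absolutely for $|\lambda|<\rho$. Everything else — the isometry of $\bcirc$, the Fourier block-diagonalization, and the identification of T-eigenvalues with block eigenvalues — is routine bookkeeping built on the results already established above.
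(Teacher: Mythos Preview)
Your proposal is correct and follows the same route the paper indicates: the paper does not give a detailed proof of Theorem~\ref{the2-3} at all, but simply states (just before the theorem) that by Lemma~\ref{lem2-5}(3) and Theorem~\ref{the2-2} the tensor series question reduces directly to the classical matrix case. Your argument via the isometry $\bcirc$, the Fourier block-diagonalization, and the Jordan-block analysis of $\sum_k a_k A_i^k$ is exactly this reduction carried out in full, and is in fact considerably more careful than what the paper supplies.
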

By using Theorem \ref{the2-3}, we can extend some special kinds of scalar power series to tensor power series as follows,
$$
{\rm exp}(\tens{A})={\textbf e}^{\tens{A}}=\sum_{k=0}^{\infty}\frac{1}{k!}\tens{A}^k,\quad \sin(\tens{A})=\sum_{k=0}^{\infty}\frac{(-1)^{k-1}}{(2k-1)!}\tens{A}^{2k-1},
$$
$$
\cos(\tens{A})=\sum_{k=0}^{\infty}\frac{(-1)^{k}}{(2k)!}\tens{A}^{2k},\quad \ln(\tens{I}+\tens{A})=\sum_{k=0}^{\infty}\frac{(-1)^k}{k+1}\tens{A}^{k+1},
$$
$$
(\tens{I}+\tens{A})^{\alpha}=\sum_{k=0}^{\infty} \binom{\alpha}{k}\tens{A}^k, \quad \binom{\alpha}{k}=\frac{\alpha(\alpha-1)\cdots(\alpha-k+1)}{k!},\ k=0,1,\ldots, \ \alpha \in \mathbb{R}.
$$

The standard tensor function $\exp(\tens{A})$, $\sin(\tens{A})$, $\cos(\tens{A})$ can be defined to all F-square tensors since the convergence radius of complex valued power series
$$
{\textbf e}^z=\sum_{k=0}^{\infty}\frac{1}{k!}z^k,\quad \sin(z)=\sum_{k=0}^{\infty}\frac{(-1)^{k-1}}{(2k-1)!}z^{2k-1},\quad \cos(z)=\sum_{k=0}^{\infty}\frac{(-1)^{k}}{(2k)!}z^{2k}
$$
are infinity. However, the convergence radius of complex valued power series
$$
{\rm ln}(1+z)=\sum_{k=0}^{\infty}\frac{(-1)^k}{k}z^k, \quad (1+z)^{\alpha}=\sum_{k=0}^{\infty} \binom{\alpha}{k}z^k
$$
are $1$, the corresponding tensor series $\ln(\tens{I}+\tens{A})$ and $(\tens{I}+\tens{A})^{\alpha}$ can only be defined for F-square tensors whose modules of T-eigenvalues are less than $1$.\par
Moreover, for the above tensor functions, we have the relation
$$
\cos^2(\tens{A})+\sin^2(\tens{A})=\tens{I}
$$
for all complex tensors $\tens{A}$ and
$$
\exp(\ln(\tens{I}+\tens{A}))=\tens{I}+\tens{A}
$$
for tensors whose modules of T-eigenvalues are less than $1$.
\begin{corollary}\label{cor2-3} Let $\tens{A},\tens{B}\in\mathbb{C}^{n\times n\times p}$ be two complex tensors commute with each other. Then
$$
\exp(\tens{A})*\exp(\tens{B})=\exp(\tens{B})*\exp(\tens{A})=\exp(\tens{A}+\tens{B}).
$$
\end{corollary}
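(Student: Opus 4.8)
The plan is to transport everything to the block-circulant level, where the statement becomes the classical matrix exponential identity. The two tools that make this work are Theorem \ref{the2-2}, which gives $\bcirc(f(\tens{A}))=f(\bcirc(\tens{A}))$ for any standard tensor function, and Lemma \ref{lem2-3}(3), which says that $\bcirc$ is multiplicative, $\bcirc(\tens{X}*\tens{Y})=\bcirc(\tens{X})\bcirc(\tens{Y})$. Since $\bcirc^{-1}\circ\bcirc$ is the identity, $\bcirc$ is injective, so it suffices to verify the three claimed identities after applying $\bcirc$.

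First I would record the reduction of the hypothesis: by Lemma \ref{lem2-3}(3) and injectivity of $\bcirc$, the relation $\tens{A}*\tens{B}=\tens{B}*\tens{A}$ is equivalent to the matrix commutation relation $\bcirc(\tens{A})\bcirc(\tens{B})=\bcirc(\tens{B})\bcirc(\tens{A})$. Set $M:=\bcirc(\tens{A})$ and $N:=\bcirc(\tens{B})$, two complex $np\times np$ matrices with $MN=NM$. The exponential series $\exp(z)=\sum_{k\ge 0}z^k/k!$ has infinite radius of convergence, so by Theorem \ref{the2-3} and the discussion following it, $\exp(\tens{A})$, $\exp(\tens{B})$ and $\exp(\tens{A}+\tens{B})$ are all well-defined; moreover, Theorem \ref{the2-2} gives $\bcirc(\exp(\tens{A}))=e^{M}$, $\bcirc(\exp(\tens{B}))=e^{N}$, and, using linearity of $\bcirc$, $\bcirc(\exp(\tens{A}+\tens{B}))=e^{M+N}$.

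Now I would invoke the classical fact for matrices: if $MN=NM$, then $e^{M}e^{N}=e^{N}e^{M}=e^{M+N}$. This follows from the Cauchy product of the two absolutely convergent series together with the binomial expansion $(M+N)^k=\sum_{j=0}^{k}\binom{k}{j}M^{j}N^{k-j}$, which is valid precisely because $M$ and $N$ commute. Combining this with the previous paragraph,
$$
\bcirc(\exp(\tens{A})*\exp(\tens{B}))=e^{M}e^{N}=e^{M+N}=\bcirc(\exp(\tens{A}+\tens{B})),
$$
and the same computation with $M$ and $N$ interchanged gives $\bcirc(\exp(\tens{B})*\exp(\tens{A}))=e^{M+N}$ as well. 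Applying $\bcirc^{-1}$ to these equalities yields the desired identities.

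The only genuinely substantive point is the very first reduction, namely that $*$-commutativity of $\tens{A}$ and $\tens{B}$ is faithfully reflected as ordinary commutativity of $\bcirc(\tens{A})$ and $\bcirc(\tens{B})$; after that, the corollary is just the matrix exponential law pushed along the homomorphism $\bcirc$. (One could instead argue entirely inside the tensor algebra, using the Cauchy product of $\sum_j \tens{A}^j/j!$ and $\sum_k \tens{B}^k/k!$ and a tensor binomial theorem $(\tens{A}+\tens{B})^k=\sum_j\binom{k}{j}\tens{A}^j*\tens{B}^{k-j}$ for commuting $\tens{A},\tens{B}$; this is the same proof with the convergence and rearrangement bookkeeping carried out at the tensor level rather than delegated to the matrix case.)
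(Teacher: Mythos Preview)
Your proof is correct. The paper states this corollary without proof, treating it as an immediate consequence of the tensor power series framework just developed; your argument via the $\bcirc$ homomorphism and Theorem~\ref{the2-2} is exactly the reduction the paper's setup is designed to enable, so your approach is in line with the authors' intent.
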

\begin{corollary}\label{cor2-4} Let $\tens{A}\in\mathbb{C}^{n\times n\times p}$ be a complex tensor. Then there exists a complex tensor $\tens{B}\in\mathbb{C}^{n\times n\times p}$, such that
$$
\tens{A}=\exp (\tens{B}),
$$
when $\tens{A}$ is invertible, the tensor equation
$$
\exp(\tens{X})=\tens{A}
$$
has a solution.
\end{corollary}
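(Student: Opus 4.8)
The plan is to reduce everything to the matrix case via the block-circulant machinery, exactly as the T-Jordan canonical form was established. First I would recall from Lemma \ref{lem2-3} and Theorem \ref{the2-2} that $\bcirc(\exp(\tens{B})) = \exp(\bcirc(\tens{B}))$ for any $\tens{B}\in\mathbb{C}^{n\times n\times p}$, so the tensor equation $\exp(\tens{X})=\tens{A}$ is equivalent to the matrix equation $\exp(M)=\bcirc(\tens{A})$ with the extra constraint that the solution $M$ be block circulant (so that $M=\bcirc(\tens{X})$ for a genuine tensor $\tens{X}=\bcirc^{-1}(M)$). Thus the statement becomes: every block circulant matrix has a block circulant logarithm, and if $\bcirc(\tens{A})$ is invertible then such a logarithm exists.

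Second, I would diagonalize by the Fourier transform. By the factorization used throughout Section 3.1, $\bcirc(\tens{A}) = (F_p^H\otimes I_n)\,\mathrm{diag}(A_1,\dots,A_p)\,(F_p\otimes I_n)$. For the unconstrained existence part (the first assertion), note that each $A_i\in\mathbb{C}^{n\times n}$ need not be invertible, so I cannot take a matrix logarithm of $A_i$ directly; instead I should argue at the level of $\bcirc(\tens{A})$ itself. Actually the cleanest route for the first claim is: given any $\tens{A}$, I want $\tens{B}$ with $\exp(\tens{B})=\tens{A}$ — but this is false in general for non-invertible $\tens{A}$ (the exponential of any matrix is invertible!), so the first sentence of the corollary must implicitly assume $\tens{A}$ invertible, and I would state this. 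Granting invertibility of $\tens{A}$, equivalently of $\bcirc(\tens{A})$, equivalently of every block $A_i$, I apply the standard matrix fact (see \cite{Higham1}): every invertible complex matrix $A_i$ has a logarithm $L_i$ with $\exp(L_i)=A_i$, obtained from the Jordan form of $A_i$ via the principal branch on each Jordan block. Set $L := (F_p^H\otimes I_n)\,\mathrm{diag}(L_1,\dots,L_p)\,(F_p\otimes I_n)$. By Lemma \ref{lem2-2}, $L$ is block circulant, so $\tens{B}:=\bcirc^{-1}(L)$ is a well-defined tensor, and by Theorem \ref{the2-2} applied to $f=\exp$ we get $\exp(\tens{B}) = \bcirc^{-1}(\exp(L)) = \bcirc^{-1}\big((F_p^H\otimes I_n)\,\mathrm{diag}(\exp(L_1),\dots,\exp(L_p))\,(F_p\otimes I_n)\big) = \bcirc^{-1}(\bcirc(\tens{A})) = \tens{A}$.

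Third, the second assertion ("when $\tens{A}$ is invertible, $\exp(\tens{X})=\tens{A}$ has a solution") is then literally the tensor $\tens{B}$ just constructed. The main obstacle, and the only place needing genuine care, is verifying that the solution can be chosen \emph{block circulant}: a priori the matrix equation $\exp(M)=\bcirc(\tens{A})$ has many solutions $M$, most of them not block circulant, and one must exhibit a block circulant one. This is handled precisely by building $L$ blockwise in the Fourier-diagonalized coordinates and invoking Lemma \ref{lem2-2} to pull it back to a block circulant matrix; the Fourier conjugation structure guarantees block circulance is preserved. A secondary subtlety is that the principal matrix logarithm of $A_i$ requires $A_i$ to have no eigenvalues on the nonpositive real axis; to cover all invertible $A_i$ one uses instead the existence of \emph{some} (not necessarily principal) logarithm of any invertible matrix, which follows from the Jordan-block computation in Definition \ref{def2-6} together with the fact that each eigenvalue $\lambda\neq 0$ has a complex logarithm. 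I would remark that this also recovers Corollary \ref{cor2-3} in spirit, since $\exp$ of a block circulant logarithm is manipulated entirely inside the commutative algebra of block circulant matrices.
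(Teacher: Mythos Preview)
Your proposal is correct and follows exactly the route the paper's machinery sets up: the paper states Corollary~\ref{cor2-4} without proof, treating it as an immediate consequence of Theorem~\ref{the2-2} and the T-Jordan canonical form, and your argument (Fourier block-diagonalize $\bcirc(\tens{A})$, take a matrix logarithm $L_i$ of each invertible block $A_i$, reassemble via $(F_p^H\otimes I_n)\,\mathrm{diag}(L_i)\,(F_p\otimes I_n)$, and apply Theorem~\ref{the2-2} with $f=\exp$) is precisely that. Your observation that the first sentence of the corollary is ill-posed without the invertibility hypothesis is also correct and worth flagging: $\exp(\tens{B})$ is always invertible since $\bcirc(\exp(\tens{B}))=\exp(\bcirc(\tens{B}))$ has determinant $e^{\mathrm{tr}\,\bcirc(\tens{B})}\neq 0$, so both assertions in the corollary should be read under the standing assumption that $\tens{A}$ is invertible.
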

It should be noticed that the solution of tensor equation $\exp(\tens{X})=\tens{A}$ is not unique. In fact since
$$
{\textbf e}^{2k\pi {\textbf i} }=1,\quad {\textbf i}=\sqrt{-1}, k=0,\pm1,\pm2,\cdots,
$$
so if $\exp(\tens{B})=\tens{A}$, then
$$
\exp(\tens{B}+2k\pi {\textbf i} \tens{I})=\tens{A},\quad k=0,\pm1,\pm2,\cdots.
$$
This theorem also tells us that every third order tensor has its $\alpha$-th root \cite{Higham1}.

\begin{corollary}\label{cor2-5} Let $\tens{A}\in\mathbb{C}^{n\times n\times p}$ be a complex invertible tensor, $\alpha\in \mathbb{C}$ be a non-zero complex number. Then there exists a complex tensor $\tens{B}$ such that
$$
\tens{B}^{\alpha}=\tens{A}.
$$
That is the tensor equation
$$
\tens{X}^{\alpha}=\tens{A}
$$
has a solution.
\end{corollary}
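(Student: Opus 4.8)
The plan is to reduce the tensor equation $\tens{X}^{\alpha}=\tens{A}$ to the previously established existence of a tensor logarithm from Corollary \ref{cor2-4}. First I would invoke Corollary \ref{cor2-4}: since $\tens{A}\in\mathbb{C}^{n\times n\times p}$ is invertible, there exists $\tens{B}_0\in\mathbb{C}^{n\times n\times p}$ with $\exp(\tens{B}_0)=\tens{A}$; this is the tensor analogue of saying that an invertible matrix always has a logarithm, which follows from the T-Jordan canonical form (Theorem \ref{the2-1}) because every invertible Jordan block of $\bcirc(\tens{A})$ has no zero eigenvalue and hence a matrix logarithm in the usual sense. Then I would set $\tens{B}:=\exp\!\left(\tfrac{1}{\alpha}\tens{B}_0\right)$, which is well-defined for every F-square tensor because the exponential power series has infinite convergence radius (as noted after Theorem \ref{the2-3}).

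The key computational step is to verify $\tens{B}^{\alpha}=\tens{A}$. The natural route is to define the $\alpha$-th power of a tensor via $\tens{B}^{\alpha}:=\exp(\alpha\ln(\tens{B}))$ for a suitable branch of $\ln$, and then use the composition rule $\ln(\exp(\tens{C}))=\tens{C}$ on the relevant region together with the homomorphism property $\exp(\alpha\tens{C})=\bigl(\exp(\tens{C})\bigr)^{\alpha}$. Concretely, I would pass through $\bcirc$: by Theorem \ref{the2-2} and Lemma \ref{lem2-3}(3)--(4), $\bcirc(\tens{B}^{\alpha})=\bigl(\bcirc(\tens{B})\bigr)^{\alpha}=\exp\!\bigl(\tfrac{\alpha}{\alpha}\bcirc(\tens{B}_0)\bigr)=\exp\bigl(\bcirc(\tens{B}_0)\bigr)=\bcirc(\tens{A})$, where each matrix identity is the classical fact that $\bigl(e^{M/\alpha}\bigr)^{\alpha}=e^{M}$ for the principal power function. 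Applying $\bcirc^{-1}$ then yields $\tens{B}^{\alpha}=\tens{A}$.

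The main obstacle is making the scalar power function $z\mapsto z^{\alpha}$ rigorous at the tensor level, i.e.\ clarifying exactly which definition of $\tens{B}^{\alpha}$ is being used for non-integer or complex $\alpha$. For integer $\alpha$ the identity $\bigl(\exp(\tfrac1\alpha\tens{B}_0)\bigr)^{\alpha}=\exp(\tens{B}_0)$ is immediate from Corollary \ref{cor2-3} (repeated application, or equivalently $\exp(\tens{C})*\exp(\tens{C})=\exp(2\tens{C})$ and induction). For general $\alpha$ one must fix a branch: define $\tens{B}^{\alpha}:=\exp(\alpha\,\mathcal{L}(\tens{B}))$ where $\mathcal{L}$ is the standard tensor function induced by a chosen branch of the complex logarithm defined on the spectrum of $\bcirc(\tens{B})$ --- this is legitimate because, by construction, $\bcirc(\tens{B})=\exp\bigl(\tfrac1\alpha\bcirc(\tens{B}_0)\bigr)$ has no eigenvalue on the chosen branch cut if we pick the branch compatible with $\tfrac1\alpha\bcirc(\tens{B}_0)$, so $\mathcal{L}(\tens{B})=\tfrac1\alpha\tens{B}_0$ and hence $\tens{B}^{\alpha}=\exp(\tens{B}_0)=\tens{A}$. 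I would also remark, paralleling the remark after Corollary \ref{cor2-4}, that the solution is not unique, since replacing $\tens{B}_0$ by $\tens{B}_0+2k\pi\mathbf{i}\,\tens{I}$ (or choosing a different branch) produces other $\alpha$-th roots.
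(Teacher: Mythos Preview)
Your proposal is correct and follows exactly the paper's approach: invoke Corollary~\ref{cor2-4} to obtain a logarithm $\tens{C}$ (your $\tens{B}_0$) with $\exp(\tens{C})=\tens{A}$, set $\tens{B}=\exp(\tfrac{1}{\alpha}\tens{C})$, and conclude $\tens{B}^{\alpha}=\exp(\tens{C})=\tens{A}$. You are in fact more careful than the paper, which simply asserts $\bigl(\exp(\tfrac{1}{\alpha}\tens{C})\bigr)^{\alpha}=\exp(\tens{C})$ without discussing how the $\alpha$-th power is defined for non-integer $\alpha$; your treatment via a branch of the logarithm and the $\bcirc$ reduction fills precisely that gap.
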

\begin{proof}
There exists a complex tensor $\tens{C}$, such that $\tens{A}=\exp(\tens{C})$. Let
$$
\tens{B}=\exp\left(\frac{1}{\alpha}\tens{C}\right).
$$
Then we have $\tens{B}^{\alpha}=\left(\exp\left(\frac{1}{\alpha}\tens{C}\right)\right)^{\alpha}=
\exp(\tens{C})=\tens{A}$.
\end{proof}
Also, the $\alpha$-th root of $\tens{A}$ is not unique. In fact, if $\tens{B}^{\alpha}=\tens{A}$, then
$$
\left(\tens{B}*\exp\left(\frac{2k\pi {\textbf i} }{\alpha}\right)\right)^{\alpha}=\tens{B}^{\alpha}*\exp(2k\pi {\textbf i})=\tens{B}^{\alpha}=\tens{A}.
$$
For irreversible tensors, generally, they may not have its $\alpha$-th root.

\subsection{Commutative tensor family}
The set of diagonalizable matrices is an important class of matrices in the linear algebra. In this subsection, we talks about a special class of tensor, which is the F-diagonalizable tensor.
\begin{definition}\label{def2-11} {\rm \cite{Lund1}} {\rm (F-diagonalizable tensor)} We call a tensor $\tens{A}\in \mathbb{C}^{n\times n\times p}$, which has the Jordan decomposition $\tens{A}=\tens{P}^{-1}*\tens{J}*\tens{P}$, is a F-diagonalizable tensor if its Jordan canonical form $\tens{J}\in \mathbb{C}^{n\times n\times p}$ is a F-diagonal tensor, i.e. all the frontal slices of $\tens{J}$ are diagonal matrices.
\end{definition}
\begin{definition}\label{def2-12} {\rm (T-Commutative)} Let $\tens{A}$, $\tens{B}\in\mathbb{C}^{n\times n\times p}$ be two tensors. We call tensor $\tens{A}$ commutes with $\tens{B}$  if
\begin{equation}
[\tens{A},\tens{B}]:=\tens{A}*\tens{B}-\tens{B}*\tens{A}=\tens{O}.
\end{equation}
\end{definition}
For matrix cases, we have the following lemma.
\begin{lemma}\label{lem2-6} {\rm \cite{Horn1}}
Let $A$, $B\in\mathbb{C}^{n\times n}$ be two diagonal matrices. If $A$ commutes with $B$ and $A$ is diagonalizable by the matrix $P$,
$$
A=P^{-1}DP,
$$
where $D\in\mathbb{C}^{n\times n}$ is a diagonal tensor, then $B$ can also be diagonalized by matrix $P$, that is
$$
B=P^{-1}D'P,
$$
where $D'\in\mathbb{C}^{n\times n}$ is also a diagonal matrix.
\end{lemma}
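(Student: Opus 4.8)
The plan is to transport the commutation relation $AB=BA$ through the conjugation by $P$ and then exploit the rigidity of a diagonal matrix; throughout I read the hypothesis "$A$, $B$ diagonal" as "$A$, $B$ diagonalizable", which is what makes the statement have content. Write $D={\rm diag}(\lambda_1,\dots,\lambda_n)$ and put $C:=PBP^{-1}$, so that proving the lemma is the same as proving that $C$ is diagonal: then $B=P^{-1}CP$ already exhibits the desired form with $D'=C$. From $A=P^{-1}DP$, $B=P^{-1}CP$ and $AB=BA$ one gets $P^{-1}(DC)P=AB=BA=P^{-1}(CD)P$, hence $DC=CD$. This first step is purely formal.

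The second step is an entrywise comparison in $DC=CD$. Because $D$ is diagonal, $(DC)_{ij}=\lambda_i C_{ij}$ and $(CD)_{ij}=\lambda_j C_{ij}$, so $(\lambda_i-\lambda_j)C_{ij}=0$ for all $i,j$; therefore $C_{ij}=0$ whenever $\lambda_i\neq\lambda_j$. If $A$ has pairwise distinct eigenvalues, this already forces $C$ to be diagonal, and the lemma follows immediately with $D'=C$ and the same $P$.

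The main obstacle — and the point at which the statement must be read with some care — is the case of repeated eigenvalues of $A$: then the previous argument only shows that $C$ is block diagonal, with one block $C_k$ for each distinct eigenvalue of $A$, and a generic $P$ diagonalizing $A$ need not diagonalize $B$. To finish I would use that $B$ is diagonalizable: its minimal polynomial has only simple roots, and that same polynomial annihilates each block $C_k$ (since it annihilates $C$), so each $C_k$ is diagonalizable, say $C_k=Q_k D''_k Q_k^{-1}$ with $D''_k$ diagonal. Set $Q:={\rm diag}(Q_1,\dots,Q_s)$ and $\widetilde P:=Q^{-1}P$. Since $D$ is a scalar on each block, $Q$ commutes with $D$, so $\widetilde P^{-1}D\widetilde P=P^{-1}(QDQ^{-1})P=P^{-1}DP=A$, while $\widetilde P^{-1}D''\widetilde P=P^{-1}(QD''Q^{-1})P=P^{-1}CP=B$ with $D''={\rm diag}(D''_1,\dots,D''_s)$ diagonal. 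Thus $A$ and $B$ are simultaneously diagonalized; the literal "same $P$" holds exactly when $A$ has simple spectrum, and otherwise $P$ is only modified inside the eigenspaces of $A$, which does not disturb the diagonalization of $A$. Everything here is elementary linear algebra; the only step that needs a moment's thought is that diagonalizability of $B$ descends to each of its diagonal blocks $C_k$.
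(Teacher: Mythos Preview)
The paper does not supply its own proof of this lemma: it is simply quoted from Horn and Johnson with the citation \cite{Horn1} and used as a black box in the proof of Theorem~\ref{the2-6}. So there is nothing in the paper to compare your argument against.

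Your argument itself is correct and is essentially the standard one. You correctly read ``diagonal'' as ``diagonalizable'', transport the commutation through $P$ to obtain $DC=CD$, and deduce that $C=PBP^{-1}$ is block diagonal along the eigenspaces of $A$; you then use the diagonalizability of $B$ (via its minimal polynomial) to diagonalize each block and adjust $P$ within each eigenspace. You are also right to flag that the lemma as literally written --- ``the same $P$'' --- is false without further hypotheses: if $A$ has a repeated eigenvalue (e.g.\ $A=I$), then a generic $P$ diagonalizing $A$ need not diagonalize $B$. What is actually true, and what the paper needs for Theorem~\ref{the2-6}, is that \emph{some} common $P$ exists, which is exactly what your construction of $\widetilde P=Q^{-1}P$ provides. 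One small overstatement: saying the literal ``same $P$'' holds \emph{exactly} when $A$ has simple spectrum is slightly off --- it can happen to hold for particular pairs $(A,B)$ even with repeated eigenvalues --- but your intended meaning (that simple spectrum is the only case in which it holds for \emph{every} commuting diagonalizable $B$) is correct.
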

If tensors are commutative with T-product and both can be F-diagonalized, then they can be F-diagonalized by the same invertible tensor $\tens{P}$. In order to prove this result, we shall use the above Lemma \ref{lem2-4} and \ref{lem2-6}.
\begin{theorem}\label{the2-6} {\rm (Diagonalizable simultaneously)} Let $\tens{A}$, $\tens{B}\in\mathbb{C}^{n\times n\times p}$ be two F-diagonalizable tensors. If $\tens{A}$ commutes with $\tens{B}$ and $\tens{A}$ is F-diagonalizable by tensor $\tens{P}\in\mathbb{C}^{n\times n\times p}$,
$$
\tens{A}=\tens{P}^{-1}*\tens{D}*\tens{P},
$$
where $\tens{D}\in\mathbb{C}^{n\times n\times p}$ is a F-diagonal tensor, then $\tens{B}$ can also be diagonalized by tensor $\tens{P}$, that is
$$
\tens{B}=\tens{P}^{-1}*\tens{D}'*\tens{P},
$$
where $\tens{D}'\in\mathbb{C}^{n\times n\times p}$ is also a F-diagonal tensor.
\end{theorem}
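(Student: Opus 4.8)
The plan is to reduce the tensor statement to the matrix statement of Lemma~\ref{lem2-6} by passing through the block-diagonalization of block circulant matrices that underlies the T-product, exactly as in the proof of Theorem~\ref{the2-1}. First I would apply $\bcirc$ to the hypotheses. Since $\tens{A}$ commutes with $\tens{B}$, Lemma~\ref{lem2-3}(3) gives $\bcirc(\tens{A})\bcirc(\tens{B}) = \bcirc(\tens{A}*\tens{B}) = \bcirc(\tens{B}*\tens{A}) = \bcirc(\tens{B})\bcirc(\tens{A})$, so the two block circulant matrices commute. By Lemma~\ref{lem2-4} (together with the Fourier block-diagonalization recalled in Theorem~\ref{the2-1}), write
$$
\bcirc(\tens{A}) = (F_p \otimes I_n)\,{\rm diag}(A_1,\dots,A_p)\,(F_p^H \otimes I_n), \qquad
\bcirc(\tens{B}) = (F_p \otimes I_n)\,{\rm diag}(B_1,\dots,B_p)\,(F_p^H \otimes I_n).
$$
Because conjugation by the fixed unitary $F_p \otimes I_n$ is an algebra isomorphism, the commutation $\bcirc(\tens{A})\bcirc(\tens{B}) = \bcirc(\tens{B})\bcirc(\tens{A})$ is equivalent to $A_i B_i = B_i A_i$ for every $i = 1,2,\dots,p$.

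Next I would unpack what F-diagonalizability means at the level of the blocks. By Definition~\ref{def2-11}, $\tens{A} = \tens{P}^{-1}*\tens{D}*\tens{P}$ with $\tens{D}$ F-diagonal; applying $\bcirc$ and using Lemma~\ref{lem2-4} again, each block $A_i$ is conjugate to a diagonal matrix $D_i$, i.e. $A_i = P_i^{-1} D_i P_i$ where $P_i$ is the $i$-th block of $(F_p^H\otimes I_n)\bcirc(\tens{P})(F_p\otimes I_n)$ and $D_i$ the $i$-th block of the block-diagonalization of $\bcirc(\tens{D})$; in particular each $A_i$ is diagonalizable. Similarly F-diagonalizability of $\tens{B}$ says each $B_i$ is diagonalizable. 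Now apply Lemma~\ref{lem2-6} blockwise: for each $i$, $A_i$ and $B_i$ commute, $A_i$ is diagonalized by $P_i$, and $B_i$ is diagonalizable, hence $B_i = P_i^{-1} D_i' P_i$ with $D_i'$ diagonal. Reassembling, ${\rm diag}(B_1,\dots,B_p) = {\rm diag}(P_1,\dots,P_p)^{-1}\,{\rm diag}(D_1',\dots,D_p')\,{\rm diag}(P_1,\dots,P_p)$, and conjugating back by $F_p\otimes I_n$ gives $\bcirc(\tens{B}) = \bcirc(\tens{P})^{-1}\bcirc(\tens{D}')\bcirc(\tens{P})$, where $\tens{D}' := \bcirc^{-1}\big((F_p\otimes I_n){\rm diag}(D_1',\dots,D_p')(F_p^H\otimes I_n)\big)$ is F-diagonal by Lemma~\ref{lem2-4}. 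Applying $\bcirc^{-1}$ and Lemma~\ref{lem2-3}(3) yields $\tens{B} = \tens{P}^{-1}*\tens{D}'*\tens{P}$, as desired.

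The only genuine subtlety — and the step I would be most careful about — is making sure the \emph{same} conjugating block $P_i$ that diagonalizes $A_i$ really comes from the \emph{given} invertible tensor $\tens{P}$, rather than from some other choice of diagonalizing tensor; this is what lets us conclude $\tens{B} = \tens{P}^{-1}*\tens{D}'*\tens{P}$ with the prescribed $\tens{P}$. This is immediate once one observes that $\bcirc$ and the fixed Fourier conjugation are bijective ring homomorphisms, so the decomposition $\tens{A} = \tens{P}^{-1}*\tens{D}*\tens{P}$ transports verbatim to $A_i = P_i^{-1} D_i P_i$ with $P_i$ determined by $\tens{P}$. A secondary point worth a sentence is that Lemma~\ref{lem2-6} as stated requires $D$ diagonal; here $D_i$ is diagonal by construction since $\tens{D}$ is F-diagonal, so the hypothesis is met. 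Everything else is bookkeeping with the isomorphism, and no new ideas beyond those already used for the T-Jordan canonical form are needed.
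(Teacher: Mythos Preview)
Your proposal is correct and follows essentially the same route as the paper: transfer the commutation and the F-diagonalizations to matrices via $\bcirc$ and the Fourier conjugation, invoke Lemma~\ref{lem2-6}, and then fold back. The only cosmetic difference is that you apply Lemma~\ref{lem2-6} blockwise to each $n\times n$ Fourier block $A_i,B_i$, whereas the paper applies it once to the full $np\times np$ matrices $\bcirc(\tens{A}),\bcirc(\tens{B})$; the two are equivalent and neither requires any additional idea.
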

\begin{proof}The equation
$\tens{A}*\tens{B}=\tens{B}*\tens{A}$ is equivalent to
$$
\bcirc(\tens{A})\bcirc(\tens{B})=\bcirc(\tens{B})\bcirc(\tens{A}).
$$
Since $\tens{A}=\tens{P}^{-1}*\tens{D}*\tens{P}$, we have
$$
\begin{aligned}
\bcirc(\tens{A})&=(\bcirc(\tens{P}))^{-1}
\begin{bmatrix}
D^{(1)} &  D^{(p)}  & D^{(p-1)} & \cdots &  D^{(2)}\\

D^{(2)} &  D^{(1)}  &  D^{(p)} & \cdots &  D^{(3)}\\

\vdots  & \ddots& \ddots & \ddots & \vdots\\

D^{(p)} & D^{(p-1)}  &  \ddots & D^{(2)} &  D^{(1)}\\
\end{bmatrix}
(\bcirc(\tens{P}))\\
&=(\bcirc(\tens{P}))^{-1}(F_p\otimes I_n)
\begin{bmatrix}
D_1&&&\\
&D_2&&\\
&&\ddots&\\
&&&D_p
\end{bmatrix}(F_p^H\otimes I_n)\bcirc(\tens{P})\\
&=\left((F_p^H\otimes I_n)\bcirc(\tens{P})\right)^{-1}
\begin{bmatrix}
D_1&&&\\
&D_2&&\\
&&\ddots&\\
&&&D_p
\end{bmatrix}(F_p^H\otimes I_n)\bcirc(\tens{P})\\
&=P^{-1}\begin{bmatrix}
D_1&&&\\
&D_2&&\\
&&\ddots&\\
&&&D_p
\end{bmatrix} P,
\end{aligned}
$$
where $P=(F_p^H\otimes I_n)\bcirc(\tens{P})$. \par
Since $D^{(i)}$ are all diagonal matrices, by Lemma \ref{lem2-4}, we have that $D_i$ are all diagonal matrices. By the same kind of method,
$$
\begin{aligned}
\bcirc(\tens{B})&=(\bcirc(\tens{Q}))^{-1}
\begin{bmatrix}
D'^{(1)} &  D'^{(p)}  & D'^{(p-1)} & \cdots &  D'^{(2)}\\

D'^{(2)} &  D'^{(1)}  &  D'^{(p)} & \cdots &  D'^{(3)}\\

\vdots  & \ddots& \ddots & \ddots & \vdots\\

D'^{(p)} & D'^{(p-1)}  &  \ddots & D'^{(2)} &  D'^{(1)}\\
\end{bmatrix}
(\bcirc(\tens{Q}))\\
&=(\bcirc(\tens{Q}))^{-1}(F_p^H\otimes I_n)
\begin{bmatrix}
D'_1&&&\\
&D'_2&&\\
&&\ddots&\\
&&&D'_p
\end{bmatrix}(F_p\otimes I_n)\bcirc(\tens{Q})\\
&=\left((F_p\otimes I_n)\bcirc(\tens{Q})\right)^{-1}
\begin{bmatrix}
D'_1&&&\\
&D'_2&&\\
&&\ddots&\\
&&&D'_p
\end{bmatrix}(F_p\otimes I_n)\bcirc(\tens{Q})\\
&=Q^{-1}\begin{bmatrix}
D'_1&&&\\
&D'_2&&\\
&&\ddots&\\
&&&D'_p
\end{bmatrix} Q,
\end{aligned}
$$
where $Q=(F_p\otimes I_n)\bcirc(\tens{Q})$. \par
 We have $\bcirc(\tens{A})$ and $\bcirc(\tens{B})$ are both diagonalizable matrices, and $\bcirc(\tens{A})$ and $\bcirc(\tens{B})$ commutes with each other, i.e., $\bcirc(\tens{A})\bcirc(\tens{B})=\bcirc(\tens{B})\bcirc(\tens{A})$. \par
By Lemma \ref{lem2-6}, we have $\bcirc(\tens{B})$ can also be diagonalized by the matrix $P=(F_p\otimes I)\bcirc(\tens{P})$. That is,
$$
\begin{aligned}
\bcirc(\tens{B})&=P^{-1}\begin{bmatrix}
D'_1&&&\\
&D'_2&&\\
&&\ddots&\\
&&&D'_p
\end{bmatrix} P\\
&=\left((F_p\otimes I_n)\bcirc(\tens{P})\right)^{-1}
\begin{bmatrix}
D'_1&&&\\
&D'_2&&\\
&&\ddots&\\
&&&D'_p
\end{bmatrix}(F_p\otimes I_n)\bcirc(\tens{P})\\
&=(\bcirc(\tens{P}))^{-1}
\begin{bmatrix}
D'^{(1)} &  D'^{(p)}  & D'^{(p-1)} & \cdots &  D'^{(2)}\\

D'^{(2)} &  D'^{(1)}  &  D'^{(p)} & \cdots &  D'^{(3)}\\

\vdots  & \ddots& \ddots & \ddots & \vdots\\

D'^{(p)} & D'^{(p-1)}  &  \ddots & D'^{(2)} &  D'^{(1)}\\
\end{bmatrix}
(\bcirc(\tens{P}))\\
&=\bcirc(\tens{P})^{-1}\bcirc(\tens{D})\bcirc(\tens{P}).
\end{aligned}
$$
This is equivalent to $$
\tens{B}=\tens{P}^{-1}*\tens{D}'*\tens{P},
$$
where $\tens{D}'\in\mathbb{C}^{n\times n\times p}$ is also a F-diagonal tensor.
\end{proof}
By induction, we have the following generalized case.

\begin{theorem}\label{the2-7}  {\rm (Commutative tensor family)} Let $\tens{A}_1$, $\tens{A}_2$,$\cdots$, $\tens{A}_l\in\mathbb{C}^{n\times n\times p}$ be a family of  complex tensors commute with each other with T-product. If $\tens{A}_1$ is F-diagonalizable by tensor $\tens{P}\in\mathbb{C}^{n\times n\times p}$:
$$
\tens{A}_1=\tens{P}^{-1}*\tens{D}_1*\tens{P},
$$
where $\tens{D}_1\in\mathbb{C}^{n\times n\times p}$ is a F-diagonal tensor, then all the tensors $\tens{A}_1$, $\tens{A}_2$,$\ldots$, $\tens{A}_l$ are F-diagonalizable and can also be diagonalized by tensor $\tens{P}$, that is
$$
\tens{A}_i=\tens{P}^{-1}*\tens{D}_i*\tens{P},\  i=1,2,\ldots, l,
$$
where $\tens{D}_i\in\mathbb{C}^{n\times n\times p}$ is a F-diagonal tensor.
\end{theorem}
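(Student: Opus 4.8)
The plan is to prove the statement by induction on the size $l$ of the family, with Theorem~\ref{the2-6} serving as the whole engine of the inductive step. The case $l=1$ is vacuous and the case $l=2$ is exactly Theorem~\ref{the2-6}. Assume, then, that the conclusion holds for every pairwise-commuting family of $l-1$ F-diagonalizable tensors, and let $\tens{A}_1,\tens{A}_2,\ldots,\tens{A}_l$ be pairwise commuting with $\tens{A}_1=\tens{P}^{-1}*\tens{D}_1*\tens{P}$ for some F-diagonal tensor $\tens{D}_1$.

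First I would apply the inductive hypothesis to the subfamily $\tens{A}_1,\ldots,\tens{A}_{l-1}$, which is again pairwise commuting and whose first member is F-diagonalized by $\tens{P}$; this yields F-diagonal tensors $\tens{D}_1,\ldots,\tens{D}_{l-1}$ with $\tens{A}_i=\tens{P}^{-1}*\tens{D}_i*\tens{P}$ for $i=1,\ldots,l-1$. The only remaining tensor is $\tens{A}_l$, and here I would simply observe that $\tens{A}_l$ commutes with $\tens{A}_1$, that both are F-diagonalizable by hypothesis, and that $\tens{A}_1=\tens{P}^{-1}*\tens{D}_1*\tens{P}$; Theorem~\ref{the2-6} applied to the pair $(\tens{A}_1,\tens{A}_l)$ then produces a F-diagonal tensor $\tens{D}_l$ with $\tens{A}_l=\tens{P}^{-1}*\tens{D}_l*\tens{P}$, closing the induction. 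Equivalently, one can dispense with the induction altogether and apply Theorem~\ref{the2-6} once to each pair $(\tens{A}_1,\tens{A}_i)$, $i=2,\ldots,l$, since Theorem~\ref{the2-6} always returns the diagonalizer of $\tens{A}_1$ as the common transformation.

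If one prefers to argue at the level of block-circulant matrices, as elsewhere in the paper, the same steps transport through $\bcirc$: the hypotheses say that $\bcirc(\tens{A}_1),\ldots,\bcirc(\tens{A}_l)$ are pairwise-commuting diagonalizable matrices with $\bcirc(\tens{A}_1)=\bcirc(\tens{P})^{-1}\bcirc(\tens{D}_1)\bcirc(\tens{P})$; conjugating by $F_p\otimes I_n$ as in the proof of Theorem~\ref{the2-6} block-diagonalizes each $\bcirc(\tens{A}_i)$ into blocks that remain pairwise commuting and diagonalizable, one invokes the iterated matrix form of Lemma~\ref{lem2-6} blockwise, and Lemma~\ref{lem2-4} guarantees that pulling the common diagonalizer back produces a tensor of the form $\bcirc(\tens{P})$ so that the tensor identities $\tens{A}_i=\tens{P}^{-1}*\tens{D}_i*\tens{P}$ make sense; matching this transformation with the given $\tens{P}$ is forced by the first block identity.

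The hard part is conceptual rather than computational: it is the requirement that one and the same transformation $\tens{P}$ — the one diagonalizing $\tens{A}_1$ — simultaneously F-diagonalizes all of $\tens{A}_1,\ldots,\tens{A}_l$, as opposed to each $\tens{A}_i$ owning its own diagonalizer. In the inductive scheme this is precisely what Theorem~\ref{the2-6} supplies pairwise, so the difficulty is discharged by appeal to the already-proved $l=2$ case; the residual work is only the routine check that the $\tens{P}$ furnished for the subfamily and the $\tens{P}$ used with $\tens{A}_l$ are literally the same tensor, which holds because both are characterized as the F-diagonalizer of $\tens{A}_1$.
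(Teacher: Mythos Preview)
Your proposal matches the paper's own argument exactly: the paper gives no proof beyond the single sentence ``By induction, we have the following generalized case'' immediately preceding the theorem, and your write-up spells out precisely that induction (or, equivalently, the pairwise application of Theorem~\ref{the2-6} to each $(\tens{A}_1,\tens{A}_i)$).

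One caveat worth flagging, though it concerns the theorem statement rather than your strategy: you write that $\tens{A}_l$ is ``F-diagonalizable by hypothesis,'' but the theorem as printed assumes F-diagonalizability only of $\tens{A}_1$. Theorem~\ref{the2-6} (and the underlying Lemma~\ref{lem2-6}) needs \emph{both} tensors to be F-diagonalizable, and without that assumption the conclusion is false (take $\tens{A}_1=\tens{I}$ and $\tens{A}_2$ any non-F-diagonalizable tensor). The paper evidently intends all $\tens{A}_i$ to be F-diagonalizable, consistent with the section heading and with Theorem~\ref{the2-6}; with that implicit hypothesis your induction goes through, and your remark that the diagonalizer returned at each step is literally the given $\tens{P}$ is correct because Theorem~\ref{the2-6} outputs the \emph{same} $\tens{P}$ that was fed in for $\tens{A}_1$.
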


\subsection{T-characteristic and T-minimal polynomial}
In linear algebra, the characteristic polynomial of a square matrix is a polynomial which is invariant under matrix similarity relationship and has the eigenvalues as roots.  We can also introduce the concept of T-characteristic polynomial as follows:
\begin{definition}\label{def2-13} {\rm(T-characteristic polynomial)} Let $\tens{A}\in \mathbb{C}^{n\times n\times p}$ be a complex tensor, if $\bcirc(\tens{A})$ can be Fourier block diagonalized as
$$
\bcirc(\tens{A})=(F_p^H\otimes I_n)
\begin{bmatrix}
D_1&&&\\
&D_2&&\\
&&\ddots&\\
&&&D_p
\end{bmatrix}(F_p\otimes I_n),
$$
then the T-characteristic polynomial  $P_{T}(x)$ has the expression
\begin{equation}
P_{T}(x):={\rm LCM}(P_{D_1}(x),P_{D_2}(x),\cdots, P_{D_p}(x)),
\end{equation}
where `${\rm LCM}$' means the least common multiplier, $P_{D_i}(x)\ (i\in \{1,2,\ldots, p\})$ is the characteristic polynomial of the matrix $D_i$.
\end{definition}
The Cayley-Hamilton theorem states that every square matrix $A\in \mathbb{C}^{n\times n}$ over a commutative ring (such as the real or complex field) satisfies its own characteristic equation:
$$
p_A(\lambda)={\rm det}(\lambda I_n-A),
$$
which is an important theorem in matrix theories \cite{Horn1}. For the T-characteristic polynomial of tensors, we also have the following important theorem.
\begin{theorem}\label{the2-9} {\rm (Caylay-Hamilton Theorem)}
Let $\tens{A}\in \mathbb{C}^{n\times n\times p}$ be a complex tensor, $P_{T}(x)$ be the T-characteristic polynomial of $\tens{A}$. Then $\tens{A}$ satisfies the T-characteristic polynomial $P_{T}(x)$, which means
\begin{equation}
P_{T}(\tens{A})=\tens{O}.
\end{equation}
\end{theorem}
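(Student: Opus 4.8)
The plan is to transport the claimed identity into the block circulant picture, Fourier block diagonalize, and reduce to the classical matrix Cayley--Hamilton theorem applied to each block $D_k$. First I would observe that $P_T(x)=\sum_{i=0}^{m}c_ix^i$ is a polynomial scalar function, so by part (4) of Lemma \ref{lem2-5} the standard tensor function $P_T(\tens{A})$ coincides with the $*$-polynomial $\sum_{i=0}^{m}c_i\tens{A}^i$. Applying $\bcirc$, using its linearity together with Lemma \ref{lem2-3}(4) (which gives $\bcirc(\tens{A}^i)=\bcirc(\tens{A})^i$) and $\bcirc(\tens{I}_{nnp})=I_{np}$, I obtain
$$
\bcirc\big(P_T(\tens{A})\big)=\sum_{i=0}^{m}c_i\,\bcirc(\tens{A})^i=P_T\big(\bcirc(\tens{A})\big).
$$
Since $\bcirc$ has the left inverse $\bcirc^{-1}$, it is injective, so it suffices to prove $P_T(\bcirc(\tens{A}))=O$.

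Next I would invoke the Fourier block diagonalization $\bcirc(\tens{A})=(F_p^H\otimes I_n)\,{\rm diag}(D_1,\dots,D_p)\,(F_p\otimes I_n)$ from the statement. Because $(F_p\otimes I_n)(F_p^H\otimes I_n)=I_{np}$, adjacent factors telescope and $\bcirc(\tens{A})^i=(F_p^H\otimes I_n)\,{\rm diag}(D_1^i,\dots,D_p^i)\,(F_p\otimes I_n)$ for every $i$; summing against the coefficients $c_i$ gives
$$
P_T\big(\bcirc(\tens{A})\big)=(F_p^H\otimes I_n)\,{\rm diag}\big(P_T(D_1),\dots,P_T(D_p)\big)\,(F_p\otimes I_n),
$$
which vanishes if and only if $P_T(D_k)=O$ for every $k=1,2,\dots,p$.

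Finally I would use the definition $P_T(x)={\rm LCM}(P_{D_1}(x),\dots,P_{D_p}(x))$: for each fixed $k$ the ordinary characteristic polynomial $P_{D_k}(x)$ divides $P_T(x)$ in $\mathbb{C}[x]$, so we may write $P_T(x)=q_k(x)P_{D_k}(x)$ for some polynomial $q_k$. The classical Cayley--Hamilton theorem applied to the square matrix $D_k\in\mathbb{C}^{n\times n}$ yields $P_{D_k}(D_k)=0$, hence $P_T(D_k)=q_k(D_k)P_{D_k}(D_k)=0$. Chaining the three reductions gives $\bcirc(P_T(\tens{A}))=O$, i.e. $P_T(\tens{A})=\tens{O}$.

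I do not expect a genuinely hard step here; the proof is essentially a diagram chase through the ring isomorphism $\tens{A}\mapsto\bcirc(\tens{A})$ and the conjugation by $F_p\otimes I_n$. The only points that need care are verifying that polynomial evaluation commutes with both of these operations (handled by Lemma \ref{lem2-3} and the telescoping above) and taking the ${\rm LCM}$ as an honest monic polynomial in $\mathbb{C}[x]$ so that the divisibility $P_{D_k}\mid P_T$ used in the last step holds literally.
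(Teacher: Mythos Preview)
Your proof is correct and follows essentially the same route as the paper: apply $\bcirc$ to $P_T(\tens{A})$, commute polynomial evaluation past $\bcirc$ and the Fourier block diagonalization, then kill each block via the classical Cayley--Hamilton theorem together with the divisibility $P_{D_k}\mid P_T$. If anything, your write-up is more explicit than the paper's, which simply cites Theorem~\ref{the2-2} for the first commutation and asserts $P_T(D_i)=O$ without spelling out the $q_k(D_k)P_{D_k}(D_k)$ factorization.
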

\begin{proof}
Since $P_{T}(\tens{A})$ is a tensor in $\mathbb{C}^{n\times n\times p}$, we operate `$\bcirc$' on it
$$
\begin{aligned}
\bcirc(P_{T}(\tens{A}))&=P_{T}(\bcirc(\tens{A}))\\
&=P_{T}\left(
(F_p^H\otimes I_n)
\begin{bmatrix}
D_1&&&\\
&D_2&&\\
&&\ddots&\\
&&&D_p
\end{bmatrix}(F_p\otimes I_n)
\right)\\
&=(F_p^H\otimes I_n)\begin{bmatrix}
P_{T}(D_1)&&&\\
&P_{T}(D_2)&&\\
&&\ddots&\\
&&&P_{T}(D_p)
\end{bmatrix}(F_p\otimes I_n)\\
&=(F_p^H\otimes I_n)\begin{bmatrix}
O&&&\\
&O&&\\
&&\ddots&\\
&&&O
\end{bmatrix}(F_p\otimes I_n)\\
&=O,
\end{aligned}
$$
the first step is due to Theorem \ref{the2-2}. By the $(4)$ of Lemma \ref{lem2-5}, we get $P_{T}(\tens{A})=\tens{O}$.
\end{proof}

Similarly, we define the T-minimal polynomial of $\tens{A}$ as follows.
\begin{definition}\label{def2-14} {\rm (Minimal polynomial)}
Let $\tens{A}\in \mathbb{C}^{n\times n\times p}$ be a complex tensor, if $\bcirc(\tens{A})$ can be Fourier block diagonalized as
$$
\bcirc(\tens{A})=(F_p^H\otimes I_n)
\begin{bmatrix}
D_1&&&\\
&D_2&&\\
&&\ddots&\\
&&&D_p
\end{bmatrix}(F_p\otimes I_n),
$$
then the T-minimal polynomial  $M_{T}(x)$ is defines as
\begin{equation}
M_{T}(x):={\rm LCM}(M_{D_1}(x),M_{D_2}(x),\cdots, M_{D_p}(x)),
\end{equation}
where `${\rm LCM}$' means the least common multiplier, $M_{D_i}(x)$ is the minimal polynomial of matrix $D_i$, $i\in \{1,2,\ldots, p\}$.
\end{definition}
For the T-minimal polynomial, we have the similar result as follows.
\begin{theorem}\label{the2-10}
Let $\tens{A}\in \mathbb{C}^{n\times n\times p}$ be a complex tensor, $M_{T}(x)$ be the T-minimal polynomial of $\tens{A}$. Then $\tens{A}$ satisfies the T-minimal polynomial $M_{T}(x)$, which means
\begin{equation}
M_{T}(\tens{A})=\tens{O}.
\end{equation}
\end{theorem}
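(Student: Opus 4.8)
The plan is to mimic exactly the argument used for the Cayley--Hamilton theorem (Theorem \ref{the2-9}), since the T-minimal polynomial is defined by the very same device: a least common multiple of scalar polynomials attached to the Fourier blocks $D_1,\dots,D_p$ of $\bcirc(\tens{A})$. First I would apply $\bcirc$ to the tensor $M_{T}(\tens{A})$ and use Theorem \ref{the2-2} (equivalently, part (4) of Lemma \ref{lem2-5} applied to the polynomial $M_T$) to pull the polynomial through the block-circulant/Fourier factorization, obtaining
$$
\bcirc(M_{T}(\tens{A}))=(F_p^H\otimes I_n)\,{\rm diag}\big(M_{T}(D_1),\dots,M_{T}(D_p)\big)\,(F_p\otimes I_n).
$$
So the whole statement reduces to showing that each diagonal block $M_{T}(D_i)$ vanishes.

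Second, I would invoke the classical matrix fact that a matrix is annihilated by any polynomial divisible by its minimal polynomial. By Definition \ref{def2-14}, $M_{T}(x)={\rm LCM}(M_{D_1}(x),\dots,M_{D_p}(x))$, so for each fixed $i$ the scalar polynomial $M_{D_i}(x)$ divides $M_{T}(x)$; write $M_{T}(x)=q_i(x)M_{D_i}(x)$ for some polynomial $q_i$. Then $M_{T}(D_i)=q_i(D_i)M_{D_i}(D_i)=q_i(D_i)\cdot O=O$. Hence the block-diagonal matrix in the display is the zero matrix, so $\bcirc(M_{T}(\tens{A}))=O$, and therefore $M_{T}(\tens{A})=\tens{O}$ because $\bcirc$ is injective (its inverse $\bcirc^{-1}$ is well-defined, as recalled in the Preliminaries).

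There is really no serious obstacle here; the result is a routine transplantation of the matrix minimal-polynomial property through the Fourier block-diagonalization, and everything needed has already been set up. The only point that deserves a sentence of care is the justification for passing the polynomial $M_T$ through the factorization of $\bcirc(\tens{A})$: this uses that for a scalar polynomial $f$ one has $f\big((F_p^H\otimes I_n)\,{\rm diag}(D_i)\,(F_p\otimes I_n)\big)=(F_p^H\otimes I_n)\,{\rm diag}(f(D_i))\,(F_p\otimes I_n)$, which is immediate since conjugation commutes with polynomial evaluation and a polynomial of a block-diagonal matrix is block-diagonal blockwise — exactly the step already performed in the proof of Theorem \ref{the2-9}. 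Thus the proof is essentially two lines: reduce to the blocks via Theorem \ref{the2-2}, then apply divisibility of $M_T$ by each $M_{D_i}$.
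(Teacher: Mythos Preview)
Your proposal is correct and follows exactly the approach the paper intends: the paper does not write out a separate proof for Theorem \ref{the2-10} but introduces it with ``we have the similar result as follows'' immediately after the Cayley--Hamilton argument, signalling that the same Fourier block-diagonalization reduction applies verbatim with $M_{D_i}$ in place of $P_{D_i}$. Your added sentence making the divisibility $M_{D_i}\mid M_T$ explicit is the only detail that distinguishes the minimal-polynomial case from the characteristic-polynomial case, and it is handled correctly.
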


We can judge whether a tensor can be F-diagonalized by the roots of its T-minimal polynomial.
\begin{corollary}\label{cor2-11}
The tensor $\tens{A}$ can be F-diagonalized if and only if the T-minimal polynomial $M_{T}(x)$ of the tensor $\tens{A}$ has no multiple roots.
\end{corollary}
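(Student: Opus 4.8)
The plan is to reduce the statement to the corresponding classical fact about matrices via the Fourier block-diagonalization $\bcirc(\tens{A})=(F_p^H\otimes I_n)\,\mathrm{diag}(D_1,\dots,D_p)\,(F_p\otimes I_n)$ used throughout this section. First I would establish the bridge: $\tens{A}$ is F-diagonalizable if and only if every diagonal block $D_i$ is a diagonalizable matrix. Following the proof of Theorem \ref{the2-1}, write $D_i=(P^{(i)})^{-1}C^{(i)}P^{(i)}$ with $C^{(i)}$ the Jordan form of $D_i$; then the (unique) Jordan canonical form $\tens{J}$ of $\tens{A}$ has frontal slices $J^{(i)}$ that are the explicit invertible phase-term linear combinations of $C^{(1)},\dots,C^{(p)}$ displayed in that proof. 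Applying Lemma \ref{lem2-4} (diagonal case) to the pair $\{J^{(i)}\}$, $\{C^{(i)}\}$, all the $J^{(i)}$ are diagonal matrices if and only if all the $C^{(i)}$ are diagonal matrices; and since each $C^{(i)}$ is already in Jordan form, $C^{(i)}$ is diagonal precisely when $D_i$ is diagonalizable. Uniqueness of $\tens{J}$ then makes ``$\tens{A}$ F-diagonalizable'' equivalent to ``every $D_i$ diagonalizable''.

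Second, I would invoke the classical criterion from \cite{Horn1}: a complex matrix $D_i$ is diagonalizable if and only if its minimal polynomial $M_{D_i}(x)$ has no repeated roots, i.e. is squarefree (equivalently, splits over $\mathbb{C}$ into distinct linear factors).

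Third, I would relate squarefreeness of $M_T(x)=\mathrm{LCM}(M_{D_1}(x),\dots,M_{D_p}(x))$ to that of the individual $M_{D_i}(x)$. Over $\mathbb{C}$ each $M_{D_i}$ factors into linear terms, and the least common multiple of monic polynomials that all split into linear factors is the product of the distinct linear factors occurring, each raised to the maximal multiplicity with which it appears among the $M_{D_i}$. Hence $M_T(x)$ is squarefree if and only if every such exponent equals $1$, if and only if each $M_{D_i}(x)$ is itself squarefree; for the ``only if'' direction one may alternatively note that each $M_{D_i}$ divides $M_T$, so a repeated root of any $M_{D_i}$ forces one in $M_T$.

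Chaining the three equivalences yields: $\tens{A}$ is F-diagonalizable $\iff$ every $D_i$ is diagonalizable $\iff$ every $M_{D_i}(x)$ is squarefree $\iff$ $M_T(x)$ has no multiple roots, which is the claim. The main obstacle is the first step: making the passage between ``the frontal slices of $\tens{J}$ are diagonal'' and ``the Fourier blocks $C^{(i)}$ are diagonal'' fully rigorous. This is exactly where Lemma \ref{lem2-4} is indispensable, and one must use that the linear-combination relations there are invertible in both directions so that diagonality transfers each way; everything else is a citation of standard matrix theory.
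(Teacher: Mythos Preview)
Your proof is correct and follows essentially the same route as the paper: reduce via Lemma~\ref{lem2-4} to the Fourier diagonal blocks $D_1,\dots,D_p$ and then invoke the classical minimal-polynomial criterion for diagonalizability. The only difference is packaging: the paper observes directly that $M_T(x)=\mathrm{LCM}(M_{D_1}(x),\dots,M_{D_p}(x))$ is the minimal polynomial of the single block-diagonal matrix $\mathrm{diag}(D_1,\dots,D_p)$ and applies the criterion once to that matrix, which lets it skip your third step about squarefreeness of an LCM.
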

\begin{proof}
By Lemma \ref{lem2-4}, the tensor $\tens{A}$ can be F-diagonalized if and only if the block matrix
$$\begin{bmatrix}
D_1&&&\\
&D_2&&\\
&&\ddots&\\
&&&D_p
\end{bmatrix}$$
can be diagonalized. Since the T-minimal polynomial $$M_{T}(x)={\rm LCM}(M_{D_1}(x),M_{D_2}(x),\cdots, M_{D_p}(x))$$ is also the minimal polynomial of the above block matrix, we obtain that the block matrix can be diagonalized if and only if $M_{T}(x)$ has no multiple roots. That is the result.
\end{proof}

We give the following corollary without proof.
\begin{corollary}\label{cor2-14}
Let $\tens{A}\in \mathbb{C}^{n\times n\times p}$ be a nilpotent tensor with nilpotent index $s\in \mathbb{Z}$. Then the T-minimal polynomial of $\tens{A}$ is $M_{T}(x)=x^s$.
\end{corollary}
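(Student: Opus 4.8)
The plan is to reduce the statement to the block-diagonal structure of $\bcirc(\tens{A})$ and then invoke the classical fact about minimal polynomials of nilpotent matrices. First I would translate the hypothesis into matrix language: by (4) of Lemma \ref{lem2-3}, $\bcirc(\tens{A})^k = \bcirc(\tens{A}^k)$, and since $\bcirc$ is injective (it has an inverse $\bcirc^{-1}$), $\tens{A}^k = \tens{O}$ holds if and only if $\bcirc(\tens{A})^k = O$. Hence $\bcirc(\tens{A})$ is a nilpotent matrix whose nilpotent index equals the same $s$.

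Next I would pass through the Fourier block diagonalization $\bcirc(\tens{A}) = (F_p^H \otimes I_n)\,{\rm diag}(D_1, \ldots, D_p)\,(F_p \otimes I_n)$ of Definition \ref{def2-14}. Conjugation by the fixed invertible matrix $F_p \otimes I_n$ commutes with taking powers, so $\bcirc(\tens{A})^k = O$ if and only if $D_i^k = O$ for every $i$; thus each $D_i$ is nilpotent with some index $s_i \le s$, and minimality of $s$ forces $\max_{1 \le i \le p} s_i = s$. (Alternatively, Corollary \ref{cor2-2} says all T-eigenvalues of $\tens{A}$ vanish, so each $D_i$ has spectrum $\{0\}$ and is nilpotent.) Now using the standard fact that a nilpotent matrix of index $s_i$ has minimal polynomial exactly $x^{s_i}$ --- it is annihilated by $x^{s_i}$ by definition of the index, and by no polynomial of smaller degree since $D_i^{s_i - 1} \neq O$ --- Definition \ref{def2-14} yields
$$
M_T(x) = {\rm LCM}\bigl(M_{D_1}(x), \ldots, M_{D_p}(x)\bigr) = {\rm LCM}\bigl(x^{s_1}, \ldots, x^{s_p}\bigr) = x^{\max_i s_i} = x^s,
$$
which is the assertion.

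I do not expect a genuine obstacle here; the argument is essentially bookkeeping. The one point that deserves some care is that bookkeeping: reconciling the various block conventions (the $B^{(i)}$ and $C^{(i)}$ appearing in Theorem \ref{the2-1} versus the $D_i$ of Definition \ref{def2-14}, and the placement of $F_p$ versus $F_p^H$), and verifying that the least power annihilating a block-diagonal matrix is the maximum of the least powers annihilating the individual blocks --- the step that identifies the nilpotent index of $\bcirc(\tens{A})$ with $\max_i s_i$. Once this identification is in place the LCM computation is immediate, and one may cross-check consistency with Theorem \ref{the2-10}, which already guarantees $M_T(\tens{A}) = \tens{O}$.
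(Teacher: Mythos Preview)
Your argument is correct. The paper itself states this corollary explicitly without proof (the sentence immediately preceding it reads ``We give the following corollary without proof''), so there is no original argument to compare against; your reduction via $\bcirc$ to the block-diagonal matrices $D_i$, followed by the LCM computation on the monomials $x^{s_i}$, is precisely the natural way to supply the omitted details, and the bookkeeping concerns you flag (the $F_p$ versus $F_p^H$ convention, and identifying $\max_i s_i$ with the nilpotent index of $\bcirc(\tens{A})$) present no genuine difficulty.
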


\section{Conclusion}
  In this paper, by using the tensor T-product and the matrix Jordan Canonical form, we give the T-Jordan Canonical form of third order tensors. Then we give the definition of T-minimal polynomial and T-characteristic polynomials that the F-square tensors will satisfy. Cayley-Hamilton theorem also holds for tensors. We propose several tensor functions and by using the F-square tensor power series. When two F-diagonalizable tensors commutes to each other, it is proved that they can be diagonalized by the same F-square tensor. Then we obtain several kinds of tensor decomposition methods which can be viewed as the generalized cases of matrices. In the second part of our main results we focus on the T-Drazin inverse and extend the results of matrices to tensor cases such as T-index, T-Range Hermitian and so on. T-Core-nilpotent decomposition is also obtained.

\section*{Acknowledgments}
The authors would like to thank the editor and two referees for their detailed comments.
Discussions with Prof. C. Ling, Prof. Ph. Toint, Prof. Z. Huang along with his team
members, Dr. W. Ding, Dr. Z. Luo, Dr. X. Wang, and Mr. C. Mo are very helpful.

\begin{appendix}
\textbf{Appendix:}\par
\subsection{T-Polar, T-LU, T-QR and T-Schur decompositions}
Hao, Kilmer, Braman, and Hoover \cite{Hao1} introduced the tensor QR (T-QR) decomposition.
\begin{theorem}\label{the2-11}{\rm (T-QR decomposition)} {\rm \cite{Hao1}} Let $\tens{A}\in \mathbb{C}^{n\times n\times p}$ be a complex F-square tensor. Then it can be factorized as
\begin{equation}
\tens{A}=\tens{Q}*\tens{R},
\end{equation}
where $\tens{Q}\in \mathbb{C}^{n\times n\times p}$ is a unitary tensor and $\tens{R}\in \mathbb{C}^{n\times n\times p}$ is a F-upper triangular tensor.
\end{theorem}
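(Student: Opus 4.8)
The plan is to reduce the tensor statement to a statement about the block circulant matrix $\bcirc(\tens{A})$ via the Fourier block-diagonalization, exactly in the spirit of the proofs of Theorem \ref{the2-1} and Theorem \ref{the2-6}. First I would write
$$
\bcirc(\tens{A})=(F_p^H\otimes I_n)\,{\rm diag}(D_1,\dots,D_p)\,(F_p\otimes I_n),
$$
where each $D_i\in\mathbb{C}^{n\times n}$ is the $i$-th Fourier block. Then for each $i$ apply the (ordinary) matrix QR decomposition $D_i=Q_iR_i$ with $Q_i$ unitary and $R_i$ upper triangular. Assembling these gives
$$
\bcirc(\tens{A})=\Bigl((F_p^H\otimes I_n){\rm diag}(Q_1,\dots,Q_p)(F_p\otimes I_n)\Bigr)\Bigl((F_p^H\otimes I_n){\rm diag}(R_1,\dots,R_p)(F_p\otimes I_n)\Bigr),
$$
and I would define $\tens{Q}:=\bcirc^{-1}\bigl((F_p^H\otimes I_n){\rm diag}(Q_1,\dots,Q_p)(F_p\otimes I_n)\bigr)$ and $\tens{R}:=\bcirc^{-1}\bigl((F_p^H\otimes I_n){\rm diag}(R_1,\dots,R_p)(F_p\otimes I_n)\bigr)$.

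Next I would verify the three required properties. First, $\bcirc(\tens{Q})\bcirc(\tens{R})=\bcirc(\tens{A})$ together with Lemma \ref{lem2-3}(3) gives $\tens{A}=\tens{Q}*\tens{R}$. Second, to see $\tens{Q}$ is unitary, note that the matrix $(F_p^H\otimes I_n){\rm diag}(Q_1,\dots,Q_p)(F_p\otimes I_n)$ is unitary (a product of three unitary matrices, since $F_p\otimes I_n$ is unitary and the block-diagonal of unitaries is unitary); hence by Lemma \ref{lem2-3}(6) its conjugate transpose is $\bcirc(\tens{Q}^H)$, and $\bcirc(\tens{Q}^H*\tens{Q})=\bcirc(\tens{Q})^H\bcirc(\tens{Q})=I=\bcirc(\tens{I})$, so $\tens{Q}^H*\tens{Q}=\tens{I}$ and symmetrically $\tens{Q}*\tens{Q}^H=\tens{I}$; this matches Definition \ref{def2-4}. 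Third, to see $\tens{R}$ is F-upper-triangular, apply Lemma \ref{lem2-4}: since the Fourier blocks $R_1,\dots,R_p$ are all upper-triangular, the frontal slices $R^{(1)},\dots,R^{(p)}$ of $\tens{R}$ are all upper-triangular, which is precisely what it means for $\tens{R}$ to be an F-upper triangular tensor.

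The only genuine subtlety — and the step I expect to require the most care — is the bookkeeping of which side carries $F_p$ versus $F_p^H$, so that Lemma \ref{lem2-4} is invoked in the correct direction (it is stated with $F_p$ on the left and $F_p^H$ on the right, matching the $\bcirc$ block pattern). Everything else is a routine transcription of the matrix QR decomposition through the algebra homomorphism $\tens{A}\mapsto\bcirc(\tens{A})$, using Lemmas \ref{lem2-2}, \ref{lem2-3}, \ref{lem2-4}. Since the result is attributed to \cite{Hao1}, an alternative and perhaps cleaner presentation would be the constructive one — a Gram–Schmidt process on the ``columns'' of $\tens{A}$ in the module $\mathbb{C}^{n\times 1\times p}$ over the ring of $1\times 1\times p$ tensors — but the Fourier-domain argument above is shorter and fits the machinery already developed in this paper.
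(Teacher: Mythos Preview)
The paper does not actually give its own proof of Theorem~\ref{the2-11}; the result is simply quoted from \cite{Hao1} and stated without proof. That said, your Fourier-block-diagonalization argument is correct and is precisely the template the paper itself uses to prove the neighbouring decomposition results in the same appendix (Theorem~\ref{the2-13} for T-polar and Theorem~\ref{the2-14} for T-LU): block-diagonalize $\bcirc(\tens{A})$, factor each $D_i$ via the corresponding matrix decomposition, reassemble, and invoke Lemma~\ref{lem2-4} to recover the structural property of the frontal slices. So your proposal is the one the paper would have written had it chosen to include a proof.
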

They applied the T-QR decomposition in the facial recognition and made comparison with the traditional PCA method. Gleich, Chen, and Varah \cite{Gleich1} generalized the T-QR decomposition into circulant algebra and established its Arnoldi method.\par
Besides for the T-QR decomposition and the above T-Jordan canonical decomposition, there are other kinds of decompositions which can be introduced. In this subsection, we mainly extend the polar decompositions and LU decomposition of matrices to third order tensors. \par

We first extend the concept of (semi-)definite matrix to third order tensors.
\begin{definition}\label{def2-15}{\rm (T-Positive definite)} Let $\tens{A}\in \mathbb{C}^{n\times n\times p}$ be a complex Hermitian tensor which can be block diagonalized as
$$
\bcirc(\tens{A})=(F_p^H\otimes I_n)
\begin{bmatrix}
A_1&&&\\
&A_2&&\\
&&\ddots&\\
&&&A_p
\end{bmatrix}(F_p\otimes I_n),
$$
the matrices $A_i$ are Hermitian because $\bcirc(\tens{A})$ is a Hermitian matrix. We call $\tens{A}$ is a T-positive definite tensor if and only if all the matrices $A_i \ ( i=1,2,\cdots, p)$ are positive definite.
\end{definition}\label{cor2-15}

\begin{theorem}\label{the2-13} {\rm (T-polar decomposition)} Let $\tens{A}\in \mathbb{C}^{n\times n\times p}$ be a complex F-square Hermitian tensor. Then $\tens{A}$ can be factorized as
\begin{equation}
\tens{A}=\tens{U}*\tens{P},
\end{equation}
where $\tens{U}\in \mathbb{C}^{n\times n\times p}$ is a unitary tensor and $\tens{P}\in \mathbb{C}^{n\times n\times p}$ is a T-positive definite or T-positive semi-definite tensor.
\end{theorem}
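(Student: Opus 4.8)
The plan is to transport the problem to block circulant matrices via $\bcirc$, apply the classical matrix polar decomposition slice-by-slice in the Fourier domain, and then reassemble. First I would note that since $\tens{A}$ is Hermitian, Lemma~\ref{lem2-3} gives $\bcirc(\tens{A})^{H}=\bcirc(\tens{A}^{H})=\bcirc(\tens{A})$, so $\bcirc(\tens{A})$ is a Hermitian block circulant matrix and therefore, exactly as in the proof of Theorem~\ref{the2-1}, admits a Fourier block diagonalization
$$
\bcirc(\tens{A})=(F_p^{H}\otimes I_n)\,{\rm diag}(A_1,\dots,A_p)\,(F_p\otimes I_n),
$$
in which each $A_i\in\mathbb{C}^{n\times n}$ is Hermitian, since a unitary similarity of a Hermitian matrix is Hermitian on each block.

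Next, for each $i$ I would invoke the classical matrix polar decomposition \cite{Higham1}: write $A_i=U_iP_i$ with $U_i\in\mathbb{C}^{n\times n}$ unitary and $P_i=(A_i^{H}A_i)^{1/2}$ positive semidefinite, noting that $P_i$ is positive definite precisely when $A_i$ is nonsingular. Substituting and regrouping the Fourier factors yields
$$
\bcirc(\tens{A})=\Big[(F_p^{H}\otimes I_n){\rm diag}(U_1,\dots,U_p)(F_p\otimes I_n)\Big]\Big[(F_p^{H}\otimes I_n){\rm diag}(P_1,\dots,P_p)(F_p\otimes I_n)\Big].
$$
Each bracketed factor is a matrix that is Fourier-block-diagonalized by $F_p\otimes I_n$, hence is block circulant; I would therefore define $\tens{U}$ and $\tens{P}$ as the $\bcirc^{-1}$-images of the first and second factor respectively.

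Then I would read off the three required properties. By Lemma~\ref{lem2-3}(3), $\bcirc(\tens{A})=\bcirc(\tens{U})\bcirc(\tens{P})=\bcirc(\tens{U}*\tens{P})$, so $\tens{A}=\tens{U}*\tens{P}$. The matrix $\bcirc(\tens{U})$ is a product of the unitary matrices $F_p^{H}\otimes I_n$, ${\rm diag}(U_i)$, $F_p\otimes I_n$, hence unitary; consequently $\bcirc(\tens{U}^{H}*\tens{U})=\bcirc(\tens{U})^{H}\bcirc(\tens{U})=I=\bcirc(\tens{I})$ and likewise $\bcirc(\tens{U}*\tens{U}^{H})=\bcirc(\tens{I})$, so $\tens{U}$ is a unitary tensor. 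Finally, $\bcirc(\tens{P})$ is exactly of the form required in Definition~\ref{def2-15} with Hermitian positive semidefinite blocks $P_i$, so $\tens{P}$ is T-positive semidefinite, and it is T-positive definite precisely when every $A_i$ — equivalently $\bcirc(\tens{A})$, equivalently $\tens{A}$ — is invertible, matching the dichotomy in the statement.

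The routine ingredients are the matrix polar decomposition and the Kronecker-product bookkeeping. The only point requiring a word of care is that each Fourier-block-diagonalized factor is genuinely block circulant, so that $\bcirc^{-1}$ applies and returns bona fide tensors; this is the same structural fact (a matrix is block circulant if and only if it is block-diagonalized by $F_p\otimes I_n$) that already underlies Lemma~\ref{lem2-4} and the proof of Theorem~\ref{the2-1}, and can be cited rather than reproved.
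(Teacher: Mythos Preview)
Your proposal is correct and follows essentially the same route as the paper: Fourier block-diagonalize $\bcirc(\tens{A})$, apply the classical polar decomposition $A_i=U_iP_i$ to each diagonal block, and reassemble the block-diagonal unitary and positive factors into block circulant matrices, hence into tensors $\tens{U}$ and $\tens{P}$. Your write-up is in fact more careful than the paper's own proof, since you explicitly verify that $\tens{U}$ is unitary, that $\tens{P}$ satisfies Definition~\ref{def2-15}, and that the two reassembled factors are indeed block circulant so that $\bcirc^{-1}$ is applicable.
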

\begin{proof}
For $\bcirc(\tens{A})$, we have
$$
\begin{aligned}
\bcirc(\tens{A})&=(F_p^H\otimes I_n)
\begin{bmatrix}
A_1&&&\\
&A_2&&\\
&&\ddots&\\
&&&A_p
\end{bmatrix}(F_p\otimes I_n).
\end{aligned}
$$
Since $\tens{A}$ is a Hermitian tensor,  all the matrices $A_i\ ( i=1,2,\cdots, p)$ are Hermitian, then they can be factorized as
$$
A_i=U_i P_i,
$$
where $U_i$ are unitary matrices and $P_i$ are positive definite matrices. It comes to
$$
\begin{aligned}
\bcirc(\tens{A})
&=(F_p^H\otimes I_n)
\begin{bmatrix}
U_1&&&\\
&U_2&&\\
&&\ddots&\\
&&&U_p
\end{bmatrix}
\begin{bmatrix}
P_1&&&\\
&P_2&&\\
&&\ddots&\\
&&&P_p
\end{bmatrix}
(F_p\otimes I_n)\\
&=\bcirc(\tens{U})\bcirc(\tens{P}),
\end{aligned}
$$
which is equivalent to
$$
\tens{A}=\tens{U}*\tens{P},
$$
where $\tens{U}$ and $\tens{P}$ are unitary tensor and positive tensor, respectively.
\end{proof}
Another important matrix decomposition is called the LU decomposition which is proved to be of great importance in numerical linear algebra.

We also find the similar factorization hold for third order tensors.
\begin{theorem}\label{the2-14}{\rm (T-LU decomposition)} Let $\tens{A}\in \mathbb{C}^{n\times n\times p}$ be a complex F-square tensor. Then it can be factorized as
\begin{equation}
\tens{A}=\tens{L}*\tens{U},
\end{equation}
where $\tens{L}\in \mathbb{C}^{n\times n\times p}$ is a F-lower tensor and $\tens{U}\in \mathbb{C}^{n\times n\times p}$ is a F-upper tensor.
\end{theorem}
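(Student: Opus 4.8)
The plan is to imitate the proofs of the T-QR decomposition (Theorem \ref{the2-11}) and the T-polar decomposition (Theorem \ref{the2-13}): transport the problem into the Fourier domain where $\bcirc(\tens{A})$ is block diagonal, apply the ordinary matrix LU factorization to each diagonal block, and then transport the resulting factors back. First I would write the Fourier block diagonalization
$$
\bcirc(\tens{A})=(F_p^H\otimes I_n)\,{\rm diag}(A_1,A_2,\ldots,A_p)\,(F_p\otimes I_n),
$$
which is available for every $\tens{A}\in\mathbb{C}^{n\times n\times p}$ because $\bcirc(\tens{A})$ is block circulant (Lemma \ref{lem2-2}).

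Next, for each $i=1,2,\ldots,p$ I would take an LU factorization $A_i=L_iU_i$ of the $i$-th block, with $L_i\in\mathbb{C}^{n\times n}$ unit lower triangular and $U_i\in\mathbb{C}^{n\times n}$ upper triangular. Since ${\rm diag}(L_iU_i)={\rm diag}(L_i)\,{\rm diag}(U_i)$ and $(F_p\otimes I_n)(F_p^H\otimes I_n)=I_{np}$, substitution yields
$$
\bcirc(\tens{A})=\Big((F_p^H\otimes I_n)\,{\rm diag}(L_i)\,(F_p\otimes I_n)\Big)\Big((F_p^H\otimes I_n)\,{\rm diag}(U_i)\,(F_p\otimes I_n)\Big).
$$
By Lemma \ref{lem2-2} each bracketed matrix is again block circulant, hence equals $\bcirc(\tens{L})$, resp.\ $\bcirc(\tens{U})$, for uniquely determined tensors $\tens{L},\tens{U}\in\mathbb{C}^{n\times n\times p}$; by Lemma \ref{lem2-3}(3) their product is $\bcirc(\tens{L}*\tens{U})$, so $\tens{A}=\tens{L}*\tens{U}$. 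It then remains to identify the triangular shapes of the frontal slices, and this is exactly Lemma \ref{lem2-4}: since all $L_i$ (resp.\ $U_i$) are lower (resp.\ upper) triangular, the frontal slices $L^{(1)},\ldots,L^{(p)}$ (resp.\ $U^{(1)},\ldots,U^{(p)}$) are the same $\omega^t/p$-linear combinations of the $L_i$ (resp.\ $U_i$) and hence are all lower (resp.\ upper) triangular, i.e.\ $\tens{L}$ is F-lower triangular and $\tens{U}$ is F-upper triangular.

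The one genuine obstacle is the same as in the matrix case: an arbitrary square matrix need not admit an LU factorization without row interchanges, so the step ``$A_i=L_iU_i$'' requires a regularity hypothesis (for instance nonvanishing of all leading principal minors of $A_i$, equivalently of $\bcirc(\tens{A})$). Under that hypothesis everything else is the mechanical Fourier-domain bookkeeping above; without it one must instead produce a ``T-PLU'' form $\tens{A}=\tens{P}*\tens{L}*\tens{U}$ with an F-permutation tensor $\tens{P}$ assembled blockwise in the same manner, or argue by a small perturbation. A minor technical point: Lemma \ref{lem2-4} is stated with $(F_p\otimes I_n)$ and $(F_p^H\otimes I_n)$ in the opposite order to what appears here, but the Vandermonde / inverse-Vandermonde argument in its proof applies verbatim, so no extra work is needed there.
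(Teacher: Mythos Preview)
Your proposal is correct and follows essentially the same approach as the paper: block-diagonalize $\bcirc(\tens{A})$ via the DFT, LU-factor each diagonal block, reassemble, and invoke Lemma~\ref{lem2-4} to recover the F-triangular shapes. In fact you are more careful than the paper, which simply asserts that each $A_i$ has an LU factorization without addressing the standard existence caveat you flag.
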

\begin{proof}
For matrix $\bcirc(\tens{A})$, we have
$$
\begin{aligned}
\bcirc(\tens{A})&=(F_p^H\otimes I_n)
\begin{bmatrix}
A_1&&&\\
&A_2&&\\
&&\ddots&\\
&&&A_p
\end{bmatrix}(F_p\otimes I_n),
\end{aligned}
$$
each $A_i \ ( i=1,2,\cdots, p)$ is a square matrix which can be factorized as
$$A_i=L_i U_i,$$
where $U_i$ are upper triangular matrices and $L_i$ are lower triangular matrices. It comes to
$$
\begin{aligned}
\bcirc(\tens{A})
&=(F_p^H\otimes I_n)
\begin{bmatrix}
L_1&&&\\
&L_2&&\\
&&\ddots&\\
&&&L_p
\end{bmatrix}
\begin{bmatrix}
U_1&&&\\
&U_2&&\\
&&\ddots&\\
&&&U_p
\end{bmatrix}
(F_p\otimes I_n)\\
&=\bcirc(\tens{L})\bcirc(\tens{U}),
\end{aligned}
$$
which is equivalent to
$$
\tens{A}=\tens{L}*\tens{U}.
$$
Here $\tens{L}$ and $\tens{U}$ are respectively F-upper tensor and F-lower tensor by Lemma \ref{lem2-4}.
\end{proof}
By the same kind of method, we obtain the following theorem:
\begin{theorem}\label{the2-15}{\rm (T-Schur decomposition)} Let $\tens{A}\in \mathbb{C}^{n\times n\times p}$ be a complex F-square tensor. Then it can be factorized as
\begin{equation}
\tens{A}=\tens{Q}^{-1}*\tens{T}*\tens{Q},
\end{equation}
where $\tens{Q}\in \mathbb{C}^{n\times n\times p}$ is a unitary tensor and $\tens{T}\in \mathbb{C}^{n\times n\times p}$ is a F-upper triangular tensor.
\end{theorem}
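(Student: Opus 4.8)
The plan is to imitate the arguments already used for the T-polar and T-LU decompositions (Theorems \ref{the2-13} and \ref{the2-14}): transfer the problem to the block-circulant picture via the Fourier block diagonalization of $\bcirc(\tens{A})$, apply the classical matrix Schur decomposition slice-by-slice in the Fourier domain, and then reassemble, using Lemma \ref{lem2-4} to recover the tensor structure and Lemma \ref{lem2-3} to move back and forth between $\bcirc$ and $*$.

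First I would write
$$
\bcirc(\tens{A})=(F_p^H\otimes I_n)
\begin{bmatrix}
A_1&&&\\
&A_2&&\\
&&\ddots&\\
&&&A_p
\end{bmatrix}(F_p\otimes I_n),
$$
and apply the matrix Schur decomposition to each block: there exist unitary $Q_i\in\mathbb{C}^{n\times n}$ and upper triangular $T_i\in\mathbb{C}^{n\times n}$ with $A_i=Q_i^{H}T_iQ_i$, $i=1,\dots,p$. Since $\mathrm{diag}(Q_i^{H})=\bigl(\mathrm{diag}(Q_i)\bigr)^{-1}$ and $(F_p\otimes I_n)(F_p^H\otimes I_n)=I_{np}$, substituting and inserting identity factors between the blocks yields
$$
\bcirc(\tens{A})
=\left((F_p^H\otimes I_n)\,\mathrm{diag}(Q_i)\,(F_p\otimes I_n)\right)^{-1}
\left((F_p^H\otimes I_n)\,\mathrm{diag}(T_i)\,(F_p\otimes I_n)\right)
\left((F_p^H\otimes I_n)\,\mathrm{diag}(Q_i)\,(F_p\otimes I_n)\right).
$$
By Lemma \ref{lem2-4} each of the bracketed matrices is block circulant, hence equals $\bcirc(\tens{Q})$ and $\bcirc(\tens{T})$ for uniquely determined tensors $\tens{Q},\tens{T}\in\mathbb{C}^{n\times n\times p}$; and since every $T_i$ is upper triangular, Lemma \ref{lem2-4} applied to $\tens{T}$ shows that each frontal slice of $\tens{T}$ is upper triangular, i.e. $\tens{T}$ is an F-upper triangular tensor.

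It remains to check that $\tens{Q}$ is unitary and to pull the identity back to the tensor level. The matrix $\bcirc(\tens{Q})=(F_p^H\otimes I_n)\,\mathrm{diag}(Q_i)\,(F_p\otimes I_n)$ is a product of unitary matrices, hence unitary, so $\bcirc(\tens{Q})^{H}\bcirc(\tens{Q})=\bcirc(\tens{Q})\bcirc(\tens{Q})^{H}=I_{np}$; by Lemma \ref{lem2-3}(3),(6) this gives $\tens{Q}^{H}*\tens{Q}=\tens{Q}*\tens{Q}^{H}=\tens{I}$, so $\tens{Q}$ is unitary and $\tens{Q}^{-1}=\tens{Q}^{H}$. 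Finally, applying $\bcirc^{-1}$ to the displayed factorization and using that $\bcirc$ is multiplicative and injective with $\bcirc(\tens{Q})^{-1}=\bcirc(\tens{Q}^{-1})$ (Lemma \ref{lem2-2} and Lemma \ref{lem2-3}(3)) gives
$$
\tens{A}=\tens{Q}^{-1}*\tens{T}*\tens{Q},
$$
as claimed. The argument is essentially routine; the only point requiring care — and the closest thing to an obstacle — is the bookkeeping that a Fourier conjugation of a block-diagonal matrix is again block circulant, so that the grouped factors are genuinely of the form $\bcirc(\cdot)$ and the identity in $\mathbb{C}^{np\times np}$ descends to one in $\mathbb{C}^{n\times n\times p}$; this is exactly what Lemmas \ref{lem2-2} and \ref{lem2-4} provide, and it is the same mechanism already used in the proofs of Theorems \ref{the2-13} and \ref{the2-14}.
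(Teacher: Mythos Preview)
Your proposal is correct and follows exactly the approach the paper intends: the paper does not spell out a proof of Theorem~\ref{the2-15} but simply states ``By the same kind of method, we obtain the following theorem,'' referring to the slice-wise Fourier-domain argument used for Theorems~\ref{the2-13} and~\ref{the2-14}. You have faithfully supplied those details, including the verification that $\tens{Q}$ is unitary, which the paper leaves implicit.
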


\subsection{Tensor T-index}
In the following subsections, we will investigate a special kind of generalized inverse having spectral properties besides for the Moore-Penrose inverse defined in the above subsections. Only F-square third order tensors will be considered, since only they have T-eigenvalues.\par
If a tensor $\tens{A}\in \mathbb{C}^{n\times n\times p}$ is invertible, it is easy to see that all the T-eigenvalues $\lambda_i\ (i=1,2,\ldots, np)$ are non-zero and the T-eigenvalues of $\tens{A}^{-1}$ are $\lambda_i^{-1}\ (i=1,2,\ldots, np)$.\par

The Moore-Penrose inverse of irreversible third order tensors can be defined as the unique solution of tensor $\tens{X}\in \mathbb{C}^{n\times n\times p}$ satisfying the following four equations,
\begin{equation}
\tens{A}*\tens{X}*\tens{A}=\tens{A},
\quad
\tens{X}*\tens{A}*\tens{X}=\tens{X},
\quad
(\tens{A}*\tens{X})^H=\tens{A}*\tens{X},
\quad
(\tens{X}*\tens{A})^H=\tens{X}*\tens{A},
\end{equation}
where $\tens{A}^H$ denotes the conjugate transpose of tensor $\tens{A}$ and the unique solution of the equation is denoted by $\tens{A}^{\dag}$. As a special case, if $\ten{A}$ is invertible, then $\tens{X}=\tens{A}^{-1}$ trivially satisfies the above four equations, which means the Moore-Penrose inverse of an invertible tensor is the inverse of the tensor.\par
Similarly, the T-Drazin inverse $\tens{A}^{D}$ (or called $\{1^k,2,5\}$-inverse) is defined as
\begin{equation}
\tens{A}*\tens{X}*\tens{A}=\tens{A},
\quad
\tens{X}*\tens{A}*\tens{X}=\tens{X},
\quad
\tens{A}^k*\tens{X}*\tens{A}=\tens{A}^k,
\end{equation}
where $k\in \mathbb{Z}$ is a given positive integer. The T-group inverse $\tens{A}^{\#}$ (or called $\{1,2,5\}$-inverse) is defined as
\begin{equation}
\tens{A}*\tens{X}*\tens{A}=\tens{A},
\quad
\tens{X}*\tens{A}*\tens{X}=\tens{X},
\quad
\tens{A}*\tens{X}=\tens{X}*\tens{A}.
\end{equation}

In matrix case, the smallest positive integer $k$ for which
$$
{\rm rank} (A^k)={\rm rank} (A^{k+1})
$$
holds is called the index of $A\in\mathbb{C}^{n\times n}$, usually denoted as ${\rm Ind}(A)$ \cite{Ben1}. For third order tensors, we can also introduce the similar concept with the T-product.
\begin{definition}\label{def2-16} {\rm (T-index)} Let $\tens{A}\in\mathbb{C}^{n\times n\times p}$ be a complex tensor. The T-index of tensor $\tens{A}$ is defined as
\begin{equation}
{\rm Ind}_T (\tens{A})={\rm Ind}(\bcirc(\tens{A})).
\end{equation}
\end{definition}

\begin{definition}\label{def2-17} {\rm (T-rank)} Let $\tens{A}\in\mathbb{C}^{n\times n\times p}$ be a complex tensor. The T-rank of tensor $\tens{A}$ is defined as
\begin{equation}
{\rm rank}_T (\tens{A})={\rm rank}(\bcirc(\tens{A})).
\end{equation}
\end{definition}

\begin{corollary}\label{cor2-17} Let $\tens{A}\in \mathbb{C}^{n\times n\times p}$ be a complex F-square tensor satisfies
$$
\begin{aligned}
\bcirc(\tens{A})&=(F_p^H\otimes I_n)
\begin{bmatrix}
A_1&&&\\
&A_2&&\\
&&\ddots&\\
&&&A_p
\end{bmatrix}(F_p\otimes I_n),
\end{aligned}
$$
then
\begin{equation}
\ind_T(\tens{A})=\max_{1\leq i\leq p}\{(\ind(A_i)\}.
\end{equation}
\end{corollary}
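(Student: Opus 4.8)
The plan is to reduce the statement to the matrix index via Definition \ref{def2-16} together with the block-diagonalization hypothesis. First I would recall that $\ind_T(\tens{A})=\ind(\bcirc(\tens{A}))$ by definition, so it suffices to compute $\ind(\bcirc(\tens{A}))$. Write $D:={\rm diag}(A_1,\ldots,A_p)$ and $S:=F_p\otimes I_n$, so that $S^{-1}=F_p^H\otimes I_n$ and the hypothesis reads $\bcirc(\tens{A})=S^{-1}DS$. Since $S$ is invertible, $\bcirc(\tens{A})^k=S^{-1}D^kS$ for every $k\ge 0$, and because rank is unchanged under multiplication by invertible matrices, $\rank(\bcirc(\tens{A})^k)=\rank(D^k)$ for all $k$. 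Hence the smallest $k$ with $\rank(\bcirc(\tens{A})^k)=\rank(\bcirc(\tens{A})^{k+1})$ equals the smallest $k$ with $\rank(D^k)=\rank(D^{k+1})$; that is, $\ind(\bcirc(\tens{A}))=\ind(D)$.

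Next I would exploit the block structure. Since $D$ is block diagonal, $D^k={\rm diag}(A_1^k,\ldots,A_p^k)$, so $\rank(D^k)=\sum_{i=1}^p\rank(A_i^k)$ for every $k\ge 0$. For each fixed $i$ the sequence $\big(\rank(A_i^k)\big)_{k\ge 0}$ is non-increasing, since the column space of $A_i^{k+1}$ is contained in that of $A_i^k$; and by the defining property of the matrix index it is strictly decreasing for $k<\ind(A_i)$ and constant for $k\ge\ind(A_i)$.

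Now set $m:=\max_{1\le i\le p}\ind(A_i)$. For $k\ge m$ every summand $\rank(A_i^k)$ has already reached its terminal value, so $\rank(D^k)=\rank(D^{k+1})$, which gives $\ind(D)\le m$. Conversely, pick $i_0$ with $\ind(A_{i_0})=m$; then at $k=m-1$ we have $\rank(A_{i_0}^{m-1})>\rank(A_{i_0}^{m})$ while all other summands are non-increasing, and a sum of non-increasing terms containing at least one strict decrease strictly decreases, so $\rank(D^{m-1})>\rank(D^{m})$; hence $\ind(D)\ge m$. Combining the two inequalities, $\ind(D)=m$, and with the first paragraph $\ind_T(\tens{A})=\ind(\bcirc(\tens{A}))=\ind(D)=\max_{1\le i\le p}\ind(A_i)$.

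I do not expect a genuine obstacle here; the only point that needs a little care is the equivalence ``$\rank(D^\bullet)$ stabilizes at step $k$'' $\iff$ ``every $\rank(A_i^\bullet)$ stabilizes at step $k$'', which rests on each sequence being monotone non-increasing and on the standard matrix-index fact that once two consecutive rank-powers agree, all later ones agree as well.
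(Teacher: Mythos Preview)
Your proof is correct and follows the same approach as the paper: reduce to the similar block-diagonal matrix $D={\rm diag}(A_1,\ldots,A_p)$ via the definition $\ind_T(\tens{A})=\ind(\bcirc(\tens{A}))$, then identify $\ind(D)$ as $\max_i\ind(A_i)$. The paper's argument is a two-line sketch that asserts this last equality without justification, whereas you spell it out via the monotone rank sequences $k\mapsto\rank(A_i^k)$; this added detail is welcome and the reasoning is sound (the one harmless omission is the degenerate case $m=0$, where $D$ is invertible and the conclusion is immediate).
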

\begin{proof}
Since
$$
\begin{aligned}
\bcirc(\tens{A})^k&=(F_p^H\otimes I_n)
\left(\begin{bmatrix}
A_1&&&\\
&A_2&&\\
&&\ddots&\\
&&&A_p
\end{bmatrix}\right)^k(F_p\otimes I_n),
\end{aligned}
$$
The index of the middle block matrix is $\max\limits_{1\leq i\leq p}\{\ind(A_i))\}$, which ends the proof.
\end{proof}

By recalling the definition of range space null space of third order tensors, we have the following lemma.
\begin{lemma}\label{lem2-7}
Let $\tens{A}\in \mathbb{C}^{n\times n\times p }$ and $\ind_T(\tens{A})=k$. Then we have:\\
{\rm (1)} All tensors $\{\tens{A}^l, l\geq k\}$ have the same T-rank, the same T-range $\tens{R}(\tens{A}^l)$ and the same T-null space $\tens{N}(\tens{A}^l)$,\\
{\rm (2)} All tensors $\{(\tens{A}^{\top})^l, l\geq k\}$ have the same T-rank, the same T-range $\tens{R}((\tens{A}^{\top})^l)$ and the same T-null space $\tens{N}((\tens{A}^{\top})^l)$,\\
{\rm (3)} All tensors $\{(\tens{A}^{H})^l, l\geq k\}$ have the same T-rank, the same T-range $\tens{R}((\tens{A}^{H})^l)$ and the same T-null space $\tens{N}((\tens{A}^{H})^l)$.\\
{\rm (4)} For no $l$ less than $k$ do $\tens{A}^l$ and a higher power of $\tens{A}$ (or their transposes or conjugate transposes) have the same T-range or the same T-null space.
\end{lemma}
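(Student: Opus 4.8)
The plan is to transport every quantity appearing in the statement down to the matrix level through the operator $\bcirc$, and then apply the classical matrix facts about the index. Put $M:=\bcirc(\tens{A})$, so that $\ind(M)=k$ by Definition \ref{def2-16}. By Lemma \ref{lem2-3}(4) we have $\bcirc(\tens{A}^l)=M^l$, while by Lemma \ref{lem2-3}(6), $\bcirc((\tens{A}^\top)^l)=(M^\top)^l$ and $\bcirc((\tens{A}^H)^l)=(M^H)^l$; hence $\rank_T(\tens{A}^l)=\rank(M^l)$, $\rank_T((\tens{A}^\top)^l)=\rank((M^\top)^l)$, and likewise for $\tens{A}^H$. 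Since for invertible matrices $P,Q$ one has ${\rm Ran}(PXQ)=P\,{\rm Ran}(X)$ and ${\rm Null}(PXQ)=Q^{-1}\,{\rm Null}(X)$, and since $F_p$ (hence $F_p\otimes I_n$ and $F_p^H\otimes I_n$) is invertible, Definition \ref{def2-5} yields
$$
\tens{R}(\tens{A}^l)=(F_p^H\otimes I_n)\,{\rm Ran}(M^l),\qquad \tens{N}(\tens{A}^l)=(F_p\otimes I_n)^{-1}\,{\rm Null}(M^l),
$$
and the same identities with $M$ replaced by $M^\top$ or $M^H$. As these are fixed invertible linear images, $\tens{R}(\tens{A}^l)$ and $\tens{N}(\tens{A}^l)$ stabilize in $l$ precisely when ${\rm Ran}(M^l)$ and ${\rm Null}(M^l)$ do. Thus all four assertions reduce to purely matrix statements about the powers of $M$, $M^\top$ and $M^H$.

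Next I would invoke the standard matrix fact (see \cite{Ben1,2Campbell1}): the chain ${\rm Ran}(M)\supseteq{\rm Ran}(M^2)\supseteq\cdots$ is non-increasing, the chain ${\rm Null}(M)\subseteq{\rm Null}(M^2)\subseteq\cdots$ is non-decreasing, each becomes stationary as soon as two consecutive terms coincide, and the first index of stabilization --- the same for the range, the null space and the rank --- equals $\ind(M)$. Applied to $M=\bcirc(\tens{A})$ this gives (1). For (2) and (3) it suffices to note that $\rank((M^\top)^l)=\rank((M^l)^\top)=\rank(M^l)$ and $\rank((M^H)^l)=\rank(M^l)$, so $\ind(M^\top)=\ind(M^H)=\ind(M)=k$, and the same stabilization statement applied to $M^\top$ and $M^H$, together with the translation of the first paragraph, gives (2) and (3). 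One may equivalently use the block structure: by Corollary \ref{cor2-17}, $k=\max_i\ind(A_i)$, where the $A_i$ are the Fourier blocks of $\tens{A}$, and since $M^l$ is, in the Fourier basis, block-diagonal with blocks $A_i^l$, the stabilization of $\rank(M^l)$, ${\rm Ran}(M^l)$ and ${\rm Null}(M^l)$ follows block by block.

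Finally, (4) is precisely the sharpness half of the same matrix fact: were $\rank(M^l)=\rank(M^{l+1})$, or ${\rm Ran}(M^l)={\rm Ran}(M^{l+1})$, or ${\rm Null}(M^l)={\rm Null}(M^{l+1})$ to hold for some $l<k$, the corresponding chain would already be stationary from $l$ onward, contradicting the minimality of $k=\ind(M)$; and, by monotonicity of these chains, agreement of $\tens{A}^l$ with \emph{any} strictly higher power already forces agreement with $\tens{A}^{l+1}$, so it is enough to rule out the immediate successor. The identical reasoning for $M^\top$ and $M^H$ (whose index is again $k$), together with the invertible-image correspondence of the first paragraph, then delivers (4) for $\tens{A}$, $\tens{A}^\top$, $\tens{A}^H$ and for the T-range and T-null space. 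I do not expect a genuine obstacle here; the only place demanding care is the bookkeeping of the identities $\bcirc(\tens{A})^l=\bcirc(\tens{A}^l)$ and ${\rm Ran}(PXQ)=P\,{\rm Ran}(X)$, ${\rm Null}(PXQ)=Q^{-1}\,{\rm Null}(X)$, so that the matrix index and, above all, its exact stabilization index transfer cleanly to every tensor quantity in the statement.
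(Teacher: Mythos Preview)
The paper states Lemma~\ref{lem2-7} without proof: it is introduced with the sentence ``By recalling the definition of range space null space of third order tensors, we have the following lemma'' and is immediately followed by Theorem~\ref{the2-16}. So there is no paper proof to compare against.

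Your argument is correct and is exactly the natural one implied by the paper's setup. Since $\ind_T$, $\rank_T$, $\tens{R}(\cdot)$ and $\tens{N}(\cdot)$ are all defined in terms of $\bcirc(\tens{A})$ (Definitions~\ref{def2-5}, \ref{def2-16}, \ref{def2-17}), and since $\bcirc$ turns T-powers, transposes and conjugate transposes into ordinary matrix powers, transposes and conjugate transposes (Lemma~\ref{lem2-3}(4),(6)), the whole lemma is a direct transcription of the classical matrix index facts for $M=\bcirc(\tens{A})$. Your handling of the invertible conjugation by $F_p\otimes I_n$ in Definition~\ref{def2-5} is the only point requiring care, and you treat it correctly via ${\rm Ran}(PXQ)=P\,{\rm Ran}(X)$ and ${\rm Null}(PXQ)=Q^{-1}{\rm Null}(X)$ for invertible $P,Q$.
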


The relationship between the tensor T-index and the tensor T-minimal polynomial is as follows.
\begin{theorem}\label{the2-16}
Let $\tens{A}\in \mathbb{C}^{n\times n\times p }$ be a complex third order tensor. Then the following statements are equivalent:\\
{\rm (1)} $\ind_T(\tens{A})=k$.\\
{\rm (2)} The smallest integer holds for equation $\tens{A}^k*\tens{X}*\tens{A}=\tens{A}^k$ is $k$.\\
{\rm (3)} $k$ is the multiplicity of $\lambda=0$ as a zero point of the T-minimal polynomial $M_T(x)$.
\end{theorem}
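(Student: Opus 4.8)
The plan is to push all three statements down to the matrix level through the operator $\bcirc$ and the Fourier block diagonalization $\bcirc(\tens{A})=(F_p^H\otimes I_n)\,{\rm diag}(D_1,\dots,D_p)\,(F_p\otimes I_n)$, and then invoke the classical matrix theory of the index. Recall from Corollary \ref{cor2-17} that $\ind_T(\tens{A})=\ind(\bcirc(\tens{A}))=\max_{1\le i\le p}\ind(D_i)$, and from Definition \ref{def2-14} that $M_T(x)={\rm LCM}(M_{D_1}(x),\dots,M_{D_p}(x))$; since $\bcirc(\tens{A})$ is similar to ${\rm diag}(D_1,\dots,D_p)$, this $M_T(x)$ is also the minimal polynomial of $\bcirc(\tens{A})$.

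For the equivalence $(1)\Leftrightarrow(3)$, I would use the standard fact that for any square complex matrix $M$ the multiplicity of $\lambda=0$ as a root of the minimal polynomial of $M$ equals $\ind(M)$ (the minimal polynomial is $\prod_j(x-\lambda_j)^{m_j}$ with $m_j$ the size of the largest Jordan block at $\lambda_j$, and the index is exactly the largest block size at $\lambda=0$). Applying this to each $D_i$, the multiplicity of $\lambda=0$ in $M_{D_i}$ is $\ind(D_i)$, hence the multiplicity of $\lambda=0$ in $M_T(x)={\rm LCM}(M_{D_1},\dots,M_{D_p})$ is $\max_i\ind(D_i)=\ind_T(\tens{A})$. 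Thus $(1)$ and $(3)$ are equivalent.

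For $(1)\Leftrightarrow(2)$, the first step is to observe via Lemma \ref{lem2-3}(3)--(4) that $\bcirc(\tens{A}^k*\tens{X}*\tens{A})=\bcirc(\tens{A})^k\bcirc(\tens{X})\bcirc(\tens{A})$, so the tensor equation holds iff the corresponding matrix equation holds for $\bcirc(\tens{X})$. Conversely, since the family of block circulant matrices is closed under the operations in Lemma \ref{lem2-2} and the block-diagonal form decouples into the $p$ blocks $D_i$, a matrix solution may be taken block circulant precisely when the decoupled equation in each $D_i$ is solvable. This reduces $(2)$ to: $k$ is the smallest integer for which the (T-Drazin, i.e.\ $\{1^k,2,5\}$-type) equation is solvable for every $D_i$, i.e.\ the smallest $k$ with $k\ge\ind(D_i)$ for all $i$, i.e.\ $k=\max_i\ind(D_i)=\ind_T(\tens{A})$. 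Here I would quote the classical characterization of the index as the least $k$ for which the Drazin-type equation $M^{k+1}X=M^k$ (equivalently $M^kXM=M^k$ together with the commuting relation $MX=XM$) is solvable \cite{Ben1,2Campbell1}.

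The main obstacle I anticipate is this last reduction: one must check that solvability of the tensor equation transfers tightly to solvability of each decoupled block equation, and that $\tens{A}^k*\tens{X}*\tens{A}=\tens{A}^k$ is being used in conjunction with the commuting condition $\tens{A}*\tens{X}=\tens{X}*\tens{A}$ built into the $\{1^k,2,5\}$-inverse --- without it the bare equation $M^kXM=M^k$ is already solvable with $k=1$ and fails to detect the index. Once that point is pinned down, the remainder is routine bookkeeping with the Fourier block diagonalization and the quoted matrix facts.
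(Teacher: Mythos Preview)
Your argument is sound and takes a genuinely different route from the paper. You push everything through $\bcirc$ and the Fourier block-diagonalization, reducing each of the three statements to the corresponding classical matrix fact applied blockwise to $D_1,\dots,D_p$, and then take the maximum over the blocks. The paper, by contrast, argues more directly at the tensor level: for $(1)\Leftrightarrow(2)$ it uses that $\rank_T(\tens{A}^{l+1})=\rank_T(\tens{A}^l)$ is equivalent to the solvability of $\tens{A}^{l+1}*\tens{X}=\tens{A}^l$; for $(2)\Leftrightarrow(3)$ it factors $M_T(\lambda)=\lambda^l p(\lambda)$ with $p(0)\neq 0$ and derives a contradiction to minimality if $l>k$ (via $p(\tens{A})*\tens{A}^l*\tens{X}=p(\tens{A})*\tens{A}^{l-1}=\tens{O}$), and a contradiction to $(2)$ if $l<k$ (by writing $M_T(\lambda)=c\lambda^l(1-\lambda q(\lambda))$ and deducing $\tens{A}^{l+1}*q(\tens{A})=\tens{A}^l$). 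Your blockwise reduction is cleaner and leverages Corollary \ref{cor2-17} and Definition \ref{def2-14} explicitly; the paper's argument is self-contained and mimics the classical Ben-Israel--Greville proof without decoupling.

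Your flagged obstacle is real and well spotted. The bare equation $\tens{A}^k*\tens{X}*\tens{A}=\tens{A}^k$ is solvable already for $k=1$ (any $\{1\}$-inverse of $\tens{A}$ works), so as literally written $(2)$ cannot detect the index. The paper's own proof in fact never uses that equation: throughout it works with the solvability of $\tens{A}^{l+1}*\tens{X}=\tens{A}^l$ (equivalently $\tens{A}^l*\tens{X}=\tens{A}^{l-1}$ for $l>k$), which \emph{is} the correct index-detecting condition. So your instinct to read $(2)$ as the Drazin-type condition rather than the bare $\{1^k\}$ equation is exactly what the paper's proof does implicitly; you should state this reinterpretation explicitly in your write-up.
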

\begin{proof}
${\rm (1)} \Longleftrightarrow {\rm (2)}$ Since $\rank_T(\tens{A}^{l+1})=\rank_T(\tens{A}^l)$ is equivalent to $\tens{R}(\tens{A}^{l+1})=\tens{R}(\tens{A}^l)$, which means there exists a tensor $\tens{X}$ subject to $\tens{A}^{l+1}*\tens{X}=\tens{A}^l$.\\
${\rm (2)} \Longleftrightarrow {\rm (3)}$ Let the T-minimal polynomial have the factorization,
$$
M_{T}(\lambda)=\lambda^l p(\lambda),
$$
where $p(0)\neq 0$. If $k$ satisfies {\rm (2)}, we need to show that $k=l$. We have
$$
p(\tens{A})*\tens{A}^l=\tens{O}.
$$
If $l>k$, then we have
$$
\tens{O}=p(\tens{A})*\tens{A}^l*\tens{X}=p(\tens{A})*\tens{A}^{l-1},
$$
where $\lambda^{l-1}p(\lambda)$ is of lower degree than $m(\lambda)$, contrary to the definition of the minimal polynomial.\par
$$
M_T(\lambda)=c\lambda^l(1-\lambda q(\lambda)),
$$
where $c\neq 0$ and $q(\lambda)$ is a polynomial. Then we have
$$
\tens{A}^{l+1}*q(\tens{A})=\tens{A}^l.
$$
If $l<k$, then it has contradiction to ${\rm (2)}$.
\end{proof}

\subsection{T-Drazin inverse}
We don't want to dismiss the concept of generalized inverse of tensors only to F-diagonalizable tensors. In this subsection, we will investigate the existence and properties of these kinds of inverses satisfying the equations $(16)$ and $(17)$.


For some kind class of tensors, the group inverse and the Moore-Penrose inverse are the same.
\begin{definition}\label{def2-18} {\rm (T-Range Hermitian)} Let $\tens{A}\in \mathbb{C}^{n\times n\times p}$ be a complex tensor. It is called T-range Hermitian if and only if \begin{equation}
\tens{R}(\tens{A})=\tens{R}(\tens{A}^H),
\end{equation}
or equivalently, if and only if
$$
\tens{N}(\tens{A})=\tens{N}(\tens{A}^H).
$$
\end{definition}
By the same kind of methods in matrices \cite{Ben1}, we get the following result.
\begin{theorem}\label{the2-18}
$\tens{A}^{\#}=\tens{A}^{\dag}$ if and only if $\tens{A}$ is T-range Hermitian.
\end{theorem}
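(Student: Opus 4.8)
The plan is to push the whole statement through the algebra homomorphism $\bcirc$ and reduce it to the classical matrix fact that a square matrix $M$ satisfies $M^{\#}=M^{\dag}$ precisely when $M$ is range-Hermitian (EP). First I would record that $\bcirc$ carries the three defining equations of the T-group inverse in $(17)$ and the four defining equations of the T-Moore--Penrose inverse into the corresponding matrix equations: by Lemma \ref{lem2-3}(3) it is multiplicative, so $\bcirc(\tens{A}*\tens{X}*\tens{A})=\bcirc(\tens{A})\bcirc(\tens{X})\bcirc(\tens{A})$ and similarly for the other products, and by Lemma \ref{lem2-3}(6) it commutes with conjugate transposition, $\bcirc(\tens{A}^H)=\bcirc(\tens{A})^H$. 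Combined with the injectivity of $\bcirc$ and the uniqueness of the matrix group and Moore--Penrose inverses, this shows that $\tens{A}^{\#}$ exists if and only if $\bcirc(\tens{A})^{\#}$ exists (equivalently $\ind(\bcirc(\tens{A}))=\ind_T(\tens{A})\le 1$), and in that case $\bcirc(\tens{A}^{\#})=\bcirc(\tens{A})^{\#}$; likewise $\bcirc(\tens{A}^{\dag})=(\bcirc(\tens{A}))^{\dag}$, which is already the definition in Definition \ref{def2-5}(4). Hence $\tens{A}^{\#}=\tens{A}^{\dag}$ if and only if $(\bcirc(\tens{A}))^{\#}=(\bcirc(\tens{A}))^{\dag}$.

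Next I would translate the hypothesis ``T-range Hermitian'' into a statement about ranges of matrices. By Definition \ref{def2-5}(1), $\tens{R}(\tens{A})={\rm Ran}((F_p^H\otimes I_n)\bcirc(\tens{A})(F_p\otimes I_n))$, and since $F_p\otimes I_n$ is invertible this equals $(F_p^H\otimes I_n)\,{\rm Ran}(\bcirc(\tens{A}))$; similarly $\tens{R}(\tens{A}^H)=(F_p^H\otimes I_n)\,{\rm Ran}(\bcirc(\tens{A})^H)$, using $\bcirc(\tens{A}^H)=\bcirc(\tens{A})^H$ again. Applying the invertible map $F_p\otimes I_n$ on the left, one sees that $\tens{R}(\tens{A})=\tens{R}(\tens{A}^H)$ if and only if ${\rm Ran}(\bcirc(\tens{A}))={\rm Ran}(\bcirc(\tens{A})^H)$, i.e. if and only if $\bcirc(\tens{A})$ is an EP matrix; the equivalent null-space form in Definition \ref{def2-18} transfers the same way.

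It then remains to invoke the matrix theorem: for $M\in\mathbb{C}^{np\times np}$, the group inverse exists and $M^{\#}=M^{\dag}$ if and only if ${\rm Ran}(M)={\rm Ran}(M^H)$ (see \cite{Ben1}). Applying this with $M=\bcirc(\tens{A})$ and chaining it with the two equivalences established above yields $\tens{A}^{\#}=\tens{A}^{\dag}\iff\tens{A}$ is T-range Hermitian. The main obstacle --- really the only point that requires care --- is the bookkeeping of the first paragraph: one must check that the T-group inverse is genuinely well defined (existence together with uniqueness, the latter following from the matrix uniqueness via injectivity of $\bcirc$) and that $\bcirc$ indeed intertwines it with the matrix group inverse, before the reduction is legitimate. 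Once that is settled, the rest is a transfer of known matrix facts through an injective $*$-homomorphism, together with the elementary observation that the Fourier conjugation appearing in the definition of the T-range space does not affect equality of ranges.
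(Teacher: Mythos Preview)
Your proposal is correct. Both your argument and the paper's reduce the statement to the classical matrix fact $M^{\#}=M^{\dag}\iff M$ is EP, transported through the operator $\bcirc$. The paper, however, inserts an extra layer: it Fourier block-diagonalizes $\bcirc(\tens{A})$ into blocks $A_1,\ldots,A_p$, argues that T-range-Hermitianity forces each $A_i$ to be range-Hermitian (via the direct-sum decomposition of the range), and then applies the matrix EP theorem blockwise to get $A_i^{\#}=A_i^{\dag}$ for every $i$. Your route is more streamlined: you apply the EP theorem once to the single matrix $\bcirc(\tens{A})$, using only multiplicativity and $H$-compatibility of $\bcirc$ (Lemma \ref{lem2-3}) together with the observation that the Fourier conjugation in Definition \ref{def2-5}(1) does not disturb equality of ranges. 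This buys you a shorter argument with fewer moving parts and a cleanly stated ``if and only if''; the paper's blockwise version, by contrast, is phrased essentially as a one-direction computation and relies on the reader to reverse it, but it is consistent with the slice-by-slice style used throughout the paper and makes the internal structure of $\tens{A}^{\#}$ and $\tens{A}^{\dag}$ explicit.
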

\begin{proof}
By the above proof, we find
$$
\bcirc(\tens{A}^{\#})=(F_p^H\otimes I_n)
\begin{bmatrix}
A_1^{\#}&&&\\
&A_2^{\#}&&\\
&&\ddots&\\
&&&A_p^{\#}
\end{bmatrix}(F_p\otimes I_n).
$$
Also, for Moore-Penrose inverse of $\tens{A}$, we have
$$
\bcirc(\tens{A}^{\dag})=(F_p^H\otimes I_n)
\begin{bmatrix}
A_1^{\dag}&&&\\
&A_2^{\dag}&&\\
&&\ddots&\\
&&&A_p^{\dag}
\end{bmatrix}(F_p\otimes I_n).
$$
Since $\tens{R}(\tens{A})=\tens{R}(\tens{A}^H)$ and $(F_p\otimes I_n)$ is an invertible matrix, we have
$$
{\rm Range}(A_1)\oplus{\rm Range}(A_2)\oplus\cdots\oplus{\rm Range}(A_p)={\rm Range}(A_1^H)\oplus{\rm Range}(A_2^H)\oplus\cdots\oplus{\rm Range}(A_p^H).
$$
Each matrix $A_i \ (i=1,2,\ldots, p)$ is range Hermitian.  $A_i^{\dag}=A_i^{\#}\ (i=1,2,\ldots, p)$, which comes to the result.
\end{proof}
\begin{lemma}\label{lem2-9}
Let $\tens{A}\in\mathbb{C}^{n\times n\times p}$ be a T-range Hermitian complex tensor with T-index $\ind_T(\tens{A})=1$. Let the T-Jordan canonical form of $\tens{A}$ be
$$
\tens{A}=\tens{P}^{-1}*\tens{J}*\tens{P},
$$
Then the T-index of $\tens{J}$
$$
\ind_T(\tens{J})=1.
$$
\end{lemma}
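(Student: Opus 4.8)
Lemma~\ref{lem2-9} asserts that passing to the T-Jordan canonical form preserves the property $\ind_T=1$. The plan is to translate everything to the block-circulant picture and then apply the block-diagonalization from Lemma~\ref{lem2-4}. First I would recall that $\ind_T(\tens{A})=\ind(\bcirc(\tens{A}))$ by Definition~\ref{def2-16}, and that $\bcirc(\tens{A})$ is Fourier block diagonalizable as
$$
\bcirc(\tens{A})=(F_p^H\otimes I_n)\,{\rm diag}(A_1,\dots,A_p)\,(F_p\otimes I_n).
$$
By Corollary~\ref{cor2-17}, $\ind_T(\tens{A})=\max_{1\le i\le p}\ind(A_i)$, so the hypothesis $\ind_T(\tens{A})=1$ forces $\ind(A_i)\le 1$, i.e.\ $\ind(A_i)=1$ (or $A_i$ invertible) for every $i$. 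This is the crucial reduction: the T-index-$1$ condition is exactly the statement that each Fourier block has index at most $1$.

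Next I would unwind the construction of $\tens{J}$ in the proof of Theorem~\ref{the2-1}. There $\bcirc(\tens{J})$ is the block-circulant matrix associated to the block-diagonal matrix ${\rm diag}(C^{(1)},\dots,C^{(p)})$, where each $C^{(i)}$ is the Jordan canonical form of $A_i$, i.e.\ $A_i=(P^{(i)})^{-1}C^{(i)}P^{(i)}$. Since similar matrices have the same index, $\ind(C^{(i)})=\ind(A_i)=1$ for each $i$. Now applying Corollary~\ref{cor2-17} to $\tens{J}$ — whose Fourier blocks are precisely the $C^{(i)}$ — gives
$$
\ind_T(\tens{J})=\max_{1\le i\le p}\ind(C^{(i)})=1,
$$
which is the desired conclusion. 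An alternative, even shorter route is to invoke part~(3) of Lemma~\ref{lem2-5} / Corollary~\ref{cor2-1}: since $\tens{A}$ and $\tens{J}$ are T-similar via $\tens{P}$, one has $\bcirc(\tens{J})=\bcirc(\tens{P})\bcirc(\tens{A})\bcirc(\tens{P})^{-1}$, and the matrix index is a similarity invariant, so $\ind(\bcirc(\tens{J}))=\ind(\bcirc(\tens{A}))$, hence $\ind_T(\tens{J})=\ind_T(\tens{A})=1$ directly.

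The hypothesis that $\tens{A}$ is T-range Hermitian is not actually needed for the index computation; it is presumably carried along because this lemma will be used as a stepping stone toward the T-group inverse (where range-Hermitian-ness guarantees existence), so I would simply note that it is inherited by $\tens{J}$ as well — since $\tens{R}(\tens{A})=\tens{R}(\tens{A}^H)$ transports under the invertible congruence $\bcirc(\tens{P})$ together with the identity $\bcirc(\tens{J}^H)=\bcirc(\tens{J})^H$ from Lemma~\ref{lem2-3}(6). The only mild subtlety, and the one place where I would be careful, is the bookkeeping between the two block-diagonal representations used in Theorem~\ref{the2-1} (the $B^{(i)}$, then the $C^{(i)}$): one must make sure that the block-circulant-to-block-diagonal dictionary is applied consistently so that the Fourier blocks of $\tens{J}$ really are the Jordan forms $C^{(i)}$ and not some recombination of them. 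Once that identification is pinned down, the proof is immediate from Corollary~\ref{cor2-17}.
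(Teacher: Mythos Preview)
Your proposal is correct, and in fact cleaner than the paper's own argument. You give two routes: (i) apply Corollary~\ref{cor2-17} to both $\tens{A}$ and $\tens{J}$, observing that the Fourier blocks of $\tens{J}$ are the Jordan forms $C^{(i)}$ of the Fourier blocks $A_i$ of $\tens{A}$, hence similar to them and therefore of the same index; or (ii) simply note that $\bcirc(\tens{J})$ and $\bcirc(\tens{A})$ are similar matrices and that matrix index is a similarity invariant. Either way the conclusion is immediate.

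The paper instead works more concretely inside the Jordan structure: it writes $\tens{A}^k=\tens{P}^{-1}*\tens{J}^k*\tens{P}$, block-diagonalizes $\bcirc(\tens{J}^k)$, splits each Fourier block $J_i$ as $J_i^{1}\oplus J_i^{0}$ with $J_i^{1}$ nonsingular and $J_i^{0}$ nilpotent, and then argues from $\rank_T(\tens{A})=\rank_T(\tens{A}^2)$ that each nilpotent piece $J_i^{0}$ must vanish. This is essentially a hands-on re-derivation, in this particular setting, of the fact that index is preserved under similarity --- exactly the fact you invoke as a black box. Your route (ii) is the shortest and most transparent; the paper's approach has the minor advantage of displaying explicitly \emph{why} the nilpotent parts disappear when the T-index is $1$, which foreshadows the block partition used immediately afterward in Theorem~\ref{the2-20}. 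Your observation that the T-range Hermitian hypothesis plays no role in the index computation is also accurate; the paper's proof makes no use of it either.
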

\begin{proof}
By Corollary \ref{cor2-1}, we have $\tens{A}^k=\tens{P}^{-1}*\tens{J}^k*\tens{P}$. Then it comes to
$$
\bcirc(\tens{J}^k)=(F_p^H\otimes I_n)
\begin{bmatrix}
J_1^{k}&&&\\
&J_2^{k}&&\\
&&\ddots&\\
&&&J_p^{k}
\end{bmatrix}(F_p\otimes I_n),
$$
each $J_i^k$ can be partitioned in the form
$$
J_i=\begin{bmatrix}
J_i^{1}&0\\
0&J_i^{0}
\end{bmatrix},
$$
where $J_i^{1}$ is the nonsingular block with non-zero T-eigenvalues while $J_i^{0}$ is the nilpotent part with zero T-eigenvalues. It is easy to see $\rank(J_i^{1})=\rank((J_i^{1})^2)$. From the Jordan structure of the Jordan form $\rank(J_i^{0})<\rank((J_i^{0})^2)$ unless $J_i^{0}$ is the null matrix $O$. Since $\rank_T(\tens{A})=\rank_T((\tens{A})^{2})$, so it comes to each block $J_i^{0}$ is the null matrix $O$.
$\rank_T((\tens{J})^k)=\rank_T((\tens{J})^{k+1})$ for all $k\geq 1$, which means $\ind_T(\tens{J})=1$.
\end{proof}
By the above Theorem \ref{the2-18} and Lemma \ref{lem2-9}, we have the following theorem.
\begin{theorem}\label{the2-19}
Let $\tens{A}\in\mathbb{C}^{n\times n\times p}$ be a complex tensor with T-index $1$. Let the T-Jordan canonical form of $\tens{A}$ is
$$
\tens{A}=\tens{P}^{-1}*\tens{J}*\tens{P},
$$
where $\tens{P}$ is an invertible tensor and $\tens{J}$ is the Jordan canonical form of $\tens{A}$. Then
\begin{equation}
\tens{A}^{\#}=\tens{P}^{-1}*\tens{J}^{\#}*\tens{P},
\end{equation}
where $\tens{J}^{\#}$ is the group inverse of $\tens{J}$.
\end{theorem}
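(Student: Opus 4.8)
The plan is to exhibit $\tens{P}^{-1}*\tens{J}^{\#}*\tens{P}$ as a solution of the three defining equations $(17)$ of the T-group inverse of $\tens{A}$ and then to invoke uniqueness of the T-group inverse. Before that, I would make sure $\tens{J}^{\#}$ is even defined, i.e. that $\tens{J}$ has T-index $1$. Applying $\bcirc$ to $\tens{A}=\tens{P}^{-1}*\tens{J}*\tens{P}$ and using Lemma \ref{lem2-3} gives $\bcirc(\tens{A})=\bcirc(\tens{P})^{-1}\bcirc(\tens{J})\bcirc(\tens{P})$, so $\bcirc(\tens{A})$ and $\bcirc(\tens{J})$ are similar matrices and therefore have the same matrix index; hence $\ind_T(\tens{J})=\ind(\bcirc(\tens{J}))=\ind(\bcirc(\tens{A}))=\ind_T(\tens{A})=1$, which guarantees that $\tens{J}^{\#}$ exists. (In the T-range Hermitian case one may instead quote Lemma \ref{lem2-9}, but the similarity argument forces $\ind_T(\tens{J})=1$ in general; alternatively, Fourier block-diagonalize $\bcirc(\tens{A})$ and $\bcirc(\tens{J})$ as in the proof of Theorem \ref{the2-18} and apply Corollary \ref{cor2-17} to get $\ind_T(\tens{J})=\max_i\ind(J_i)=\max_i\ind(A_i)=1$.)

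Next I would set $\tens{X}:=\tens{P}^{-1}*\tens{J}^{\#}*\tens{P}$ and verify the three group-inverse equations by direct substitution, using only associativity of the T-product, $\tens{P}*\tens{P}^{-1}=\tens{P}^{-1}*\tens{P}=\tens{I}$, and the fact that $\tens{J}^{\#}$ is the group inverse of $\tens{J}$. Each adjacent $\tens{P}$--$\tens{P}^{-1}$ pair collapses, so that $\tens{A}*\tens{X}*\tens{A}=\tens{P}^{-1}*(\tens{J}*\tens{J}^{\#}*\tens{J})*\tens{P}=\tens{P}^{-1}*\tens{J}*\tens{P}=\tens{A}$, and identically $\tens{X}*\tens{A}*\tens{X}=\tens{X}$; finally $\tens{A}*\tens{X}=\tens{P}^{-1}*(\tens{J}*\tens{J}^{\#})*\tens{P}=\tens{P}^{-1}*(\tens{J}^{\#}*\tens{J})*\tens{P}=\tens{X}*\tens{A}$, using that $\tens{J}$ commutes with $\tens{J}^{\#}$. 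Thus $\tens{X}$ solves $(17)$, and since the T-group inverse is unique whenever it exists, $\tens{A}^{\#}=\tens{X}=\tens{P}^{-1}*\tens{J}^{\#}*\tens{P}$. (One could also dispense with uniqueness and simply declare $\tens{X}$ to be the group inverse of $\tens{A}$ once it has been shown to satisfy the three equations.)

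I expect the only genuinely substantive point to be the preliminary one: confirming that passing to the T-Jordan form does not change the T-index, so that $\tens{J}^{\#}$ is well-defined. Once that is secured, Steps two and three are a one-line cancellation and an appeal to uniqueness, so the bulk of the proof is bookkeeping with the T-product rather than any new idea.
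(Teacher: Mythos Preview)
Your argument is correct. The paper itself does not supply a written-out proof of this theorem; it simply says ``By the above Theorem \ref{the2-18} and Lemma \ref{lem2-9}'' and states the result. The intended route, judging from those citations, is the Fourier block-diagonal one: use the construction in the proof of Theorem \ref{the2-18} to write $\bcirc(\tens{A}^{\#})=(F_p^H\otimes I_n)\,\mathrm{diag}(A_1^{\#},\ldots,A_p^{\#})\,(F_p\otimes I_n)$, invoke Lemma \ref{lem2-9} to guarantee each $J_i$ has index $1$, and then use similarity-invariance of the matrix group inverse blockwise.

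Your proof takes a cleaner, more intrinsic route: you establish $\ind_T(\tens{J})=1$ directly from similarity of $\bcirc(\tens{A})$ and $\bcirc(\tens{J})$ (which is in fact more general than Lemma \ref{lem2-9}, since you do not need the T-range Hermitian hypothesis that the lemma formally assumes but never uses), and then you verify the three defining equations of $(17)$ at the tensor level by cancelling $\tens{P}*\tens{P}^{-1}$ pairs. This avoids the Fourier machinery entirely and makes the similarity-invariance of the T-group inverse transparent. The one point you rely on implicitly is uniqueness of the T-group inverse; the paper asserts this later (in Theorem \ref{the2-20} for the T-Drazin inverse), and it follows by the standard matrix argument applied to $\bcirc(\tens{A})$, so there is no gap.
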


Now we consider the $\{1^k,2,5\}$-inverse $\tens{X}$, that is
$$
\tens{A}^k*\tens{X}*\tens{A}=\tens{A}^k,
\quad
\tens{X}*\tens{A}*\tens{X}=\tens{X}
\quad
\tens{A}*\tens{X}=\tens{X}*\tens{A}.
$$
We call $\tens{X}$ satisfying the above three equations T-Drazin inverse of $\tens{A}$, denoted by $\tens{A}^D$. By the same kind of construction as group inverse, we directly give the expression of T-Drazin inverse.
\begin{theorem}\label{the2-20}{\rm (T-Drazin inverse)} Let $\tens{A}\in \mathbb{C}^{n\times n\times p}$ be a complex tensor which has the T-Jordan canonical form
$$
\tens{A}=\tens{P}^{-1}*\tens{J}*\tens{P}.
$$
Then the T-Drazin inverse is given by
\begin{equation}
\tens{A}^D=\tens{P}^{-1}*\tens{J}^D*\tens{P},
\end{equation}
where $\tens{J}^D$ is given as follows. Suppose the matrix $\bcirc(\tens{J})$ has decomposition,
$$
\bcirc(\tens{J})=(F_p^H\otimes I_n)
\begin{bmatrix}
J_1&&&\\
&J_2&&\\
&&\ddots&\\
&&&J_p
\end{bmatrix}(F_p\otimes I_n),
$$
each block $J_i$ can be partitioned as
$
J_i=\begin{bmatrix}
J_i^{1}&0\\
0&J_i^{0}
\end{bmatrix},
$
denote $J_i^D=
J_i=\begin{bmatrix}
(J_i^{1})^{-1}&0\\
0&0
\end{bmatrix}
$ to be the Drazin inverse of the matrix $J_i$.
Then we define the T-Drazin inverse $\tens{J}^D$ as
$$
\bcirc(\tens{J}^D)=(F_p^H\otimes I_n)
\begin{bmatrix}
J_1^D&&&\\
&J_2^D&&\\
&&\ddots&\\
&&&J_p^D
\end{bmatrix}(F_p\otimes I_n).
$$
This is the unique solution to the above three equations and the T-group inverse is the particular case of T-Drazin inverse for tensors with T-index $1$.
\end{theorem}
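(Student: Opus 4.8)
The plan is to push the entire statement through the algebra homomorphism $\bcirc$ and then invoke the corresponding facts about the ordinary matrix Drazin inverse. First I would recall from Lemma~\ref{lem2-3} that $\bcirc$ is multiplicative, $\bcirc(\tens{A}*\tens{B})=\bcirc(\tens{A})\bcirc(\tens{B})$, and injective, and that $\bcirc(\tens{A}^j)=\bcirc(\tens{A})^j$. Consequently each of the three defining equations of the T-Drazin inverse, namely $\tens{A}^k*\tens{X}*\tens{A}=\tens{A}^k$, $\tens{X}*\tens{A}*\tens{X}=\tens{X}$ and $\tens{A}*\tens{X}=\tens{X}*\tens{A}$, is equivalent to the same equation with every tensor replaced by its $\bcirc$-image. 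Hence a tensor $\tens{X}$ is a T-Drazin inverse of $\tens{A}$ exactly when the matrix $\bcirc(\tens{X})$ is a Drazin inverse of the matrix $\bcirc(\tens{A})$, where the relevant exponent is $k=\ind_T(\tens{A})=\ind(\bcirc(\tens{A}))$ by Definition~\ref{def2-16} and Theorem~\ref{the2-16}.

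Next I would identify the matrix $(\bcirc(\tens{A}))^D$ explicitly. Writing $\bcirc(\tens{A})=\bcirc(\tens{P})^{-1}\bcirc(\tens{J})\bcirc(\tens{P})$ and using similarity-invariance of the matrix Drazin inverse, $(SMS^{-1})^D=SM^DS^{-1}$, reduces the task to computing $(\bcirc(\tens{J}))^D$. From the Fourier block-diagonalization $\bcirc(\tens{J})=(F_p^H\otimes I_n)\,{\rm diag}(J_1,\dots,J_p)\,(F_p\otimes I_n)$, one more application of similarity-invariance together with the elementary block-diagonal rule $({\rm diag}(J_1,\dots,J_p))^D={\rm diag}(J_1^D,\dots,J_p^D)$ gives $(\bcirc(\tens{J}))^D=(F_p^H\otimes I_n)\,{\rm diag}(J_1^D,\dots,J_p^D)\,(F_p\otimes I_n)=\bcirc(\tens{J}^D)$, which is precisely the tensor $\tens{J}^D$ introduced in the statement; here $J_i^D$ is the usual matrix Drazin inverse read off from the partition of $J_i$ into its invertible block $J_i^1$ and its nilpotent block $J_i^0$. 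Combining the two reductions yields $(\bcirc(\tens{A}))^D=\bcirc(\tens{P})^{-1}\bcirc(\tens{J}^D)\bcirc(\tens{P})=\bcirc(\tens{P}^{-1}*\tens{J}^D*\tens{P})$.

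It then remains to check that this candidate is a genuine tensor and that it is the unique solution. For well-definedness I would use the standard fact that the matrix Drazin inverse $(\bcirc(\tens{A}))^D$ is a polynomial in $\bcirc(\tens{A})$; since $\bcirc(\tens{A})$ is block circulant, Lemma~\ref{lem2-2} shows $(\bcirc(\tens{A}))^D$ is again block circulant, so $\tens{A}^D:=\bcirc^{-1}\big((\bcirc(\tens{A}))^D\big)$ is a well-defined element of $\mathbb{C}^{n\times n\times p}$, equal to $\tens{P}^{-1}*\tens{J}^D*\tens{P}$ by the previous paragraph. Existence follows because $(\bcirc(\tens{A}))^D$ satisfies the three matrix equations, and uniqueness follows because any tensor $\tens{X}$ solving the three tensor equations forces $\bcirc(\tens{X})$ to solve the three matrix equations, hence $\bcirc(\tens{X})=(\bcirc(\tens{A}))^D$ and $\tens{X}=\tens{A}^D$ by injectivity of $\bcirc$. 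Finally, when $\ind_T(\tens{A})=1$, Corollary~\ref{cor2-17} gives $\ind(J_i)=1$ for every $i$, so each nilpotent block $J_i^0$ is absent and $J_i^D$ collapses to the group inverse $J_i^\#$; then $\tens{J}^D=\tens{J}^\#$ and $\tens{A}^D=\tens{P}^{-1}*\tens{J}^\#*\tens{P}=\tens{A}^\#$, recovering Theorem~\ref{the2-19}.

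The hard part is not conceptual but bookkeeping: one must be careful that the exponent $k$ in $\tens{A}^k*\tens{X}*\tens{A}=\tens{A}^k$ is taken at least equal to $\ind_T(\tens{A})$ (for smaller $k$ no solution exists, by Theorem~\ref{the2-16}), and that the similarity-invariance and block-diagonal rules for the matrix Drazin inverse are applied to the two distinct conjugators $\bcirc(\tens{P})$ and $F_p\otimes I_n$ in the correct order; once these are handled, the whole argument is a routine transfer through $\bcirc$.
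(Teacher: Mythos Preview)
Your proposal is correct and follows essentially the same route that the paper indicates. In the paper this theorem is stated without a detailed proof; the authors simply write ``by the same kind of construction as group inverse, we directly give the expression of T-Drazin inverse,'' i.e., they rely on the Fourier block-diagonalization of $\bcirc(\tens{J})$ and the block-wise matrix Drazin inverse, exactly as you do. Your write-up in fact supplies more detail than the paper itself (the polynomial argument for block-circulant preservation, the explicit uniqueness via injectivity of $\bcirc$, and the reduction to the group-inverse case via Corollary~\ref{cor2-17}), so nothing is missing.
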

By the construction of T-Drazin inverse, we have the following corollary.
\begin{corollary}\label{cor2-20}
The T-Drazin inverse preserves similarity: Let $\tens{A}\in\mathbb{C}^{n\times n\times p}$ be a complex tensor. If $\tens{X}\in\mathbb{C}^{n\times n\times p}$ be a nonsingular tensor and
$$
\tens{A}=\tens{X}^{-1}*\tens{B}*\tens{X},
$$
then
$$
\tens{A}^D=\tens{X}^{-1}*\tens{B}^D*\tens{X}, \quad
\tens{A}^D*(\tens{A}^D)^{\#}=\tens{A}*\tens{A}^D.
$$
\end{corollary}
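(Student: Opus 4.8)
The plan is to prove the two assertions separately, reducing each to matrix Drazin-inverse theory through the homomorphism $\bcirc$ of Lemma \ref{lem2-3} and the Fourier block-diagonalization appearing in Theorem \ref{the2-20}. For the first identity I would set $\tens{Y}:=\tens{X}^{-1}*\tens{B}^{D}*\tens{X}$ and verify that $\tens{Y}$ satisfies the three equations defining the T-Drazin inverse of $\tens{A}$ (those displayed just before Theorem \ref{the2-20}); the uniqueness asserted there then forces $\tens{A}^{D}=\tens{Y}$. Because $\tens{A}=\tens{X}^{-1}*\tens{B}*\tens{X}$ gives $\tens{A}^{\ell}=\tens{X}^{-1}*\tens{B}^{\ell}*\tens{X}$ for every $\ell\ge 1$, each of $\tens{A}^{k}*\tens{Y}*\tens{A}=\tens{A}^{k}$, $\tens{Y}*\tens{A}*\tens{Y}=\tens{Y}$ and $\tens{A}*\tens{Y}=\tens{Y}*\tens{A}$ collapses, once the interior $\tens{X}*\tens{X}^{-1}$ factors cancel, to the corresponding identity for $\tens{B}^{D}$, which holds since $\tens{B}^{D}$ is the T-Drazin inverse of $\tens{B}$. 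Equivalently, if $\tens{B}=\tens{Q}^{-1}*\tens{J}*\tens{Q}$ is a T-Jordan decomposition then $\tens{A}=(\tens{Q}*\tens{X})^{-1}*\tens{J}*(\tens{Q}*\tens{X})$, and the explicit formula of Theorem \ref{the2-20} gives $\tens{A}^{D}=\tens{X}^{-1}*\tens{Q}^{-1}*\tens{J}^{D}*\tens{Q}*\tens{X}=\tens{X}^{-1}*\tens{B}^{D}*\tens{X}$.

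For the identity $\tens{A}^{D}*(\tens{A}^{D})^{\#}=\tens{A}*\tens{A}^{D}$, I would first observe that $\tens{A}^{D}$ has $\ind_T(\tens{A}^{D})\le 1$: by the construction in Theorem \ref{the2-20} its Fourier blocks are matrix Drazin inverses, each of index at most $1$, so Corollary \ref{cor2-17} applies and the T-group inverse $(\tens{A}^{D})^{\#}$ is well defined. Next, exactly as in the proofs of Theorems \ref{the2-18}--\ref{the2-20}, applying $\bcirc$ and Fourier block-diagonalizing $\bcirc(\tens{A})=(F_p^H\otimes I_n)\,{\rm diag}(A_1,\dots,A_p)\,(F_p\otimes I_n)$ (using the similarity invariance of the matrix Drazin and group inverses to absorb the conjugation by $\bcirc(\tens{P})$) turns $\bcirc\big(\tens{A}^{D}*(\tens{A}^{D})^{\#}\big)$ into the Fourier block form ${\rm diag}\big(A_i^{D}(A_i^{D})^{\#}\big)$ and $\bcirc(\tens{A}*\tens{A}^{D})$ into ${\rm diag}\big(A_iA_i^{D}\big)$. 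By injectivity of $\bcirc$ it then suffices to prove the matrix identity $M^{D}(M^{D})^{\#}=MM^{D}$ for an arbitrary square complex matrix $M$.

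That matrix identity is the only nontrivial point, and it follows from the classical fact $(M^{D})^{\#}=M^{2}M^{D}$ (see \cite{Ben1,2Campbell1}): granting it, $M^{D}(M^{D})^{\#}=M^{D}M^{2}M^{D}=M^{2}(M^{D})^{2}=(MM^{D})^{2}=MM^{D}$, using that powers of $M$ and $M^{D}$ commute and that $MM^{D}$ is idempotent. If a self-contained route is preferred, $(M^{D})^{\#}=M^{2}M^{D}$ is itself checked by verifying the three group-inverse equations for $M^{2}M^{D}$ against $M^{D}$, each of which reduces to the elementary relations $M^{D}=M(M^{D})^{2}$ and $(MM^{D})^{2}=MM^{D}$. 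Beyond this, nothing tensor-specific is involved: the remaining work is bookkeeping with $\bcirc$ and the discrete Fourier transform, exactly as in the proofs of Theorems \ref{the2-19} and \ref{the2-20}.
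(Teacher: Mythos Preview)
Your proposal is correct. The paper itself supplies no proof for this corollary: it is stated immediately after Theorem~\ref{the2-20} with only the sentence ``By the construction of T-Drazin inverse, we have the following corollary.'' Your second route for the first identity---writing a T-Jordan form of $\tens{B}$ and composing with $\tens{X}$ to get one for $\tens{A}$, then reading off the Drazin inverse from Theorem~\ref{the2-20}---is exactly what that sentence gestures at. Your first route (direct verification of the three defining equations plus uniqueness) is an equally valid and slightly more self-contained alternative; the only implicit point you are using there is that $\ind_T(\tens{A})=\ind_T(\tens{B})$, which follows at once from Corollary~\ref{cor2-17} since similar matrices share index.

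For the second identity $\tens{A}^{D}*(\tens{A}^{D})^{\#}=\tens{A}*\tens{A}^{D}$ the paper gives nothing at all, so your argument is not merely matching but actually filling a gap. The reduction to the blockwise matrix identity $M^{D}(M^{D})^{\#}=MM^{D}$ via $\bcirc$ and Fourier diagonalization is the natural move in this framework, and your computation from $(M^{D})^{\#}=M^{2}M^{D}$ is correct. One small remark: you cite the classical fact $(M^{D})^{\#}=M^{2}M^{D}$; strictly this is usually stated as $(M^{D})^{D}=M^{2}M^{D}$, and then your earlier observation that $\ind(M^{D})\le 1$ lets you replace $D$ by $\#$. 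You have all the pieces for this, so it is only a matter of ordering the sentences.
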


\subsection{T-core-nilpotent decomposition}
There is another important factorization of tensors based on the T-Drazin inverse, and that is the T-core-nilpotent decomposition.
\begin{definition}\label{def2-20} {\rm (T-Core)} Let $\tens{A}\in\mathbb{C}^{n\times n\times p}$ be a complex tensor. The product
$$
\tens{C}_{\tens{A}}=\tens{A}*\tens{A}^D*\tens{A}=\tens{A}^2*\tens{A}^D=\tens{A}^D*\tens{A}^2
$$
is called the T-core of tensor $\tens{A}$.
\end{definition}
Intuitively, the T-core contains the most of the basic structure of tensor $\tens{A}$. If the T-core is removed from the tensor, then not much information of $\tens{A}$ will remain. The next theorem shows in what sense this is true.
\begin{theorem}\label{the2-25} Let $\tens{A}\in\mathbb{C}^{n\times n\times p}$ be a complex tensor, $\tens{C}_{\tens{A}}$ is the T-core of tensor $\tens{A}$. Then
$$
\tens{N}_{\tens{A}}=\tens{A}-\tens{C}_{\tens{A}}
$$
is a nilpotent tensor of T-index $k=\ind_T(\tens{A})$.
\end{theorem}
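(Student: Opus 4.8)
The plan is to push everything through the operator $\bcirc$ into the Fourier block–diagonal picture, where $\bcirc(\tens{A})$ is similar (via $(F_p\otimes I_n)$) to $\mathrm{diag}(A_1,\dots,A_p)$, and then reduce the claim to the classical matrix core–nilpotent decomposition applied to each block $A_i$. The T-index is then read off from Corollary~\ref{cor2-17}.

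Concretely, I would first use Definition~\ref{def2-20} to write $\tens{N}_{\tens{A}}=\tens{A}-\tens{C}_{\tens{A}}=\tens{A}-\tens{A}^2*\tens{A}^D$, and then apply $\bcirc$. By parts (3)--(4) of Lemma~\ref{lem2-3}, $\bcirc$ is additive and multiplicative, so
$$
\bcirc(\tens{N}_{\tens{A}})=\bcirc(\tens{A})-\bcirc(\tens{A})^2\,\bcirc(\tens{A}^D).
$$
Writing $\bcirc(\tens{A})=(F_p^H\otimes I_n)\,\mathrm{diag}(A_1,\dots,A_p)\,(F_p\otimes I_n)$ and inserting the block form of $\bcirc(\tens{A}^D)$ from Theorem~\ref{the2-20} (whose $i$-th block is the matrix Drazin inverse $A_i^D$), this collapses to
$$
\bcirc(\tens{N}_{\tens{A}})=(F_p^H\otimes I_n)\,\mathrm{diag}(N_1,\dots,N_p)\,(F_p\otimes I_n),\qquad N_i:=A_i-A_i^2A_i^D .
$$
Now I invoke the matrix core–nilpotent decomposition: splitting $A_i$ into its nonsingular Jordan part and its nilpotent Jordan part, $A_i^2A_i^D$ is exactly the core of $A_i$, so $N_i$ is the nilpotent part of $A_i$ and has nilpotency index $\ind(A_i)$. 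Raising to a power preserves the block structure, $\bcirc(\tens{N}_{\tens{A}})^s=(F_p^H\otimes I_n)\,\mathrm{diag}(N_1^s,\dots,N_p^s)\,(F_p\otimes I_n)$, and since $(F_p\otimes I_n)$ is invertible this is the zero matrix iff every $N_i^s=O$, i.e.\ iff $s\ge\max_{1\le i\le p}\ind(A_i)$. By Corollary~\ref{cor2-17} that maximum is precisely $\ind_T(\tens{A})=k$. Hence $\bcirc(\tens{N}_{\tens{A}})^k=O$ while $\bcirc(\tens{N}_{\tens{A}})^{k-1}\neq O$; applying $\bcirc^{-1}$ gives $\tens{N}_{\tens{A}}^k=\tens{O}$ and $\tens{N}_{\tens{A}}^{k-1}\neq\tens{O}$, so $\tens{N}_{\tens{A}}$ is nilpotent, and since for a nilpotent tensor the nilpotency index coincides with the T-index, $\ind_T(\tens{N}_{\tens{A}})=k$.

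The routine parts (commutativity $\tens{A}*\tens{A}^D=\tens{A}^D*\tens{A}$, so $\tens{A}*\tens{A}^D*\tens{A}=\tens{A}^2*\tens{A}^D$; the explicit block form of $\bcirc(\tens{A}^D)$) come straight from the preceding sections. The one step demanding genuine care is the block-wise identification: one must verify that $N_i=A_i-A_i^2A_i^D$ really is the nilpotent part of $A_i$ with index exactly $\ind(A_i)$ — this is where the matrix core–nilpotent decomposition does the work — and that, because conjugation by $(F_p\otimes I_n)$ is a similarity, a power of $\bcirc(\tens{N}_{\tens{A}})$ vanishes exactly when each block-power vanishes. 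I expect that block-wise step (plus pinning down $\tens{N}_{\tens{A}}^{k-1}\neq\tens{O}$ to get equality, not just an inequality, for the index) to be the main obstacle; the rest is bookkeeping through $\bcirc$.
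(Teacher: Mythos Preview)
Your proof is correct, but it takes a genuinely different route from the paper's. The paper argues directly at the tensor level without ever passing to the Fourier block-diagonal picture: it writes $\tens{N}_{\tens{A}}=\tens{A}*(\tens{I}-\tens{A}*\tens{A}^D)$, uses that $\tens{A}$ and $\tens{A}^D$ commute and that $\tens{I}-\tens{A}*\tens{A}^D$ is idempotent to obtain $(\tens{N}_{\tens{A}})^l=\tens{A}^l*(\tens{I}-\tens{A}*\tens{A}^D)=\tens{A}^l-\tens{A}^{l+1}*\tens{A}^D$, and then reads off nilpotency and the exact index from the defining Drazin relation $\tens{A}^{k+1}*\tens{A}^D=\tens{A}^k$ (which fails for $l<k$ by minimality of $k$). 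Your approach instead pushes everything through $\bcirc$, block-diagonalizes, and invokes the matrix core--nilpotent decomposition on each block $A_i$, recovering the index via Corollary~\ref{cor2-17}. The paper's argument is shorter and intrinsic (it would work verbatim in any associative setting where a Drazin inverse exists), while yours is more structural and makes the reduction to the classical matrix case fully explicit; both reach the same conclusion with comparable rigor. One small remark: you cite parts (3)--(4) of Lemma~\ref{lem2-3} for additivity of $\bcirc$, but those parts only give multiplicativity; additivity is of course immediate from the definition, but you should say so rather than point to the wrong item.
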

\begin{proof}
If $\ind_T(\tens{A})=\tens{O}$, then it is invertible, and $\tens{N}_{\tens{A}}=\tens{O}$ is of T-index $0$. So we assume $\ind_T(\tens{A})\geq 1$, since
$$
(\tens{N}_{\tens{A}})^k=(\tens{A}-\tens{A}*\tens{A}^D*\tens{A})^k=\tens{A}^k *(\tens{I}
-\tens{A}*\tens{A}^D)=\tens{O}.
$$
On the other hand,
$$
\tens{A}^l-\tens{A}^{l+1}*\tens{A}^D\neq \tens{O}, \quad l<k,
$$
it comes to $\ind_T(\tens{N}_{\tens{A}})=k$.
\end{proof}

\begin{definition}\label{def2-21} {\rm (T-core-nilpotent decompotion)} Let $\tens{A}\in\mathbb{C}^{n\times n\times p}$ be a complex tensor, we call
$$
\tens{N}_{\tens{A}}=\tens{A}-\tens{C}_{\tens{A}}=(\tens{I}-\tens{A}*\tens{A}^D)*\tens{A}
$$
the T-nilpotent part of $\tens{A}$ and
$$
\tens{A}=\tens{C}_{\tens{A}}+\tens{N}_{\tens{A}}
$$
the T-core-nilpotent decomposition of $\tens{A}$.
\end{definition}
Now we give the construction of T-core-nilpotent decomposition of a tensor. Suppose tensor $\tens{A}\in\mathbb{C}^{n\times n\times p}$ is of T-index $\ind_T(\tens{A})=k$ and has the T-Jordan decomposition $\tens{A}=\tens{P}^{-1}*\tens{J}*\tens{P}$, where
$$
\bcirc(\tens{J})=(F_p^H\otimes I_n)
\begin{bmatrix}
J_1&&&\\
&J_2&&\\
&&\ddots&\\
&&&J_p
\end{bmatrix}(F_p\otimes I_n),
$$
where each $J_i$ can be block partitioned as
$$
J_i=\begin{bmatrix}
C_i&0\\
0&N_i
\end{bmatrix}=\begin{bmatrix}
C_i&0\\
0&0
\end{bmatrix}+\begin{bmatrix}
0&0\\
0&N_i
\end{bmatrix}=J_i^C+J_i^N,
$$
where $C_i$ is a non-singular matrix and $N_i$ is nilpotent with
$$
\max_{1\leq i\leq p}\{\ind(N_i)\}=k,
$$
then $\bcirc(\tens{J})=\bcirc(\tens{J}^C)+\bcirc(\tens{J}^N)$, that is
$$
\tens{A}=\tens{P}^{-1}*\tens{J}*\tens{P}=\tens{P}^{-1}*(\tens{J}^C+\tens{J}^N)*\tens{P}=\tens{C}_{\tens{A}}+\tens{N}_{\tens{A}},
$$
which is the construction of T-core-nilpotent decomposition of $\tens{A}$.
By combining the T-core-nilpotent decomposition and T-index 1-nilpotent decomposition, we find $\tens{B}=(\tens{A}^D)^{\#}$ is the T-core of tensor $\tens{A}$ and the T-core-nilpotent decomposition is unique.

The following theorem tells when the T-Drazin inverse reduces to the T-Moore-Penrose inverse.
\begin{corollary}\label{cor2-25}
Let $\tens{A}\in\mathbb{C}^{n\times n\times p}$ be a complex tensor. Then $\tens{A}^D=\tens{A}^{\dag}$ if and only if $\tens{A}*\tens{A}^{\dag}=\tens{A}^{\dag}*\tens{A}$¡£
\end{corollary}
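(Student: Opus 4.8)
The plan is to descend to the Fourier block-diagonal picture and reduce the equivalence to the corresponding fact about ordinary matrices, one frontal block at a time. Write $\bcirc(\tens{A})=(F_p^H\otimes I_n)\,{\rm diag}(A_1,\dots,A_p)\,(F_p\otimes I_n)$ with $F_p$ unitary. By Definition~\ref{def2-5}(4), $\bcirc(\tens{A}^{\dag})=(\bcirc(\tens{A}))^{\dag}$, and since conjugation by the unitary $F_p\otimes I_n$ commutes with the Moore--Penrose inverse and with taking block-diagonal parts, $\bcirc(\tens{A}^{\dag})=(F_p^H\otimes I_n)\,{\rm diag}(A_1^{\dag},\dots,A_p^{\dag})\,(F_p\otimes I_n)$; this is exactly the computation carried out in the proof of Theorem~\ref{the2-18}. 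In the same way, the construction of the T-Drazin inverse in Theorem~\ref{the2-20} (together with Corollary~\ref{cor2-20} on similarity invariance of $\tens{A}^{D}$) gives $\bcirc(\tens{A}^{D})=(F_p^H\otimes I_n)\,{\rm diag}(A_1^{D},\dots,A_p^{D})\,(F_p\otimes I_n)$. Because $\bcirc$ is injective and both block decompositions are written with the \emph{same} unitary $F_p\otimes I_n$, we get $\tens{A}^{D}=\tens{A}^{\dag}$ if and only if $A_i^{D}=A_i^{\dag}$ for all $i=1,\dots,p$. Likewise, by Lemma~\ref{lem2-3}(3), $\tens{A}*\tens{A}^{\dag}=\tens{A}^{\dag}*\tens{A}$ is equivalent to $\bcirc(\tens{A})\bcirc(\tens{A}^{\dag})=\bcirc(\tens{A}^{\dag})\bcirc(\tens{A})$, and, comparing blocks, this is equivalent to $A_iA_i^{\dag}=A_i^{\dag}A_i$ for all $i$.

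It then remains to invoke the classical matrix statement (see \cite{Ben1}): for a square matrix $M$, one has $M^{D}=M^{\dag}$ if and only if $MM^{\dag}=M^{\dag}M$. For completeness I would recall the short argument: if $M^{D}=M^{\dag}$ then $MM^{D}M=MM^{\dag}M=M$, so ${\rm Ind}(M)\le1$ and hence $M^{D}=M^{\#}$; now $M^{\#}=M^{\dag}$ is precisely the condition that $M$ be range Hermitian, which is equivalent to $MM^{\dag}=M^{\dag}M$. Conversely, $MM^{\dag}=M^{\dag}M$ makes $M$ range Hermitian, whence ${\rm Ind}(M)\le1$ and $M^{\#}=M^{D}=M^{\dag}$. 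Applying this with $M=A_i$ for each $i$ and chaining the equivalences of the previous paragraph,
$$
\tens{A}^{D}=\tens{A}^{\dag}\iff A_i^{D}=A_i^{\dag}\ (\forall i)\iff A_iA_i^{\dag}=A_i^{\dag}A_i\ (\forall i)\iff \tens{A}*\tens{A}^{\dag}=\tens{A}^{\dag}*\tens{A},
$$
which is the assertion.

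The only genuinely delicate point is the slice-wise matrix fact, and inside it the index bookkeeping showing that $M^{D}=M^{\dag}$ cannot occur unless ${\rm Ind}(M)\le1$ (so that the Drazin inverse collapses to the group inverse, bringing Theorem~\ref{the2-18} into play); everything tensorial here is the routine transfer of block-diagonalizations through $\bcirc$ already used repeatedly above. One should also check that the two displayed block forms of $\bcirc(\tens{A}^{D})$ and $\bcirc(\tens{A}^{\dag})$ are written with the same factors $F_p^H\otimes I_n$ and $F_p\otimes I_n$, so that equality of the tensors is genuinely equivalent to blockwise equality of the $A_i^{D}$ and $A_i^{\dag}$.
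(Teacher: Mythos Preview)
Your argument is correct, but it takes a noticeably different route from the paper's own proof. You pass to the Fourier block-diagonal picture, reduce both equalities to their slice-wise matrix analogues, and then invoke the classical matrix fact $M^{D}=M^{\dag}\iff MM^{\dag}=M^{\dag}M$. The paper, by contrast, argues entirely at the tensor level in two lines: for the converse direction it simply uses that the T-Drazin inverse commutes with $\tens{A}$ (one of its defining equations), so $\tens{A}*\tens{A}^{\dag}=\tens{A}*\tens{A}^{D}=\tens{A}^{D}*\tens{A}=\tens{A}^{\dag}*\tens{A}$; for the forward direction it observes that if $\tens{A}*\tens{A}^{\dag}=\tens{A}^{\dag}*\tens{A}$ then $\tens{A}^{\dag}$ already satisfies all three $\{1,2,5\}$ equations, hence $\tens{A}^{\dag}=\tens{A}^{\#}=\tens{A}^{D}$ by uniqueness.

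The trade-off is this: the paper's proof is shorter and never leaves the tensor setting, exploiting only the axiomatic definitions of $\tens{A}^{\dag}$ and $\tens{A}^{D}$; your proof is longer but makes the reduction to matrix theory completely explicit, and in particular your detour through ${\rm Ind}(M)\le 1$ and range-Hermitianness spells out \emph{why} the group inverse coincides with the Drazin inverse here, which the paper leaves implicit. Note also that your converse direction (``if $M^{D}=M^{\dag}$ then \ldots'') is more roundabout than needed: you could simply cite that $M^{D}$ commutes with $M$, exactly as the paper does at the tensor level, rather than first deducing ${\rm Ind}(M)\le 1$ and then invoking the EP characterization.
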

\begin{proof}
If $\tens{A}*\tens{A}^{\dag}=\tens{A}^{\dag}*\tens{A}$, then $\tens{A}^{\dag}$ is the $\{1,2,5\}$-inverse of $\tens{A}$, then $\tens{A}^{\dag}=\tens{A}^{\#}=\tens{A}^D$. Conversely, if $\tens{A}^D=\tens{A}^{\dag}$, then $\tens{A}*\tens{A}^{\dag}=\tens{A}*\tens{A}^D=\tens{A}^D*\tens{A}=\tens{A}^{\dag}*\tens{A}$.
\end{proof}
It can be shown that the T-Drazin inverse can be expressed by a limiting formula.
\begin{definition}\label{def2-22}
Let $\tens{A}\in\mathbb{C}^{n\times n\times p}$ be a complex tensor, $\tens{C}_{\tens{A}}$ be the T-core of $\tens{A}$ and $\tens{N}_{\tens{A}}$ be the T-nilpotent of $\tens{A}$. For integers $m\geq -1$, we define
$$
\tens{C}_{\tens{A}}^{(m)}=\tens{A}^{m+1}*\tens{A}^D=\begin{cases}
\tens{A}^D, \quad  &m=-1,\\
\tens{A}*\tens{A}^D,\quad  &m=0,\\
\tens{C}_{\tens{A}}^m, \quad  &m\geq 1.
\end{cases}
$$
and
$$
\tens{N}_{\tens{A}}^{(m)}=\begin{cases}
\tens{O}, \quad  &m=-1,\\
\tens{A}^m-\tens{C}_{\tens{A}}^{(m)},\quad &m\geq 0,
\end{cases}=
\begin{cases}
\tens{O}, \quad &m=-1,\\
\tens{I}-\tens{A}*\tens{A}^D,\quad &m=0,\\
\tens{N}_{\tens{A}}^m, \quad &m\geq 1.
\end{cases}
$$
\end{definition}
\begin{theorem}\label{the2-26} Let $\tens{A}\in\mathbb{C}^{n\times n\times p}$ be a complex tensor and $\ind_T(\tens{A})=k$. For every integer $l\geq k$,
$$
\tens{A}^D=\lim_{z\rightarrow 0} (\tens{A}^{l+1}+z\tens{I})^{-1}*\tens{A}^l.
$$
For every integer $l\geq 0$,
$$
\tens{A}^D=\lim_{z\rightarrow 0} (\tens{A}^{l+1}+z\tens{I})^{-1}*\tens{C}_{\tens{A}}^{(l)}.
$$
\end{theorem}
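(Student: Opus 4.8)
The plan is to transport the whole identity through the block-circulant operator $\bcirc$ and the Fourier block-diagonalization, thereby reducing it to the classical limiting formula for the Drazin inverse of a matrix. First I would use Lemma \ref{lem2-3} together with the linearity of $\bcirc$ to get
$$
\bcirc\!\big((\tens{A}^{l+1}+z\tens{I})^{-1}*\tens{A}^l\big)=\big(\bcirc(\tens{A})^{l+1}+zI\big)^{-1}\bcirc(\tens{A})^l ,
$$
and then, inserting the Fourier factorization $\bcirc(\tens{A})=(F_p^H\otimes I_n)\,\mathrm{diag}(A_1,\dots,A_p)\,(F_p\otimes I_n)$, rewrite the right-hand side as
$$
(F_p^H\otimes I_n)\,\mathrm{diag}\!\big((A_1^{l+1}+zI)^{-1}A_1^l,\dots,(A_p^{l+1}+zI)^{-1}A_p^l\big)\,(F_p\otimes I_n).
$$
This is well defined for every $z$ with $0<|z|$ small enough that $-z$ is not an eigenvalue of any $A_i^{l+1}$, which excludes only finitely many values. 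By Corollary \ref{cor2-17}, $\ind_T(\tens{A})=\max_i\ind(A_i)$, so the hypothesis $l\ge k$ forces $l\ge\ind(A_i)$ for every $i$ at once.

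Next I would establish the matrix fact: if $B\in\mathbb{C}^{m\times m}$ has $\ind(B)=r$ and $l\ge r$, then $(B^{l+1}+zI)^{-1}B^l\to B^{D}$ as $z\to0$. Writing the core-nilpotent form $B=X\,\mathrm{diag}(C,N)\,X^{-1}$ with $C$ nonsingular and $N$ nilpotent, $N^{r}=0$, the hypothesis $l\ge r$ gives $N^l=N^{l+1}=0$, so $B^{l+1}+zI=X\,\mathrm{diag}(C^{l+1}+zI,\;zI)\,X^{-1}$ and $B^l=X\,\mathrm{diag}(C^l,0)\,X^{-1}$, whence
$$
(B^{l+1}+zI)^{-1}B^l=X\,\mathrm{diag}\!\big((C^{l+1}+zI)^{-1}C^l,\;0\big)X^{-1},
$$
and the right-hand side tends to $X\,\mathrm{diag}(C^{-1},0)X^{-1}=B^{D}$ as $z\to0$. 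Applying this with $B=A_i$ for each $i$, the block-diagonal matrix from the first paragraph tends to $\mathrm{diag}(A_1^D,\dots,A_p^D)$, hence the whole expression tends to $(F_p^H\otimes I_n)\,\mathrm{diag}(A_1^D,\dots,A_p^D)\,(F_p\otimes I_n)=\bcirc(\tens{A}^D)$ by the construction of the T-Drazin inverse in Theorem \ref{the2-20}. Since $\bcirc^{-1}$ is linear, hence continuous, the limit passes through it and yields $\tens{A}^D=\lim_{z\to0}(\tens{A}^{l+1}+z\tens{I})^{-1}*\tens{A}^l$ for every $l\ge k$.

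For the second formula I would rerun the computation with $\tens{A}^l$ replaced by $\tens{C}_{\tens{A}}^{(l)}=\tens{A}^{l+1}*\tens{A}^D$. Its $i$-th Fourier block is $A_i^{l+1}A_i^D=X_i\,\mathrm{diag}(C_i^l,\;0)X_i^{-1}$, where $A_i=X_i\,\mathrm{diag}(C_i,N_i)X_i^{-1}$ is the core-nilpotent form of $A_i$; the nilpotent part is annihilated by $A_i^D$, so it drops out without any power of $N_i$ having to vanish. Consequently $(A_i^{l+1}+zI)^{-1}$ applied to this block equals $X_i\,\mathrm{diag}\!\big((C_i^{l+1}+zI)^{-1}C_i^l,\;0\big)X_i^{-1}$, which tends to $X_i\,\mathrm{diag}(C_i^{-1},0)X_i^{-1}=A_i^D$ as $z\to0$. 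The same reassembly through the Fourier factors and $\bcirc^{-1}$ then gives the second identity, and because no power of $N_i$ had to be killed, $l\ge0$ now suffices.

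The argument is essentially bookkeeping; the only points needing care are that $\tens{A}^{l+1}+z\tens{I}$ fails to be invertible for only finitely many $z$, so the limit is genuinely taken along a punctured neighbourhood of $0$, and that $\bcirc$, $\bcirc^{-1}$, the fixed Fourier conjugation, and each block similarity $X_i(\cdot)X_i^{-1}$ are continuous, so the limit commutes with all of them. Corollary \ref{cor2-17} (the T-index as the maximum of the block indices) and Theorem \ref{the2-20} (the blockwise form of $\tens{A}^D$) supply exactly the structural input, so there is no genuinely hard step; the mild subtlety is just to keep the core-nilpotent splitting of each Fourier block straight throughout.
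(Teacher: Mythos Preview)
Your argument is correct and follows essentially the same route as the paper: reduce via the Fourier block-diagonalization (equivalently, the T-Jordan decomposition) to the core--nilpotent form of each block $A_i$, observe that for a nonsingular core $(C_i^{l+1}+zI)^{-1}C_i^l\to C_i^{-1}$, and reassemble. The paper's proof is a two-line sketch invoking exactly this mechanism; your version simply fills in the details it leaves implicit, including the continuity bookkeeping and the observation that in the second formula $A_i^D$ kills the nilpotent block outright so no lower bound on $l$ is needed.
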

\begin{proof}
For non-singular tensor $\tens{C}$, we have
$$
\lim_{z\rightarrow 0} (\tens{C}^{l+1}+z\tens{I})^{-1}*\tens{C}^l=\tens{C}^{-1},
$$
combining with $\tens{C}_{\tens{A}}^{(l)}=\tens{A}^{l+1}*\tens{A}^D=\tens{A}^l$ for $l\geq k$ and the T-Jordan decomposition, we will come to the result.
\end{proof}
\begin{corollary}\label{cor2-26}
Let $\tens{A}\in\mathbb{C}^{n\times n\times p}$ be a complex tensor, we have
$$
\tens{A}^D=\lim_{z\rightarrow 0} (\tens{A}^{n+1}+z\tens{I})^{-1}*\tens{A}^n
$$
\end{corollary}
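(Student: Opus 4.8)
The plan is to derive this corollary directly from Theorem \ref{the2-26}, whose second-to-last display gives $\tens{A}^D = \lim_{z\to 0}(\tens{A}^{l+1}+z\tens{I})^{-1}*\tens{A}^l$ for every integer $l \geq k$, where $k = \ind_T(\tens{A})$. So the only thing that needs to be checked is that the choice $l = n$ is admissible, i.e. that $n \geq \ind_T(\tens{A})$.

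First I would invoke Corollary \ref{cor2-17}: if $\bcirc(\tens{A})$ is block-diagonalized by the Fourier transform into diagonal blocks $A_1,\dots,A_p \in \mathbb{C}^{n\times n}$, then $\ind_T(\tens{A}) = \max_{1\leq i\leq p}\{\ind(A_i)\}$. The key elementary fact is that the index of an $n\times n$ matrix never exceeds $n$: the index equals the size of the largest Jordan block of $A_i$ attached to the eigenvalue $0$, and every Jordan block of an $n\times n$ matrix has size at most $n$. Hence $\ind(A_i)\leq n$ for each $i$, and therefore $k = \ind_T(\tens{A}) \leq n$.

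Then I would simply specialize Theorem \ref{the2-26} to $l = n$, which satisfies $l \geq k$, and conclude
$$
\tens{A}^D = \lim_{z\to 0}(\tens{A}^{n+1}+z\tens{I})^{-1}*\tens{A}^n.
$$
There is no real obstacle here beyond making the bound $\ind_T(\tens{A})\leq n$ explicit; the substantive limiting argument has already been carried out in the proof of Theorem \ref{the2-26} via the T-Jordan decomposition and the scalar/matrix identity $\lim_{z\to 0}(\tens{C}^{l+1}+z\tens{I})^{-1}\tens{C}^l = \tens{C}^{-1}$ on the nonsingular core block together with annihilation of the nilpotent block for $l\geq k$. If one wished to be fully self-contained, the mildest point to watch is that $z$ must avoid the finitely many nonzero T-eigenvalues raised to the $(n+1)$st power so that $\tens{A}^{n+1}+z\tens{I}$ is genuinely invertible for $z$ near (but not equal to) $0$; this is automatic and does not affect the limit.
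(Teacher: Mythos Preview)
Your proposal is correct and matches the paper's own argument: the paper's proof is the single line ``It comes from the tensor T-index $\ind_T(\tens{A})=k\leq n$,'' which is exactly the bound you establish (via Corollary~\ref{cor2-17}) before specializing Theorem~\ref{the2-26} to $l=n$. Your version is simply a more explicit rendering of the same reasoning.
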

\begin{proof}
It comes from the tensor T-index $\ind_T(\tens{A})=k\leq n$.
\end{proof}
The T-index of a tensor $\tens{A}$ can also be obtained by a limited process. We need the following two lemmas.
\begin{lemma}\label{lem2-10} Let $\tens{A}\in\mathbb{C}^{n\times n\times p}$ be an irrevertible complex tensor. A positive integer $p$ satisfies $\ind_T(\tens{A}^p)=1$ if and only if $p\geq \ind_T(\tens{A})$. Equivalently, the smallest positive integer $l$ for which $\ind_T(\tens{A}^l)=1$ is the T-index of tensor $\tens{A}$.
\end{lemma}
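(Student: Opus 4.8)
The plan is to reduce the statement to a fact about a single matrix and its powers, using the dictionary between $\tens{A}$ and $\bcirc(\tens{A})$. Put $M=\bcirc(\tens{A})$. Since $\tens{A}$ is irreversible, $M$ is singular, and by Lemma \ref{lem2-3}(4) together with Definition \ref{def2-16} we have $\ind_T(\tens{A}^q)=\ind(\bcirc(\tens{A}^q))=\ind(M^q)$ for every positive integer $q$. Hence the lemma is equivalent to the purely matrix-theoretic claim: for a singular $M$ and every positive integer $q$, $\ind(M^q)=1$ if and only if $q\ge\ind(M)$.

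To prove that claim, I would put $M$ into core-nilpotent form, i.e. exhibit it as similar to a block-diagonal matrix $C\oplus N$ with $C$ nonsingular and $N$ nilpotent, whose nilpotency index equals $\ind(M)=:k$; singularity of $M$ forces $k\ge 1$. (Alternatively one may invoke Corollary \ref{cor2-17} and argue inside each Fourier block $A_i$, splitting off its invertible and nilpotent Jordan parts.) Then $M^q$ is similar to $C^q\oplus N^q$ with $C^q$ still nonsingular, so $\ind(M^q)$ equals the nilpotency index of $N^q$. The key elementary fact is that, since $N^k=O$ and $N^{k-1}\ne O$, the power $(N^q)^m=N^{qm}$ vanishes exactly when $qm\ge k$; thus the nilpotency index of $N^q$ is $\lceil k/q\rceil$. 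Consequently $\ind(M^q)=\lceil\ind(M)/q\rceil$, that is $\ind_T(\tens{A}^q)=\lceil\ind_T(\tens{A})/q\rceil$.

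From this identity both assertions follow at once. Writing $k=\ind_T(\tens{A})\ge 1$, we have $\lceil k/q\rceil=1$ if and only if $0<k\le q$, i.e. $q\ge k$, which is the stated "if and only if". Moreover $q\mapsto\lceil k/q\rceil$ is nonincreasing and first attains the value $1$ at $q=k$, so the smallest positive integer $l$ with $\ind_T(\tens{A}^l)=1$ is precisely $\ind_T(\tens{A})$, giving the "equivalently" clause.

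The only step that is not pure bookkeeping is the observation that raising a nilpotent matrix of nilpotency index $k$ to the $q$-th power produces one of nilpotency index $\lceil k/q\rceil$; I expect to spend a sentence on it, though it is standard. Everything else rests on facts already established in the excerpt: that $\bcirc$ is a multiplicative, index-preserving correspondence and that $\bcirc(\tens{A})$ is block-diagonalizable by the Fourier transform.
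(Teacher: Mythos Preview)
Your argument is correct. The paper actually states Lemma~\ref{lem2-10} without proof, so there is no ``paper's own proof'' to compare against; your reduction to $M=\bcirc(\tens{A})$ via Lemma~\ref{lem2-3}(4) and Definition~\ref{def2-16}, followed by the core-nilpotent splitting and the identity $\ind(M^q)=\lceil \ind(M)/q\rceil$, is exactly the kind of routine verification the authors presumably had in mind and chose to omit. The nilpotency-index computation for $N^q$ is the only nontrivial step and you have it right: $(N^q)^m=0$ iff $qm\ge k$, hence the index of $N^q$ is $\lceil k/q\rceil$, which equals $1$ precisely when $q\ge k$.
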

\begin{lemma}\label{lem2-11} Let $\tens{N}\in\mathbb{C}^{n\times n\times p}$ be a nilpotent tensor with T-index $\ind_T(\tens{N})=k$. Suppose $m$ and $q$ be non-negative numbers, the limit
$$
\lim_{z\rightarrow 0} z^m(\tens{N}+z\tens{I})^{-1}*\tens{N}^q
$$
exists if and only if $m+q\geq k$. When the limit exists, its value is given by
$$
\lim_{z\rightarrow 0} z^m(\tens{N}+z\tens{I})^{-1}*\tens{N}^q=\begin{cases}
(-1)^{m+1}\tens{N}^{m+q-1},\quad &m>0,\\
\tens{O}, \quad &m=0.
\end{cases}
$$
\end{lemma}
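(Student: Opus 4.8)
The plan is to collapse everything to a single finite resolvent expansion for $\tens{N}$ and then read off the conclusion term by term. First I would record the two consequences of $\ind_T(\tens{N})=k$ that I need: by Corollary~\ref{cor2-14} (equivalently Theorem~\ref{the2-16}) the T-minimal polynomial of a nilpotent tensor of T-index $k$ is $x^k$, so $\tens{N}^k=\tens{O}$ while $\tens{N}^{k-1}\neq\tens{O}$; and since $\bcirc(\tens{N})$ is then a nilpotent matrix, its only eigenvalue is $0$, so $\bcirc(\tens{N})+zI$ — hence $\tens{N}+z\tens{I}$ — is invertible for every $z\neq 0$, which is exactly the punctured neighbourhood over which the limit is taken. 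I take $m,q$ to be non-negative integers.

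Next I would verify the identity
$$
(\tens{N}+z\tens{I})^{-1}=\sum_{j=0}^{k-1}(-1)^j z^{-j-1}\tens{N}^j,\qquad z\neq 0,
$$
by T-multiplying the right-hand side by $\tens{N}+z\tens{I}$ and using $\tens{N}^k=\tens{O}$ to collapse the telescoping sum to $\tens{I}$; this is legitimate because the right-hand side is a polynomial in $\tens{N}$, and T-products of such polynomials obey the matrix algebra rules by Lemma~\ref{lem2-5}(4) (or one may apply $\bcirc$ and invoke Lemma~\ref{lem2-3}). Multiplying on the right by $\tens{N}^q$ and by $z^m$ then gives
$$
z^m(\tens{N}+z\tens{I})^{-1}*\tens{N}^q=\sum_{j=0}^{k-1}(-1)^j z^{\,m-j-1}\,\tens{N}^{\,j+q},
$$
a Laurent polynomial in $z$ with tensor coefficients. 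Since the exponents $m-j-1$ are pairwise distinct there is no cancellation among the terms, so this expression has a limit as $z\to 0$ if and only if every term carrying a negative power of $z$ has vanishing tensor coefficient; the dominant negative power otherwise blows up in norm because $\tens{N}^{k-1}\neq\tens{O}$.

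The coefficient $(-1)^j\tens{N}^{j+q}$ is nonzero exactly when $j+q\le k-1$, and the exponent $m-j-1$ is negative exactly when $j\ge m$; hence a surviving negative-power term exists if and only if there is an integer $j$ with $m\le j\le k-1-q$, i.e. if and only if $m+q\le k-1$. This yields: the limit exists iff $m+q\ge k$. To evaluate it when it exists, I would isolate the $z^0$ term, which is the one with $j=m-1$. If $m\ge 1$ and $m-1\le k-1$ it contributes $(-1)^{m-1}\tens{N}^{m+q-1}=(-1)^{m+1}\tens{N}^{m+q-1}$; if $m>k$ there is no $j=m-1$ term in the sum, but then $m+q-1\ge k$ forces $\tens{N}^{m+q-1}=\tens{O}$, so the stated formula still holds; all remaining terms have strictly positive exponent and vanish, giving the value $(-1)^{m+1}\tens{N}^{m+q-1}$ for $m>0$. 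For $m=0$ every exponent $-j-1$ is negative, so existence of the limit forces $\tens{N}^{j+q}=\tens{O}$ for all $0\le j\le k-1$ (in particular $q\ge k$), and the limit is $\tens{O}$, as claimed. I expect the only delicate point to be the bookkeeping in the existence criterion — arguing cleanly that one surviving negative power with its nonzero coefficient cannot be cancelled — together with checking the boundary cases $m=0$ and $m>k$ so that the single closed-form expression $(-1)^{m+1}\tens{N}^{m+q-1}$ (respectively $\tens{O}$) is correct throughout.
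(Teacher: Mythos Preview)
Your proposal is correct and follows essentially the same approach as the paper: both arguments rest on the finite resolvent (Laurent) expansion $(\tens{N}+z\tens{I})^{-1}=\sum_{j=0}^{k-1}(-1)^j z^{-j-1}\tens{N}^j$ and then read off existence and value of the limit term by term. Your treatment is in fact more careful than the paper's on the bookkeeping --- you explicitly justify non-cancellation among distinct powers of $z$ and check the boundary cases $m=0$ and $m>k$, whereas the paper handles $\tens{N}=\tens{O}$ separately and is terser on the converse direction.
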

\begin{proof}
If $\tens{N}=\tens{O}$, then $\ind_T(\tens{N})=1$. The limited process degenerate to
$$
\lim_{z\rightarrow 0} z^{m-1}\tens{O}^q=\begin{cases}
\lim\limits_{z\rightarrow 0} z^{m-1}\tens{I},\quad &q=0,\\
0,\quad &q\geq 1.
\end{cases}
$$
The limit exists if and only if $q\geq 1$ or $m\geq 1$, which is equivalent to $m+q\geq 1$.\par
If $\tens{N}\neq \tens{O}$, we have the Laurant expansion
$$
(\tens{N}+z\tens{I})^{-1}=\sum_{i=0}^{k-1}(-1)^i\frac{\tens{N}^i}{z^{i+1}}.
$$
Then it comes to be
$$
\begin{aligned}
z^m(\tens{N}+z\tens{I})^{-1}*\tens{N}^q=&z^{m-1}\tens{N}^q-z^{m-2}\tens{N}^{q+1}+\cdots+(-1)^{m-2}z\tens{N}^{m+q-2}\\
&+(-1)^{m-1}z\tens{N}^{m+q-1}+\frac{(-1)^{m}\tens{N}^{m+q}}{z}+\cdots\\
&+\frac{(-1)^{k-1}\tens{N}^{q+k-1}}{z^{k-m}}.
\end{aligned}
$$
If $m+q\geq k$, then the limit exists. Conversely, if the limit exists, then from $\tens{N}^{m+q}=\tens{O}$ we have $m+q\geq k$.
\end{proof}
By the above two lemmas, we have the following theorem.
\begin{theorem}\label{the2-27} Let $\tens{A}\in\mathbb{C}^{n\times n\times p}$ be a complex tensor with T-index $\ind_T(\tens{A})=k$. For non-negative integers $m$ and $q$, the limit
$$
\lim_{z\rightarrow 0}z^m(\tens{A}+z\tens{I})^{-1}*\tens{A}^q
$$
exists if and only if $m+q\geq k$, in which case the limit is given by
\begin{equation}
\lim_{z\rightarrow 0}z^m(\tens{A}+z\tens{I})^{-1}*\tens{A}^q=\begin{cases}
(-1)^{m+1}(\tens{I}-\tens{A}*\tens{A}^D)*\tens{A}^{m+q-1}, \quad &m>0,\\
\tens{A}^D*\tens{A}^q, \quad &m=0.
\end{cases}
\end{equation}
\end{theorem}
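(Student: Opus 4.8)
The plan is to push everything down to the matrix (indeed block) level through the T-Jordan canonical form and the T-core-nilpotent decomposition, then split the limit into a \emph{core} part that converges by mere continuity of inversion and a \emph{nilpotent} part that is exactly governed by Lemma~\ref{lem2-11}.

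First I would invoke similarity invariance. Writing $\tens{A}=\tens{P}^{-1}*\tens{J}*\tens{P}$ as in Theorem~\ref{the2-1}, we have $(\tens{A}+z\tens{I})^{-1}=\tens{P}^{-1}*(\tens{J}+z\tens{I})^{-1}*\tens{P}$, $\tens{A}^q=\tens{P}^{-1}*\tens{J}^q*\tens{P}$, and $\tens{A}^D=\tens{P}^{-1}*\tens{J}^D*\tens{P}$ by Theorem~\ref{the2-20}; thus both the quantity in question and the claimed right-hand side are conjugated by $\tens{P}$, so it suffices to treat $\tens{A}=\tens{J}$. Next, using the T-core-nilpotent decomposition of Definition~\ref{def2-21}, write $\tens{A}=\tens{C}_{\tens{A}}+\tens{N}_{\tens{A}}$ with $\tens{C}_{\tens{A}}*\tens{N}_{\tens{A}}=\tens{N}_{\tens{A}}*\tens{C}_{\tens{A}}=\tens{O}$, and introduce the complementary idempotents $\tens{E}:=\tens{A}*\tens{A}^D$ and $\tens{F}:=\tens{I}-\tens{A}*\tens{A}^D=\tens{N}_{\tens{A}}^{(0)}$, so that $\tens{C}_{\tens{A}}=\tens{A}*\tens{E}$, $\tens{N}_{\tens{A}}=\tens{A}*\tens{F}$, both $\tens{E},\tens{F}$ commute with $\tens{A}$ (Drazin inverses commute with $\tens{A}$), and $\ind_T(\tens{N}_{\tens{A}})=k$ by Theorem~\ref{the2-25}.

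Then I would split $z^m(\tens{A}+z\tens{I})^{-1}*\tens{A}^q = z^m(\tens{A}+z\tens{I})^{-1}*\tens{E}*\tens{A}^q + z^m(\tens{A}+z\tens{I})^{-1}*\tens{F}*\tens{A}^q$. For the first summand, $(\tens{A}+z\tens{I})*\tens{E}=\tens{C}_{\tens{A}}+z\tens{E}$ is invertible on $\tens{R}(\tens{E})$ and $(\tens{A}+z\tens{I})^{-1}*\tens{E}\to\tens{A}^D$ as $z\to 0$ by continuity of inversion, so this summand $\to\tens{O}$ when $m>0$ and $\to\tens{A}^D*\tens{A}^q$ when $m=0$, and it always converges. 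For the second summand, $(\tens{A}+z\tens{I})*\tens{F}=\tens{N}_{\tens{A}}+z\tens{F}$, so $z^m(\tens{A}+z\tens{I})^{-1}*\tens{F}*\tens{A}^q=z^m(\tens{N}_{\tens{A}}+z\tens{F})^{-1}*\tens{N}_{\tens{A}}^q$ on $\tens{R}(\tens{F})$, and Lemma~\ref{lem2-11} applied to the nilpotent tensor $\tens{N}_{\tens{A}}$ (with $\tens{F}$ playing the role of the identity on its support) shows this converges iff $m+q\ge k$, with value $(-1)^{m+1}\tens{N}_{\tens{A}}^{m+q-1}$ for $m>0$ and $\tens{O}$ for $m=0$. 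Adding the two limits and using $\tens{N}_{\tens{A}}^{m+q-1}=\tens{F}*\tens{A}^{m+q-1}=(\tens{I}-\tens{A}*\tens{A}^D)*\tens{A}^{m+q-1}$ together with $\tens{A}^D*\tens{A}^q=\tens{A}^D*\tens{E}*\tens{A}^q$ produces the stated formula. An equivalent, fully elementary route avoids the abstract restriction: Fourier block-diagonalize $\bcirc(\tens{A})$ and partition each block as $J_i=\mathrm{diag}(C_i,N_i)$; continuity of matrix inversion handles the $C_i$ blocks, the finite Laurent expansion $(N_i+zI)^{-1}=\sum_{j=0}^{\ind(N_i)-1}(-1)^j z^{-j-1}N_i^j$ handles the $N_i$ blocks, and Corollary~\ref{cor2-17} identifies $k=\max_i\ind(N_i)$.

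The main obstacle is the \emph{only if} direction, i.e.\ proving that for $m+q<k$ the limit genuinely fails to exist rather than merely being uncontrolled by the above estimates. Here one must rule out any cancellation between the divergent nilpotent contribution and the bounded core contribution: since $\tens{E}$ and $\tens{F}$ project onto complementary T-range spaces (equivalently, in the block picture the core and nilpotent parts occupy disjoint coordinates), such cancellation is impossible, and the presence of at least one block $N_i$ with $\ind(N_i)=k$ forces a genuine pole of order $k-m-q>0$ in $z$ in that coordinate. This is exactly where the sharpness of Lemma~\ref{lem2-11} and Corollary~\ref{cor2-17} are needed. The remaining bookkeeping—the degenerate cases $k=0$ (invertible $\tens{A}$, so $m+q\ge 0$ always and $\tens{I}-\tens{A}*\tens{A}^D=\tens{O}$), $q=0$, and indices at the boundary $m+q=k$—is routine and follows by direct substitution.
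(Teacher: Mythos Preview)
Your proposal is correct and is essentially the argument the paper has in mind: the paper gives no written proof at all beyond the line ``By the above two lemmas,'' and your core--nilpotent split (Definition~\ref{def2-21}, Theorem~\ref{the2-25}) together with Lemma~\ref{lem2-11} is exactly how those pieces assemble. Two small remarks: (i) your preliminary similarity reduction to $\tens{A}=\tens{J}$ is harmless but unnecessary, since the projector identities $\tens{E}+\tens{F}=\tens{I}$, $(\tens{A}+z\tens{I})^{-1}*\tens{F}=(\tens{N}_{\tens{A}}+z\tens{I})^{-1}*\tens{F}$ and $\tens{F}*\tens{A}^q=\tens{N}_{\tens{A}}^q$ (for $q\ge 1$) already hold for arbitrary $\tens{A}$; (ii) Lemma~\ref{lem2-10} is not actually used in your argument, and indeed it is not needed---Lemma~\ref{lem2-11} alone controls the nilpotent summand while continuity of inversion handles the core summand.
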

\end{appendix}


\begin{thebibliography}{99}
{\small



\bibitem{Ben1}
A. Ben-Israel and T.N.E. Greville. Generalized Inverses Theory and Applications. Wiley, New York, 1974; 2nd edition, Springer, New York, 2003.


\bibitem{2Braman1}
K. Braman. Third-order tensors as linear operators on a space of matrices. Linear Algebra Appl. 433 (2010), 1241--1253.

\bibitem{2Brazell1}
M. Brazell, N. Li, C. Navasca, and C. Tamon. Solving multilinear systems via tensor inversion. SIAM J. Matrix Anal. Appl. 34 (2013), 542--570.


\bibitem{2Bu1}
C. Bu, X. Zhang, J. Zhou, W. Wang, and Y. Wei. The inverse, rank and product of tensors. Linear Algebra Appl. 446 (2014), 269--280.


\bibitem{2Campbell1}
S. L. Campbell and C. D. Meyer. Generalized Inverses of Linear Transformations.  Society for Industrial and Applied Mathematics (SIAM), Philadelphia, PA, 2009.

\bibitem{Chan1}
R. Chan and X. Jin. An Introduction to Iterative Toeplitz Solvers, SIAM, Philadelphia, 2007.

\bibitem{Chan2}
T. Chan, Y. Yang, and Y. Hsuan. Polar $n$-complex and $n$-bicomplex singular value decomposition and principal component pursuit. IEEE Trans. Signal Process. 64 (2016), 6533--6544.

\bibitem{Davis1}
P. J. Davis. Circulant Matrices. Wiley, New York, NY, 1979.

\bibitem{2Drazin1}
M. P. Drazin. Pseudo-inverses in associative rings and semigroups. Amer. Math. Monthly 65 (1958), 506--514.


\bibitem{Gleich1}
D. F. Gleich, G. Chen, and J. M. Varah. The power and Arnoldi methods in an algebra of circulants.  Numer. Linear Algebra Appl. 20 (2013), 809--831.

\bibitem{Golub1}
G. H. Golub and C. F. Van Loan. Matrix Computations, 4th edition, Johns Hopkins University Press, Baltimore, MD, 2013.

\bibitem{Hao1}
N. Hao, M. E. Kilmer, K. Braman, and R. C. Hoover. Facial recognition using tensor-tensor decompositions. SIAM J. Imaging Sci. 6 (2013), 437--463.


\bibitem{Higham1}
N. J. Higham. Functions of Matrices: Theory and Computation. SIAM, Philadelphia, 2008.


\bibitem{Horn1}
A. R. Horn, C. R. Johnson. Matrix Analysis. Cambridge University Press, Cambridge, Second edition, 2013.

\bibitem{Horn2}
A. R. Horn, C. R. Johnson. Topics in Matrix Analysis. Corrected reprint of the 1991 original. Cambridge University Press, Cambridge, 1994.

\bibitem{Hu2}
W. Hu, Y. Yang, W. Zhang, and Y. Xie. Moving object detection using tensor-based low-rank and saliently fused-sparse decomposition. IEEE Trans. Image Process. 26 (2017), 724--737.

\bibitem{2Ji1}
J. Ji and Y. Wei. The Drazin inverse of an even-order tensor and its application to singular tensor equations. Comput. Math. Appl. 75 (2018), 3402--3413.


\bibitem{2Jin1}
H. Jin, M. Bai, J. Bentez, and X. Liu. The generalized inverses of tensors and an application to linear models, Comput. Math. Appl. 74 (2017),  385--397.

\bibitem{Jin1}
X. Jin. Developments and Applications of Block Toeplitz Iterative Solvers, Science Press, Beijing and Kluwer Academic Publishers, Dordrecht, 2002.

\bibitem{2Kernfeld}
E. Kernfeld, M. Kilmer, and S. Aeron. Tensor-tensor products with invertible linear transforms. Linear Algebra Appl. 485 (2015), 545--570.

\bibitem{Kilmer1}
M. E. Kilmer, K. Braman, N. Hao, and R. C. Hoover. Third-order tensors as operators on matrices: a theoretical and computational framework with applications in imaging. SIAM J. Matrix Anal. Appl. 34 (2013), 148--172.

\bibitem{Kilmer2}
M. E. Kilmer and C. D. Martin. Factorization strategies for third-order tensors. Linear Algebra Appl. 435 (2011), 641--658.




\bibitem{Kong2}
H. Kong,  X. Xie, and Z. Lin. $t$-Schatten-$p$ norm for low-rank Tensor recovery. IEEE Journal of Selected Topics in Signal Processing. 12 (2018), 1405--1419.


\bibitem{Liu1}
Y. Liu,  L. Chen, and  C. Zhu.  Improved robust tensor principal component analysis via low-rank core matrix. IEEE Journal of Selected Topics in Signal Processing. 12 (2018), 1378--1389.



\bibitem{Long1}
Z. Long, Y Liu, L. Chen et al. Low rank tensor completion for multiway visual data. Signal Processing. 155 (2019), 301--316.

\bibitem{Lund1}
K. Lund. The tensor $t$-function: a definition for functions of third-order tensors. ArXiv preprint, arXiv:1806.07261, 2018.

\bibitem{2Luo1}
Z. Luo, L. Qi, and Ph. L. Toint. Bernstein concentration inequalities for tensors via Einstein products. Arxiv preprint, arXiv:1902.03056, 2019.

\bibitem{Ma}
H. Ma, N. Li, P. Stanimirovi\'c, and V. Katsikis. Perturbation theory for Moore-Penrose inverse of tensor via Einstein product. Comput. Appl. Math. 38 (2019), no. 3, Art. 111, 24 pp.



\bibitem{Martin1}
C. D. Martin, R. Shafer, and B. Larue. An order-$p$ tensor factorization with applications in imaging. SIAM J. Sci. Comput. 35 (2013), A474--A490.

\bibitem{2Miao1}
Y. Miao, L. Qi, and Y. Wei. Generalized tensor function via the tensor singular value decomposition based on the T-product. arXiv preprint arXiv:1901.04255v3 (2019).

\bibitem{2Newman}
E. Newman, L. Horesh, H. Avron, and M. Kilmer. Stable tensor neural networks for rapid deep learning. arXiv preprint arXiv:1811.06569 (2018).

\bibitem{Sahoo}
J. Sahoo, R. Behera, P. S. Stanimirovi\'c, V. N. Katsikis, and H. Ma.
Core and Core-EP inverses of tensors. Comput. Appl. Math. to appear; arXiv:1905.07874, 2019.


\bibitem{Predragouter}
P. S. Stanimirovi\'c, M. \'Ciri\'c, V. N. Katsikis, C. Li, and
 H. Ma. {\em Outer and (b,c) inverses of tensors}. Linear Multilinear Algebra, 2018,
 DOI:
10.1080/03081087.2018.1521783.



\bibitem{Semerci1}
O. Semerci, N. Hao, M. E. Kilmer, and E. L. Miller. Tensor-based formulation and nuclear norm regularization for multienergy computed tomography. IEEE Trans. Image Process. 23 (2014), 1678--1693.

\bibitem{Soltani1}
S. Soltani, M. E. Kilmer, and P. C. Hansen. A tensor-based dictionary learning approach to tomo-graphic image reconstruction. BIT Numerical Mathematics. 56 (2016), 1425--1454.

\bibitem{2Sun1}L. Sun, B. Zheng, C. Bu, and Y. Wei. Moore-Penrose inverse of tensors via Einstein product. Linear Multilinear Algebra. 64 (2016) 686--698.

\bibitem{Tarzanagh1}
D. A. Tarzanagh and G. Michailidis. Fast randomized algorithms for t-product based tensor operations and decompositions with applications to imaging data. SIAM J. Imag. Science. 11 (2018), 2629--2664.

\bibitem{Wang1}
A. Wang, Z. Lai, and Z. Jin. Noisy low-tubal-rank tensor completion. Neurocomputing. 330 (2019), 267--279.


\bibitem{2Wang1}
G. Wang, Y. Wei, and S. Qiao, Generalized Inverses: Theory and Computations, Developments in Mathematics 53. Singapore: Springer; Beijing: Science Press, 2018.


\bibitem{2Wei1}
Y. Wei, P. Stanimirovi\'c and M. Petkovi\'c,
 Numerical and Symbolic Computations of Generalized Inverses,
 Hackensack, NJ: World Scientific, 2018.











}
\end{thebibliography}
\end{document}